\newtheorem{thm}{Theorem}[section]
\newtheorem{cor}[thm]{Corollary}
\newtheorem{ass}[thm]{Assumption}
\newtheorem{lem}[thm]{Lemma}
\theoremstyle{remark}
\newtheorem{rem}[thm]{Remark}
\newcommand{\e}{\textup{e}}
\newcommand{\EV}[1]{\mathbb{E} \left[ #1 \right]}
\newcommand{\LD}{\mathcal{L}^2(\mathcal{D})}
\newcommand{\dint}{\text{d}}
\newcommand{\inprod}[2]{\left\langle #1, #2 \right\rangle}
\newcommand{\norm}[1]{\left\| #1 \right\|}
\newcommand{\abs}[1]{\left| #1 \right|}
\pgfplotsset{compat=1.18}
\begin{document}

\title[Spectral approximation of stochastic evolution equations]{Spectral approximation of a class of stochastic time-fractional evolution equations}
\author{S. Knutsen Furset $^1$}
\thanks{$^1$ Department of Mathematical Sciences, NTNU, Trondheim, Norway}
\thanks{\ding{41} S. Knutsen Furset: simen.k.furset@ntnu.no}

\subjclass[2020]{60H15, 65C30}
\keywords{Stochastic partial differential equations, Spectral method, Strong convergence, Error estimates}

\begin{abstract} A method for numerical approximation of a class of fractional parabolic stochastic evolution equations is introduced and analysed. This class of equations has recently been proposed as a space-time extension of the SPDE-method in spatial statistics. A truncation of the spectral basis function expansion is used to discretise in space, and then a quadrature is used to approximate the temporal evolution of each basis coefficient. Strong error bounds are proved both for the spectral and temporal approximations. The method is tested and the results are verified by several numerical experiments.
\end{abstract}

\maketitle 

\section{Introduction}
\label{sec:introduction} \subsection*{Motivation} We introduce a spectral method for approximating a class of stochastic fractional parabolic evolution equation
\begin{equation} \label{eq:fractional-time-heat}
\begin{cases}
    \left(\partial_t + A \right)^\gamma X(t) = \dot{W}_Q(t) \quad \text{for} \quad t > 0 \\
    X(t) = 0 \quad \text{for} \quad t \leq 0
\end{cases} \, ,
\end{equation}
where {$A$ is an unbounded linear operator such that} $-A$ generates a $C_0$-semigroup, {$Q$ is a bounded linear operator}, $W_Q$ is a (generalised) $Q$-Wiener process, $t \in [0,T]$, and $\gamma > \frac{1}{2}$. Our method consists of truncating an eigenfunction expansion of $\{X(t)\}_t$ and computing the coefficients by quadrature. The quadrature in time allows us to achieve an improvement in computational complexity compared to direct simulation using Cholesky decompositions. We are primarily interested in the case where $A$ and $Q^{-1}$ are elliptic operators defined by
\begin{equation} \label{eq:operators}
A = r^{-1}(\kappa^2 - \Delta)^{\alpha} \, , \quad Q = \sigma^2 r^{-2\gamma}(\kappa^2 - \Delta)^{-\beta} \, ,  
\end{equation}
where $\alpha > 0$, $\beta > 0$, $\sigma > 0$, $r > 0$, $\kappa > 0$ and $\Delta$ is the Laplacian {on $\mathcal{D}$}, equipped with zero Dirichlet boundary condition. (\ref{eq:fractional-time-heat}) with these operators has recently been suggested as a way to extend the so-called SPDE-method in spatial statistics to the case where the random fields are varying in time \cite{LindgrenBakka}. The SPDE-method, i.e., using stochastic partial differential equations (SPDE's) to construct spatial covariance models, is a popular approach in spatial statistics and has seen applications in a wide range of fields, see for example \cite{Bakka2018, Fuglstad2014, lindgren2011, Lindgren2022}. Motivating a statistical model through an SPDE creates a bridge between statistics and numerics by allowing the use of numerical methods developed for PDE's in the construction of covariance models. Compared to the classical method of specifying covariance functions directly, the SPDE-method allows easy generalisation to non-Euclidean spaces, for example the sphere. This is because covariance functions need to be non-negative definite, which is a difficult property to verify for functions on non-Euclidean spaces. However, covariance functions that derive from SPDE's automatically satisfy this requirement.

Solutions $\{X(t)\}_t$ to (\ref{eq:fractional-time-heat}) with the operators in (\ref{eq:operators}) have several properties that makes them useful for the modelling of spatio-temporal phenomena. First, it has been shown that $\{X(t)\}_t$ has spatial covariance operator that converges to a Whittle-Mátern operator as $t \rightarrow \infty$~\cite[Corollary 4.2]{kirchner2022}. This is of interest because Mátern fields are the most commonly used covariance models in spatial statistics; see for example \cite{Bakka2018, Cressie2011-it}. The alternative time-fractional SPDE $\partial_t^\gamma X(t) + A X(t) = \dot{W}_Q(t)$ has also been studied; see \cite{Bonaccorsi2009, Desch2011}, but solutions to this SPDE are not asymptotically Whittle-Mátern. Secondly, the parameters $\gamma$, $\alpha$, $\beta$, $r$, $\kappa$, and $\sigma$ are few in number, and they allow independent control over several important properties of the process $\{X(t)\}_t$, namely the spatial smoothness $\nu_s$, the temporal smoothness $\nu_t$, the spatial range $r_s$, and the temporal range $r_t$. The smoothness is here a measure of the regularity of the solutions, interpreted in the sense that if the solution has a smoothness of $\nu$, it has $n := \lceil \nu \rceil - 1$ mean-square derivatives, and the $n$-th derivative is {mean-square} $(\nu - n)$-Hölder continuous (up to an $\epsilon$ if $\nu$ is an integer) on $[0,T]$. The range is a measure of the rate at which correlation decays to zero. It is possible to parameterise the SPDE in terms of these properties, see \cite{LindgrenBakka} or Section \ref{sec:numerics} for details.

\subsection*{Related work} Kirchner \& Willems \cite{kirchner2022} derived rigorous well-posedness and regularity results for (\ref{eq:fractional-time-heat}) when considered on a domain $\mathcal{D} \subset \mathbb{R}^d$ with zero Dirichlet boundary condition. They showed that (\ref{eq:fractional-time-heat}) has a mild solution given by a stochastic convolution, see (\ref{eq:stochastic-convolution}) below. They also studied the covariance properties of (\ref{eq:fractional-time-heat}). A consequence of their main result~\cite[Theorem 3.12]{kirchner2022} is that if we use the operators $A$ and $Q$ in (\ref{eq:operators}), then $\{X(t)\}_t$ has $\mathcal{L}^2$-solutions when $\beta + (2\gamma - 1)\alpha - \frac{d}{2} > 0$ and the solution has spatial smoothness $\beta + (2\gamma - 1)\alpha - \frac{d}{2}$ and temporal smoothness $\gamma - \frac{1}{2} + \frac{1}{2\alpha}\min(\beta - \frac{d}{2}, 0)$. More recently, Kirchner \& Willems has also defined weak Markov properties for (\ref{eq:fractional-time-heat}) when $\gamma \in \mathbb{N}$ \cite{kirchner2023}.

Lindgren et. al. \cite{LindgrenBakka} used Fourier techniques to study the regularity properties of (\ref{eq:fractional-time-heat}) on $\mathbb{R}^d$. They introduced a weak finite element discretisation of (\ref{eq:fractional-time-heat}), however this method does not work for $\gamma \notin \mathbb{N}$. They suggested a reparametrisation for $\{X(t)\}_t$ in terms of $\nu_s$, $\nu_t$, $r_s$, $r_t$, and a ``non-separability'' parameter $\beta_s$. This demonstrates the above point that (\ref{eq:fractional-time-heat}) allows full control over the smoothness and range properties of $\{X(t)\}_t$ both in space and in time. The remaining parameter $\sigma$ is a scaling parameter for the marginal variance of the solution. We will use this parametrisation later in Section \ref{sec:numerics} when we conduct numerical experiments. 

The study of the numerical approximation of semi-linear SPDE's is already a developed field; see for example \cite{Jentzen2011, Lord2014}. An early contribution was the study of the $L^2$ and $\dot{H}^{-1}$ errors of finite element approximations of general linear stochastic evolution equations with additive noise by Yan \cite{Yan2005}. Kovács et. al. improved on these results by deriving both strong error estimates \cite{Kovcs2009}, and weak error estimates \cite{Kovcs2011} of a more general finite element approximation of semi-linear stochastic evolution equations with additive noise. Kovacs, Lang, \& Peterson studied approximations of the covariance operator of a linear SPDE using a finite element approximation in space and a rational approximation of the semi-group $\{S(t)\}_t$ in time \cite{Kovcs2022}. Spectral methods have also been considered in the literature. For example, Fahim, Hausenblas, \& Kovács studied the spatial discretisation errors for a finite element method and a spectral Galerkin method for general stochastic integral equations of convolution type with Gaussian noise \cite{Fahim2022}. Lang \& Schwab considered the strong error of a spectral decomposition of the stochastic heat equation on the sphere driven by additive, isotropic Wiener noise \cite{Lang2015}. Cohen \& Lang studied strong, weak, and almost sure convergence for the stochastic wave equation using a spectral method on the unit sphere \cite{Cohen2022}. Lang \& Motschan-Armen recently studied a stochastic heat equation on the sphere using a spectral method in space and Euler-Maruyama in time \cite{Lang2024}. There are many others who have also worked on similar problems, see for example \cite{Andersson2015, Angulo2007, Baas2022, Brhier2013, Brhier2022, Breit2024, Debussche1999, Debussche2011, Kelbert2005, Kirchner2017, Kovcs2020, Prohl2021}.

Non-stochastic versions of (\ref{eq:fractional-time-heat}) has also been considered in the literature. For example Nyström \& Sande studied the properties of the operator $\left(\partial_t - \Delta\right)^s$ for $0 < s < 1$ \cite{Nystrm2016} and Stinga \& Torrea derived regularity for solutions of the equation $\left(\partial_t - \Delta\right)^s u(t) = f(t)$ also for $0 < s < 1$ \cite{Stinga2017}. This problem has also been studied for by Biswas, Leon-Contreras, \& Stinga \cite{Biswas2021, Biswas2020}, and by Litsgård \& Nyström \cite{Litsgrd2022}. More recently Willems studied the problem under the more general assumption $0 < s < \infty$ \cite{Willems2024}.

\subsection*{Advantages and disadvantages of spectral approximations} We are using a spectral approximation in this work. Finite element methods are also common in SPDE literature. The advantage of using a spectral method on (\ref{eq:fractional-time-heat}) is that we can easily use any value of $\alpha$ and $\beta$ also non-integer, constrained only by the desire that $\{X(t)\}_t$ has $\mathcal{L}^2$-solutions. Since fractional operators are non-local they are more costly to discretise using finite-element methods, see for example \cite{Bolin2019, Bolin2018, Bolin2018_2, Bonito2019, Jansson2022}. In addition, spectral methods generally have very fast convergence when solutions are smooth; we see later that this also is the case for the SPDE in (\ref{eq:fractional-time-heat}). A drawback of the spectral approach is that we need the eigenfunctions of $A$ and $Q$ on the domain $\mathcal{D}$. In the case of the operators in (\ref{eq:operators}) the eigenfunctions are equal to the eigenfunctions of the Laplacian, which are known in some special cases, for example rectangles in $\mathbb{R}^d$ and the sphere $S^2$, both of which are important in applications. In the numerical section of the work we therefore restrict our attention to these two cases. However, in general one would need to numerically compute the eigenfunctions, which can be computationally expensive. This makes it difficult to extend the method to more complicated domains. It also makes it difficult to use more complicated operators, e.g., operators with spatially varying coefficients.

\subsection*{Contributions} To the authors knowledge, no one has yet considered methods for numerical approximation of (\ref{eq:fractional-time-heat}) for fractional $\gamma$. We therefore introduce a new numerical method for approximating (\ref{eq:fractional-time-heat}), based on discretising a Duhamel-type representation formula for the solutions derived in \cite{kirchner2022}. The representation formula is in the form of a stochastic convolution. Our method is spectral in space, we approximate $X(t)$ by truncating its eigenfunction expansion. The spectral approximation is standard, and the main contribution of this work is a quadrature method for approximating the resulting Itô integrals on each spectral basis function. The eigenfunctions and eigenvalues are assumed known. We prove strong error bounds for both the spectral and temporal discretisations and then verify these error estimates with numerical experiments. We also do simulations of $\{X(t)\}_t$ on the unit sphere, providing the first visualisation of the solutions of (\ref{eq:fractional-time-heat}) for fractional $\gamma$. This allows us to visually verify how the smoothness of the solutions depend on the parameters. Though we are primarily interested in the spatial operators in (\ref{eq:operators}), we achieve a more general theory with little extra effort by using slightly more general assumptions on $A$ and $Q$. These assumptions can be found in Assumption \ref{assumption:operator} below. One weakness of our approach is that the convergence rate of our temporal approximation goes to zero as $\gamma \rightarrow \frac{1}{2}$ due to a singularity in the Itô integrals. This is hard to avoid since quadrature methods with accumulating meshes are not readily applicable, see Remark \ref{remark:acc-mesh} for details.

\subsection*{Overview} In Section \ref{sec:spectral-approximation} we study the eigenfunction expansion of $X(t)$ and derive an Itô-integral describing the coefficient processes. In Section \ref{sec:discretisation} we describe our proposed discretisation schemes for space and time, and present results for the rate of convergence of these discretisations. Proofs of the results can be found in Section \ref{sec:proofs}. In Section \ref{sec:numerics} we verify the derived convergence rate with numerical examples.

\section{Spectral decomposition} \label{sec:spectral-approximation} 
\noindent {As mentioned above (\ref{eq:fractional-time-heat}) has mild solution given by the stochastic convolution}
\begin{equation} \label{eq:stochastic-convolution}
X(t) :=  \frac{1}{\Gamma(\gamma)} \int_0^t (t-s)^{\gamma - 1} S(t - s) \,\textup{d} W_Q(s) \, , \quad t \geq 0 \, ,
\end{equation} 
where $\{S(t)\}_t$ is the $C_0$-semigroup generated by {$-A$} \cite{kirchner2022}. This stochastic convolution is the basis for our numerical approximation. We make the following assumptions

\begin{ass} \label{assumption:operator} Let $A: \mathcal{L}^2(\mathcal{D}) \supset D(A) \rightarrow \mathcal{L}^2(\mathcal{D})$, {where $D(A)$ is the domain of $A$,} and $Q: \mathcal{L}^2(\mathcal{D}) \rightarrow \mathcal{L}^2(\mathcal{D})$. We assume the following:

\begin{enumerate}[(i)]
  \item $A$ and $Q$ are {linear} operators with a joint orthonormal basis of eigenfunctions $\{e_k\}_{k = 1}^{\infty}$, spanning $\mathcal{L}^2(\mathcal{D})$, with corresponding {real} eigenvalues $\{\mu_k\}_{k = 1}^{\infty}$ and $\{\lambda_k\}_{k = 1}^{\infty}$ respectively. \smallskip
  \item There exists an $r_{\mu} > 0$ and a $C_1 > 0$ such that $\{\mu_k\}_{k = 1}^\infty$ satisfies 
  \begin{equation*}
    \mu_k \geq C_1 k^{r_{\mu}}, 
  \end{equation*}
  where the eigenvalues $\{\mu_k\}_{k = 1}^{\infty}$ are indexed so that they are non-decreasing in $k$. \smallskip
  \item There exists an $r_{\lambda} < 0$ and a $C_2 > 0$ such that $\{\lambda_k\}_{k = 1}^\infty$ satisfies
  \begin{equation*}
      0 < \lambda_k \leq C_2 k^{r_{\lambda}} .
  \end{equation*}
  where the eigenvalues $\{\lambda_k\}_{k = 1}^{\infty}$ are indexed so that they are non-increasing in $k$.
\end{enumerate} 
\end{ass}
\noindent Under Assumption \ref{assumption:operator} it follows from~\cite[Theorem 3.12]{kirchner2022} that if $\gamma > \frac{1}{2}$, we have $\mathcal{L}^2(\mathcal{D})$-existence of (\ref{eq:fractional-time-heat}) if $1 + r_\lambda - r_\mu(2 \gamma - 1) < 0$. Additionally, $-A$ generates a $C_0$-semigroup $S(t) = \e^{-tA}$ on $\mathcal{L}^2(\mathcal{D})$ defined by $S(t) e_k := \e^{ - \mu_k t} e_k$~\cite[Section 6.1]{lectureNoteKovacsLarsson}. We denote
$$\inprod{u}{v} := \int_{\mathcal{D}} u v \, dx \, , \quad \norm{u}^2 := \inprod{u}{u} \, .$$
Since the eigenfunctions $\{e_k\}_{k = 1}^\infty$ span $\mathcal{L}^2(\mathcal{D})$, and since $X(t)$ is almost surely in $\mathcal{L}^2(\mathcal{D})$, we can for each $t \in [0,T]$ expand the solution $X(t)$ of (\ref{eq:stochastic-convolution}) in terms of the eigenfunctions of $A$, so that
\begin{equation} \label{eq:eigenfunction-expansion}
X(t) := \sum_{k = 1}^{\infty} c_k(t) e_k \, ,   
\end{equation}
almost surely in $\norm{\cdot}_{\mathcal{L}^2(\mathcal{D})}$, where, for each $k$, $c_k$ is a $[0,T]$-indexed stochastic processes on $\mathbb{R}$. In the following lemma we apply (\ref{eq:stochastic-convolution}) to calculate $c_k(t)$.

\begin{lem} \label{lemma:coefficient-representation} Assume that Assumption \ref{assumption:operator} holds and that $\{X(t)\}_t$ satisfies (\ref{eq:stochastic-convolution}). Define $\omega_k(t) := \frac{1}{\sqrt{\lambda_k}} \inprod{W_Q(t)}{e_k}$. Then {for all $t \in [0,T]$ and} for all $k = 1, 2, 3, ...$,
\begin{equation} \label{eq:coefficient-integral}
c_k(t) = \frac{\sqrt{\lambda_k}}{\Gamma(\gamma)} \int_0^t \e^{-\mu_k (t-s)} (t-s)^{\gamma - 1} \text{d} \omega_k(s) \, , 
\end{equation}
almost surely, where $c_k$ is as defined in (\ref{eq:eigenfunction-expansion}). {Furthermore, the random processes $\{c_k\}_{k = 1}^\infty$ are pairwise independent $\mathbb{R}$-valued, standard Wiener-processes.}
\end{lem}

\begin{proof} {By (\ref{eq:stochastic-convolution}) we have that 
\begin{align*}
    c_k(t) & = \sum_{j = 1}^{\infty} c_j(t) \inprod{e_j}{e_k} = \inprod{\sum_{j = 1}^{\infty} c_j(t) e_j}{e_k} = \inprod{X(t)}{e_k} \\
    & = \inprod{\frac{1}{\Gamma(\gamma)} \int_0^t (t-s)^{\gamma - 1} S(t - s) \textup{d} W_Q(s)}{e_k} \\
    & = \frac{1}{\Gamma(\gamma)} \int_0^t (t-s)^{\gamma - 1} \inprod{\textup{d} W_Q(s)}{S(t - s) e_k} \\
    & = \frac{1}{\Gamma(\gamma)} \int_0^t \e^{-\mu_k (t-s)} (t-s)^{\gamma - 1} \inprod{\textup{d} W_Q(s)}{e_k} \\
    & = \frac{\sqrt{\lambda_k}}{\Gamma(\gamma)} \int_0^t \e^{-\mu_k (t-s)} (t-s)^{\gamma - 1} \text{d} \omega_k(s) \, ,
\end{align*}
where the fifth step is justified by~\cite[Proposition 4.30]{daprato2014}. Furthermore, by definition $\{\omega_k(t)\}_{k = 1}^{\infty}$ are $\mathbb{R}$-valued Wiener-processes. The $\{\omega_k(t)\}_{k = 1}^{\infty}$'s are also independent, standard Wiener-processes since
\begin{align*}
    \EV{\omega_k(t) \omega_j(s)} & = \frac{1}{\sqrt{\lambda_k \lambda_j}} \EV{\inprod{W_Q(t)}{e_k} \inprod{W_Q(s)}{e_j}} \\ 
    & = \frac{1}{\sqrt{\lambda_k \lambda_j}} \inprod{\min(t,s)Q^\frac{1}{2}e_k}{Q^{\frac{1}{2}}e_j} \\ 
    & = \min(t,s) \delta_{kj} \, .\qedhere
\end{align*}}
\end{proof}

\section{Discretisation of the stochastic evolution equation \eqref{eq:fractional-time-heat}} \label{sec:discretisation}

In this section we discuss how to discretise problem \eqref{eq:fractional-time-heat}
in Section \ref{sec:heurestic}.  In Section \ref{sec:spectral-convergence} we derive a bound on the spectral approximation error, and in Section \ref{sec:method1} we derive an error bound for our temporal scheme. In Section \ref{sec:main-result} we combine these results to get an error bound for the full discretisation and in Section \ref{sec:complexity} we discuss the complexity of the method.

\subsection{The numerical method
} \label{sec:heurestic}
We truncate 
the spectral expansion (\ref{eq:eigenfunction-expansion}) of the solution $X(t)$ of the SPDE \eqref{eq:fractional-time-heat} given by the stochastic convolution formula \eqref{eq:stochastic-convolution},
\begin{equation} \label{eq:eigenfunction-truncation}
\Tilde{X}_M(t) := \sum_{k = 1}^{M} c_k(t) e_k \, .
\end{equation}
We then approximate the coefficient processes $c_k(t)$ by replacing $(t-s)^{\gamma - 1}$ in (\ref{eq:coefficient-integral}) by a piecewise polynomial $p_t(s)$ (of order $m$) on {a} discretisation grid {$0 = t_0 < t_1 < .... < t_N = T$, with $h := \max_{\ell}(t_\ell - t_{\ell - 1})$}, i.e.
\begin{equation*} \label{eq:coefficient-pol-approximation} \Tilde{c}_k^{(m)}(t) := \frac{\sqrt{\lambda_k}}{\Gamma(\gamma)} \int_0^t \e^{-\mu_k (t-s)} p_t(s) \text{d} \omega_k(s) \, .
\end{equation*}
The full discretisation of the solution $X(t)$ at $t_n$ in the discretisation grid is then:
\begin{align}
    \Tilde{X}^{h}_M(t_n) &:= \sum_{k = 1}^{M} \Tilde{c}^{(m)}_k(t_n) e_k \nonumber \\
    & = \sum_{k = 1}^{M} \frac{\sqrt{\lambda_k}}{\Gamma(\gamma)} \left( \int_0^{t_n} \e^{-\mu_k (t_n-s)} p_{t_n}(s) \text{d} \omega_k(s) \right) e_k \nonumber \\ 
    & = \sum_{k = 1}^{M} \sum_{\ell = 1}^n \frac{\sqrt{\lambda_k}}{\Gamma(\gamma)} \left( \int_{{t_{\ell - 1}}}^{t_\ell} \e^{-\mu_k (t_n-s)} p_{t_n}(s) \text{d} \omega_k(s) \right) e_k \nonumber \\ 
    & = \frac{1}{\Gamma(\gamma)} \sum_{k = 1}^M \sum_{\ell = 1}^n \sum_{j = 0}^m \sqrt{\lambda_k} \e^{-\mu_k (t_n-t_\ell)} b_{n, \ell, j} t_\ell^j {w_{\ell,j,k}} e_k  \, , \label{eq:full-disc-sum}
\end{align}
where $\sum_{j = 0}^m b_{n, \ell, j} s^j := {p_{t_n}(s)} \big|_{s \in [t_{\ell - 1}, t_\ell)}$, and
\begin{equation} \label{eq:stochastic-innovation-integral}
  {w_{\ell,j,k}} := \int_{t_{\ell - 1}}^{t_\ell} \left( \frac{s}{t_\ell} \right)^j \e^{-\mu_k (t_\ell-s)} \text{d} \omega_k(s) \, .
\end{equation}
{For fixed $\ell$ and $k$, $(w_{\ell,0,k}, w_{\ell,1,k}, ..., w_{\ell,m,k})$ is a multivariate Gaussian random variable with mean vector $0$ and covariance matrix $\Sigma$ defined by}
$$\Sigma_{i,j} := \int_{t_{\ell - 1}}^{t_\ell} \left( \frac{s}{t_\ell} \right)^{i + j} \e^{- 2 \mu_k (t_\ell - s)} \text{d}s \, .$$
The random vectors are independent in $\ell$ and $k$ and have bounded variance over $\ell$ and $k$ and and can therefore be simulated fast with complexity $O(m^3)$ using standard methods. An additional benefit is that the components of the covariance matrix $\Sigma_{i,j}$ can be computed exactly using integration by parts. This procedure therefore gives us a practical method to simulate $\Tilde{X}^{h}_M(t_n) \approx X(t_n)$.

\vspace{2 mm}

To use our method in practice, one must also choose a method for selecting the piecewise polynomial $p$. The simplest is to use a constant approximation and choose $p_t(s) \big|_{s \in [t_{\ell - 1}, t_\ell)} = (t - t_{\ell - 1} - \theta(t_{\ell} - t_{\ell - 1}))^{\gamma - 1}$ for some $\theta \in [0,1]$. Another option is to use an $\mathcal{L}^2$-projection to minimise $\int_{t_{\ell - 1}}^{t_{\ell}} (p_t(s) - (t - s)^{\gamma - 1})^2 \, \dint s$ on each interval $[t_{\ell - 1}, t_{\ell}]$. Lemma \ref{lem:projection-computation} below gives a practical to compute the coefficients of these projection polynomials.
 
\subsection{Convergence of the spectral truncation (\ref{eq:eigenfunction-truncation})} \label{sec:spectral-convergence}

\begin{thm} \label{thm:spectral-convergence} Assume Assumption \ref{assumption:operator} holds, and let $X$ and $\Tilde{X}$ be as defined in (\ref{eq:eigenfunction-expansion}) and (\ref{eq:eigenfunction-truncation}) respectively. If $1 + r_\lambda - (2 \gamma-1) r_\mu < 0$, then for all $t \in [0,T]$ we have $X(t) = \sum_{k = 1}^{\infty} c_k(t) e_k \in \mathcal{L}^2(\Omega, \mathcal{L}^2(\mathcal{D}))$ and
$$\EV{\norm{X(t) - \Tilde{X}_M(t)}^2} \leq C M^{-\frac{2 \nu}{d}} \, ,$$
where $C = \frac{C_2  \Gamma(2 \gamma - 1)}{C_1^{ 2\gamma-1}\Gamma(\gamma)^2 } \frac{d}{2 \nu}$, and $\nu = -\frac{d}{2}(1 + r_\lambda - (2\gamma-1) r_\mu)$.
\end{thm}
The proof of this theorem can be found in Section \ref{sec:proofs}.
We are especially interested in the case of the operators in (\ref{eq:operators}), i.e., $A = r^{-1}(\kappa^2 - \Delta)^{\alpha}$ and $Q = \sigma r^{-\gamma}(\kappa^2 - \Delta)^{-\beta}$. To analyse this case we use the following lemma, adapted from~\cite[Theorem 6.3.1]{davies1995}.

\begin{lem} \label{thm:laplacian_eigenproperties} Let $\mathcal{D} \subset \mathbb{R}^d$ be an open, non-empty, bounded, and connected domain. Let $\Delta := \sum_{k = 1}^d \frac{\partial^2}{\partial x_k^2}$ be defined on a subset $D(\Delta)$ of $\LD$ of sufficiently smooth functions which are zero on $\partial \mathcal{D}$, the boundary of $\mathcal{D}$. Then the operator $A := -\Delta$ has a set of orthonormal eigenfunctions $\{e_k\}_k$ spanning $\LD$, with a corresponding non-decreasing sequence of positive eigenvalues $\{\xi_k\}_k$ satisfying the estimates
$$C_\Delta k^{\frac{2}{d}} \leq \xi_k \leq D_\Delta k^{\frac{2}{d}} \, ,$$
for constants $C_\Delta > 0$ and $D_\Delta > 0$.
\end{lem}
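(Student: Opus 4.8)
The plan is to obtain the statement in two stages: first the abstract spectral structure (a complete orthonormal eigenbasis together with a non-decreasing sequence of positive eigenvalues tending to infinity), and then the quantitative two-sided bound $C_\Delta k^{2/d}\le \xi_k\le D_\Delta k^{2/d}$ by a domain-monotonicity argument that sandwiches $\mathcal{D}$ between two cubes whose eigenvalues are explicit.

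First I would realise $A=-\Delta$ as the non-negative self-adjoint operator associated with the Dirichlet form $a(u,v)=\int_{\mathcal{D}}\nabla u\cdot\nabla v\,\dint x$ with form domain $H_0^1(\mathcal{D})$. Because $\mathcal{D}$ is bounded, the Poincaré inequality makes $a$ coercive, so $A$ is strictly positive; in particular $\xi_1>0$. The Rellich--Kondrachov theorem supplies the compact embedding $H_0^1(\mathcal{D})\hookrightarrow\LD$, whence $A$ has compact resolvent. The spectral theorem for self-adjoint operators with compact resolvent then furnishes an orthonormal basis $\{e_k\}_k$ of $\LD$ consisting of eigenfunctions, together with a non-decreasing sequence $0<\xi_1\le\xi_2\le\cdots\to\infty$ of eigenvalues. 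This already yields every assertion of the lemma except the power-law bounds.

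For the bounds I would invoke the Courant--Fischer min-max characterisation
\[
\xi_k(\Omega)=\min_{\substack{V\subseteq H_0^1(\Omega)\\ \dim V=k}}\ \max_{0\neq u\in V}\frac{\int_\Omega|\nabla u|^2\,\dint x}{\int_\Omega|u|^2\,\dint x}
\]
together with domain monotonicity. Since $\mathcal{D}$ is open and non-empty it contains an open cube $C_{\mathrm{in}}$, and since it is bounded it is contained in an open cube $C_{\mathrm{out}}$. Extension by zero gives inclusions $H_0^1(C_{\mathrm{in}})\hookrightarrow H_0^1(\mathcal{D})\hookrightarrow H_0^1(C_{\mathrm{out}})$ that leave each Rayleigh quotient unchanged; restricting the admissible subspaces to those coming from $C_{\mathrm{in}}$ can only raise the min-max value, while the subspaces arising from $\mathcal{D}$ form a subfamily of those for $C_{\mathrm{out}}$. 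Together these give the bracketing $\xi_k(C_{\mathrm{out}})\le \xi_k(\mathcal{D})\le \xi_k(C_{\mathrm{in}})$ for every $k$. On a cube $(0,L)^d$ the Dirichlet eigenfunctions are products of sines and the eigenvalues are $\tfrac{\pi^2}{L^2}|n|^2$ with $n\in\mathbb{Z}_{>0}^d$, so the counting function $N_{C_L}(\lambda)$ equals the number of positive integer lattice points inside a Euclidean ball of radius $\tfrac{L}{\pi}\sqrt{\lambda}$. Comparing this count with the volume of the ball (packing and covering by unit cubes) yields $c\,\lambda^{d/2}\le N_{C_L}(\lambda)\le C\,\lambda^{d/2}$, which inverts to $a_L k^{2/d}\le\xi_k(C_L)\le b_L k^{2/d}$; substituting $C_{\mathrm{in}}$ and $C_{\mathrm{out}}$ into the bracketing then delivers the claimed estimate.

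The main obstacle is this last, quantitative step: turning the lattice-point heuristic into genuine two-sided bounds valid for \emph{all} $k\ge 1$ rather than only asymptotically as $k\to\infty$. The comparison with ball volumes is accurate only once the radius $\tfrac{L}{\pi}\sqrt{\lambda}$ is not too small, so one must treat the finitely many smallest eigenvalues separately and absorb them by shrinking $C_\Delta$ and enlarging $D_\Delta$; a little care is likewise needed to invert a bound on the counting function into a bound on $\xi_k$. I note that connectedness of $\mathcal{D}$ is not actually used in this argument---it is inherited from the hypotheses of \cite[Theorem 6.3.1]{davies1995}---and that only the crude power-law bounds, not the sharp Weyl constant, are needed for \Cref{thm:spectral-convergence}.
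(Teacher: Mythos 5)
The paper offers no proof of this lemma at all: it is stated as an adaptation of~\cite[Theorem 6.3.1]{davies1995}, so there is no internal argument to compare against, and your proposal should be judged as a self-contained reconstruction of the cited result. As such it is correct and follows the standard route. The form-theoretic realisation of $-\Delta$ on $H_0^1(\mathcal{D})$, Poincar\'e for strict positivity, Rellich--Kondrachov for compact resolvent, and the spectral theorem give the orthonormal eigenbasis and the non-decreasing positive eigenvalues; note that the compact embedding $H_0^1(\mathcal{D}) \hookrightarrow \LD$ holds for an \emph{arbitrary} bounded open set precisely because you work with $H_0^1$ (extend by zero to a large ball), so no boundary regularity is needed, consistent with the lemma's weak hypotheses. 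The Courant--Fischer bracketing $\xi_k(C_{\mathrm{out}}) \leq \xi_k(\mathcal{D}) \leq \xi_k(C_{\mathrm{in}})$ is valid exactly because Dirichlet forms are monotone under extension by zero (this step would fail for Neumann conditions), and a single inscribed cube suffices for the upper bound since only the crude power law, not the sharp Weyl constant, is claimed. The one genuinely delicate point --- inverting two-sided bounds on the counting function into bounds on $\xi_k$ valid for every $k \geq 1$ rather than asymptotically --- you flag explicitly, and it is harmless: the cube counting bounds hold for all $\lambda$ above a fixed threshold, and the finitely many initial eigenvalues are positive, so they are absorbed by shrinking $C_\Delta$ and enlarging $D_\Delta$. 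Your closing observations are also accurate: connectedness of $\mathcal{D}$ plays no role in the argument, and the crude constants are all that \Cref{thm:spectral-convergence} and \Cref{cor:laplacian-spectral-convergence} require.
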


This lemma can be used to show that the operators in (\ref{eq:operators}) satisfy Assumption \ref{assumption:operator} with $r_\mu = \frac{2 \alpha}{d}$, and $r_\lambda = - \frac{2 \beta}{d}$. The criterion $1 + r_\lambda - r_\mu(2 \gamma - 1) < 0$ for existence of solutions to (\ref{eq:stochastic-convolution}) then becomes $\beta + (2\gamma - 1)\alpha - \frac{d}{2} > 0$. In addition we can use~\cite[Theorem 3.12]{kirchner2022} to find that $\{X(t)\}_{t \in [0,T]}$ has at most $\beta + (2\gamma - 1)\alpha - \frac{d}{2}$ spatial {mean-square} derivatives and at most $\gamma - \frac{1}{2} + \frac{1}{2\alpha}\min(\beta - \frac{d}{2}, 0)$ temporal {mean-square} derivatives.

\begin{cor} \label{cor:laplacian-spectral-convergence} Let $\mathcal{D} \subset \mathbb{R}^d$ be a bounded domain. Assume $\alpha>0$, $\beta>0$, $\kappa > 0$, $r >0$, $\sigma > 0$, and $\gamma>\frac12$. Let $A = r^{-1}(\kappa^2 - \Delta)^{\alpha}$ and $Q = \sigma^2 r^{-2\gamma}(\kappa^2 - \Delta)^{-\beta}$ and assume that $\{X(t)\}_{t \in [0,T]}$ satisfies (\ref{eq:stochastic-convolution}). Assume further that $\beta + (2\gamma - 1) \alpha - \frac{d}{2} > 0$. Then for all $t \in [0,T]$
$$\EV{\norm{X(t) - \Tilde{X}_M(t)}^2} \leq C M^{-\frac{2 \nu}{d}} \, ,$$
where $C = \frac{\sigma r^{\gamma - 1} \Gamma(2 \gamma - 1)}{C_{\Delta}^{\nu + \frac{d}{2}} \Gamma(\gamma)^2} \frac{d}{2 \nu}$, and $\nu = \beta + (2\gamma - 1) \alpha - \frac{d}{2}$.
\end{cor}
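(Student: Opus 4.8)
The plan is to deduce \Cref{cor:laplacian-spectral-convergence} directly from \Cref{thm:spectral-convergence} by verifying that the operators in (\ref{eq:operators}) satisfy \Cref{assumption:operator} and then identifying the relevant constants. First I would observe that both $A = r^{-1}(\kappa^2 - \Delta)^\alpha$ and $Q = \sigma r^{-\gamma}(\kappa^2 - \Delta)^{-\beta}$ are spectral functions of the single operator $\kappa^2 - \Delta$. Applying \Cref{thm:laplacian_eigenproperties} to $-\Delta$ yields an orthonormal eigenbasis $\{e_k\}_k$ of $\LD$ with eigenvalues $\{\xi_k\}_k$ satisfying $C_\Delta k^{2/d} \leq \xi_k \leq D_\Delta k^{2/d}$. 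Since $\kappa^2 - \Delta$ has the same eigenfunctions with eigenvalues $\kappa^2 + \xi_k$, the operators $A$ and $Q$ also diagonalise in $\{e_k\}_k$; this simultaneously establishes \Cref{assumption:operator}(i)--(iii), with a genuinely joint basis.

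Next I would compute the eigenvalues explicitly: $A$ has eigenvalues $\mu_k = r^{-1}(\kappa^2 + \xi_k)^\alpha$ and $Q$ has eigenvalues $\lambda_k = \sigma r^{-\gamma}(\kappa^2 + \xi_k)^{-\beta}$. Because $x \mapsto x^\alpha$ is increasing and $x \mapsto x^{-\beta}$ is decreasing for $\alpha, \beta > 0$, discarding the nonnegative shift $\kappa^2$ gives the one-sided bounds $\mu_k \geq r^{-1}\xi_k^\alpha \geq r^{-1} C_\Delta^\alpha k^{2\alpha/d}$ and $\lambda_k \leq \sigma r^{-\gamma}\xi_k^{-\beta} \leq \sigma r^{-\gamma} C_\Delta^{-\beta} k^{-2\beta/d}$. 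These are exactly \Cref{assumption:operator}(iv)--(v) with $r_\mu = 2\alpha/d$, $r_\lambda = -2\beta/d$, $C_1 = r^{-1} C_\Delta^\alpha$, and $C_2 = \sigma r^{-\gamma} C_\Delta^{-\beta}$. Substituting these rates into the exponent of \Cref{thm:spectral-convergence} gives $\nu = -\tfrac{d}{2}(1 + r_\lambda - (2\gamma-1)r_\mu) = \beta + (2\gamma-1)\alpha - \tfrac{d}{2}$, and the hypothesis $1 + r_\lambda - (2\gamma-1)r_\mu < 0$ reduces precisely to the assumed condition $\beta + (2\gamma-1)\alpha - \tfrac{d}{2} > 0$, so \Cref{thm:spectral-convergence} applies.

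Finally, the bound $\EV{\norm{X(t) - \Tilde{X}_M(t)}^2} \leq C M^{-2\nu/d}$ follows, and it remains to simplify the constant $C = \tfrac{C_2 \Gamma(2\gamma-1)}{C_1^{2\gamma-1}\Gamma(\gamma)^2}\tfrac{d}{2\nu}$ by inserting the values of $C_1$ and $C_2$. I do not expect any conceptual obstacle here: the corollary is essentially a specialisation of \Cref{thm:spectral-convergence}, so the only real work is careful bookkeeping of constants and confirming that the $\kappa^2$ shift may be dropped by monotonicity without affecting the convergence rate. The one point warranting attention is the power of $C_\Delta$ appearing in the final constant; collecting the factors gives $C_2 / C_1^{2\gamma-1} = \sigma r^{\gamma-1} C_\Delta^{-\beta - (2\gamma-1)\alpha}$, and I would double-check this exponent carefully against the stated form before presenting the clean expression for $C$.
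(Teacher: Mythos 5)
Your proposal follows exactly the paper's own route: apply \Cref{thm:laplacian_eigenproperties} to obtain the joint eigenbasis and the eigenvalue bounds, verify \Cref{assumption:operator} with $r_\mu = \frac{2\alpha}{d}$ and $r_\lambda = -\frac{2\beta}{d}$, and then invoke \Cref{thm:spectral-convergence}; the argument is correct and complete in substance. The one place where you diverge is precisely where you are more careful than the paper: the paper's proof takes $C_1 = r^{-1}C_\Delta$ and $C_2 = \sigma r^{-\gamma}C_\Delta$, silently dropping the powers $\alpha$ and $-\beta$ when passing from $\xi_k \geq C_\Delta k^{\frac{2}{d}}$ to bounds on $\mu_k = r^{-1}(\kappa^2+\xi_k)^{\alpha}$ and $\lambda_k = \sigma r^{-\gamma}(\kappa^2+\xi_k)^{-\beta}$, and it is exactly this slip that produces the printed exponent $C_\Delta^{2-2\gamma}$. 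Your values $C_1 = r^{-1}C_\Delta^{\alpha}$ and $C_2 = \sigma r^{-\gamma}C_\Delta^{-\beta}$ are the correct ones, and they give $C_2/C_1^{2\gamma-1} = \sigma r^{\gamma-1}C_\Delta^{-\beta-(2\gamma-1)\alpha} = \sigma r^{\gamma-1}C_\Delta^{-(\nu+\frac{d}{2})}$, so the exponent of $C_\Delta$ you flagged for double-checking is indeed the right one and the corollary's stated constant is in error; the convergence rate $M^{-\frac{2\nu}{d}}$ is of course unaffected.
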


\begin{proof} By Theorem \ref{thm:laplacian_eigenproperties} $-\Delta$ has an orthonormal basis of eigenfunctions $\{e_k\}_{k = 1}^{\infty}$ with corresponding eigenvalues $\{\xi_k\}_{k = 1}^{\infty}$, satisfying $C_\Delta k^{\frac{2}{d}} \leq \xi_k$ by Theorem \ref{thm:laplacian_eigenproperties}. Thus $A$ and $Q$ have eigenvalues $\mu_k = r^{-1}(\kappa^2 + \xi_k)^{\alpha}$ and $\lambda_k = \sigma r^{-\gamma}(\kappa^2 + \xi_k)^{-\beta}$ and $\{\mu_k\}_{k = 1}^\infty$ and $\{\lambda_k\}_{k = 1}^{\infty}$ satisfy the bounds $r^{-1} {C_\Delta^{\alpha}} k^{\frac{2 \alpha}{d}}\leq \mu_k$ and $\lambda_k \leq \sigma r^{-\gamma} {C_\Delta^{-\beta}} k^{- \frac{2 \beta}{d}}$. $A$ and $Q$ therefore fulfill Assumption \ref{assumption:operator} with $C_1 = r^{-1} {C_\Delta^{\alpha}}$, $C_2 = \sigma r^{-\gamma} {C_\Delta^{-\beta}}$, $r_\mu = \frac{2 \alpha}{d}$, and $r_\lambda = - \frac{2 \beta}{d}$. We can then use Theorem \ref{thm:spectral-convergence} to conclude.
\end{proof}

\begin{rem}
We see that in Corollary \ref{cor:laplacian-spectral-convergence} the parameter $\nu$ has the interpretation of being exactly the spatial smoothness of the solution $\{X(t)\}_{t \in [0,T]}$. This is the motivation for introducing the parameter also in Theorem \ref{thm:spectral-convergence}, where it may seem unnatural.
\end{rem}

\begin{rem} It is straightforward to extend this corollary to the case $A = r^{-1}(\kappa_1^2 - \Delta)^{\alpha}$ and $Q = \sigma^2 r^{-2\gamma}(\kappa_2^2 - \Delta)^{-\beta}$, where $\kappa_1 \neq \kappa_2$. We have avoided this in order to reduce notational clutter.
\end{rem}

\begin{rem}
If we substitute the Laplace-Beltrami operator for the Laplace operator, then the above corollary also holds for the sphere. In fact it holds for any compact Riemannian manifold. This is because Assumption \ref{assumption:operator} holds also in this setting by~\cite[Theorem 4.3.1]{Lablee2015-pi} and~\cite[Theorem 7.6.4]{Lablee2015-pi}.
\end{rem}

\subsection{Approximation of the coefficients processes $c_k(t)$ for $t\in(0,T]$}
\label{sec:method1}
To simplify notation we define
\begin{equation*} \label{eq:fractional-function}
  f_t(s) := (t - s)^{\gamma - 1} \, .   
\end{equation*}
We make the following definitions and assumptions:
\begin{ass} \label{assumption:pm} {Let $t \in [0,T]$}. Let $\mathcal{I} := \{t_\ell\}_{\ell = 0}^N$ be a discretisation of $[0,T]$ such that $0 = t_0 < t_1 < ... < t_N = T$. Define 
$$\mathcal{P}_m(\mathcal{I}) := \{p \in \mathcal{L}^2([0,{t)}) \, : \, p|_{[t_{\ell - 1}, t_\ell)} \in \mathbb{P}_m([t_{\ell - 1}, t_\ell)) \quad \forall \quad \ell = 1,2,...,N \} \, ,$$
where $\mathbb{P}_m([a,b))$ is the space of polynomials of order {at most} $m$ on the interval $[a,b)$. Let $\Pi_m: \mathcal{L}^2([0,{t)}) \supset \mathcal{A} \rightarrow \mathcal{P}_m(\mathcal{I})$, and assume
\begin{enumerate}[(i)]
    \item $\Pi_m g = g$ for all $g \in \mathcal{P}_m(\mathcal{I}) \subset \mathcal{L}^2([0,{t)})$.\smallskip
    \item $\mathcal{B} := \{g(s) + \alpha f_t(s) | \alpha \in \mathbb{R}$, $g \in \mathcal{P}_m(\mathcal{I}) \} \subseteq \mathcal{A}$\smallskip
    \item There exists a $C_\Pi > 0$ such that $\norm{\Pi_m \chi}_{\mathcal{L}^2([0,t))} \leq C_\Pi \norm{\chi}_{\mathcal{L}^2([0,t))}$ for all $\chi \in \mathcal{B}$.
\end{enumerate}
\end{ass}

We make (\ref{eq:full-disc-sum}) precise by letting $\Pi_m$ be as defined in Assumption \ref{assumption:pm} and defining the approximation $\Tilde{c}_k^{(m)}$ of the spectral coefficient $c_k$ in \eqref{eq:coefficient-integral} by
\begin{equation} \label{eq:order-m-method} \Tilde{c}_k^{(m)}(t) := \frac{\sqrt{\lambda_k}}{\Gamma(\gamma)} \int_0^t \e^{-\mu_k (t-s)} \left( \Pi_m f_t \right) \text{d} \omega_k(s) \, , 
\end{equation}
The following theorem establishes error bounds for the approximation in (\ref{eq:order-m-method}). The proof can be found in Section \ref{sec:proofs}.

\begin{thm} \label{thm-convergence} {Let $\gamma > \frac{1}{2}$, and} assume Assumption \ref{assumption:pm} holds. Let $c_k(t)$ and $\Tilde{c}_k^{(m)}(t)$ be as defined in (\ref{eq:coefficient-integral}) and (\ref{eq:order-m-method}). Let $\mu_k > 0$, $\lambda_k > 0$, and $h := \max_{\ell}(t_\ell - t_{\ell - 1})$.
\begin{enumerate}[(a)] 
    \item Then $$\EV{\abs{c_k(t) - \Tilde{c}^{(m)}_k(t)}^2} \leq  \sqrt{\lambda_k} \delta_{\gamma, h} C(C_\Pi, t, \gamma)  \, h^{\min(2\gamma - 1, 2m + 2)} \, ,$$
    where $C(C_\Pi, t, \gamma)$ depends on $C_\Pi$, $t$, and $\gamma$ and 
    $$\delta_{\gamma,h} := \begin{cases}
    1 & \text{when} \, \gamma \neq m + \frac{3}{2} \, , \\
    1 +\sqrt{\log(t h^{-1})} & \text{when} \,  \gamma = m + \frac{3}{2} \, .
    \end{cases}$$
    \item Assuming additionally that $\gamma > m + \frac{3}{2} + \frac{1}{q}$ for some $q \in [2, \infty)$ and $\norm{\Pi_m \chi}_{\mathcal{L}^p([0,t))} \leq C_{\Pi,p} \norm{\chi}_{\mathcal{L}^p([0,t)}$ for all $\chi \in \mathcal{B}$, where $p \in (2,\infty]$ satisfies $\frac{1}{q} + \frac{1}{p} = \frac{1}{2}$, then
    $$\EV{\abs{c_k(t) - \Tilde{c}^{(m)}_k(t)}^2} \leq \sqrt{\lambda_k}\mu_k^{-\frac{1}{q}} C(C_{\Pi, p}, t, \gamma, q) h^{2m + 2} \, ,$$
    where the $C(C_{\Pi, p}, t, \gamma,q)$ depends on $C_{\Pi, p}$, $t$, $\gamma$, and $q$. 
\end{enumerate}
\end{thm}


\begin{rem} \label{remark:acc-mesh} When $\gamma < 1$ we have a singularity at $t$ in the function $f_t$. You might then expect to get a faster convergence when using a mesh with nodes that accumulate at the singularity. In our case, since $t$ is not fixed, we have a moving singularity, which makes it intractable to use an accumulating mesh. For Lebesgue integrals a change of variables would fix this problem, but this is not immediately applicable for the Itô integral in (\ref{eq:coefficient-integral}) since $\dint \omega_k(t)$ is almost surely not translation-invariant. 
\vspace{2mm}
    
\end{rem}

\subsection*{Case 1: Orthogonal $\mathcal{L}^2$-projection} One possible choice of $\Pi_m$ is the orthogonal $\mathcal{L}^2$-projection $\pi_m$ defined such that $\inprod{\pi_m f}{g} = \inprod{f}{g}$ for all $g \in \mathcal{P}_m(\mathcal{I})$.

\begin{cor} \label{cor:projection-method} Let $t \in [0,T]$, $\mu_k > 0$, $\lambda_k > 0$, $\gamma > \frac{1}{2}$, and $h := \max_{\ell}(t_\ell - t_{\ell - 1})$, and $\pi_m: \mathcal{L}^2([0,t)) \rightarrow {\mathcal{P}_m(\mathcal{I})}$ be the orthogonal $\mathcal{L}^2$-projection. Then
$$ \EV{\abs{c_k(t) - \Tilde{c}^{(m)}_k(t)}^2} \leq \sqrt{\lambda_k} C_{\gamma, t} \delta_{\gamma, h} \, h^{\min(2\gamma - 1, 2m + 2)} \, ,$$
where the constant $C_{\gamma, t}$ depends on $\gamma$ and $t$, and $\delta_{\gamma, h}$ is as defined in Theorem \ref{thm-convergence}. 
\end{cor}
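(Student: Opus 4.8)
The plan is to recognise this corollary as a direct specialisation of \Cref{thm-convergence} to the particular choice $\Pi_m = \pi_m$, so the entire task reduces to verifying that the orthogonal $\mathcal{L}^2$-projection meets the two hypotheses imposed on $\Pi_m$ there. Concretely, I must check (a) the reproducing property $\pi_m g = g$ for every $g \in \mathcal{P}_m(\mathcal{I})$, (b) the boundedness estimate $\norm{\pi_m h}_{\mathcal{L}^2([0,t))} \leq C_\Pi \norm{h}_{\mathcal{L}^2([0,t))}$ on the relevant class of functions, and (c) the domain condition that $g + \alpha f_t$ lies in the domain of $\pi_m$ for all $g \in \mathcal{P}_m(\mathcal{I})$ and $\alpha \in \mathbb{R}$. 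Once these are in hand, the first (and main) estimate of \Cref{thm-convergence} applies verbatim with the constant $C_{\gamma,t} := C(C_\Pi, t, \gamma)$.

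First I would dispatch (a): by the defining relation $\inprod{\pi_m f}{g} = \inprod{f}{g}$ for all $g \in \mathcal{P}_m(\mathcal{I})$, the map $\pi_m$ is the orthogonal projection of $\mathcal{L}^2([0,t))$ onto the closed subspace $\mathcal{P}_m(\mathcal{I})$, and any projection fixes its range, giving $\pi_m g = g$ for $g \in \mathcal{P}_m(\mathcal{I})$. For (b) I would invoke the standard fact that an orthogonal projection is a contraction: writing $h = \pi_m h + (h - \pi_m h)$ as an orthogonal decomposition and applying the Pythagorean identity yields $\norm{\pi_m h}_{\mathcal{L}^2([0,t))}^2 + \norm{h - \pi_m h}_{\mathcal{L}^2([0,t))}^2 = \norm{h}_{\mathcal{L}^2([0,t))}^2$, hence $\norm{\pi_m h}_{\mathcal{L}^2([0,t))} \leq \norm{h}_{\mathcal{L}^2([0,t))}$. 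Thus the hypothesis of \Cref{thm-convergence} holds with the explicit value $C_\Pi = 1$, and crucially this bound is global on $\mathcal{L}^2([0,t))$, so it holds a fortiori for every $h$ of the form $g + \alpha f_t$ without any case analysis.

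For (c), since $\pi_m$ is defined on all of $\mathcal{L}^2([0,t))$, it suffices to confirm that $g + \alpha f_t \in \mathcal{L}^2([0,t))$; as $g \in \mathcal{P}_m(\mathcal{I}) \subset \mathcal{L}^2([0,t))$, this amounts to checking $f_t \in \mathcal{L}^2([0,t))$. Here I would compute $\norm{f_t}_{\mathcal{L}^2([0,t))}^2 = \int_0^t (t-s)^{2\gamma - 2}\,\dint s = \int_0^t u^{2\gamma-2}\,\dint u$, which is finite precisely because $2\gamma - 2 > -1$, i.e. because $\gamma > \tfrac{1}{2}$, an assumption of the corollary. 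With all three conditions verified and $C_\Pi = 1$, I would conclude by applying the first bound of \Cref{thm-convergence}, identifying $C_{\gamma,t} = C(1, t, \gamma)$ so that it depends only on $\gamma$ and $t$, and carrying over $\delta_{\gamma,h}$ unchanged. I do not anticipate a genuine obstacle: the only point requiring any care is the square-integrability of the singular weight $f_t$ near $s = t$, and that is exactly where the standing hypothesis $\gamma > \tfrac12$ is used.
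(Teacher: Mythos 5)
Your proposal is correct and follows exactly the paper's route: the paper's proof is the one-line observation that the result follows from \Cref{thm-convergence} together with the $\mathcal{L}^2$-boundedness of $\pi_m$, which you verify in detail (reproducing property, contraction bound $C_\Pi = 1$ via the Pythagorean identity, and square-integrability of $f_t$ from $\gamma > \tfrac{1}{2}$). Your write-up simply makes explicit the hypothesis checks that the paper leaves implicit.
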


\begin{proof} This follows immediately by Theorem \ref{thm-convergence} and the $\mathcal{L}^2$-boundedness of $\pi_m$.
\end{proof}
In our case doing computations with the $\mathcal{L}^2$-projection is tractable, as the following lemma shows. The proof of the lemma can be found in Section \ref{sec:proofs}.
\begin{lem} \label{lem:projection-computation} Define $s \mapsto (t - s)^{\gamma - 1}$ as a function on $[a,b) \subset [0, \infty)$. Let $\{q_l\}_{l = 0}^m$ be an orthonormal basis for $\mathbb{P}_m([a,b))$, with $q_l(s) := \sum_{j = 0}^m q_l^{(j)} s^j$. Let
$$i_k := \sum_{j = 0}^k \binom{k}{j} \frac{(-1)^j t^{k - j}}{\gamma + j} \left[ (t - a)^{\gamma + j} - (t - b)^{\gamma + j} \right] \, .$$
Denote by $p(s) := \sum_{j = 0} \alpha_j s^j$ the $\mathcal{L}^2$-projection of $s \mapsto (t - s)^{\gamma - 1}$ into $\mathbb{P}_m([a,b))$. Then
$$\alpha_j = \sum_{k = 0}^m \sum_{l = 0}^m q_l^{(k)} q_l^{(j)} i_k \, .$$
\end{lem}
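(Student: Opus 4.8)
The plan is to reduce the statement to the standard formula for an orthogonal projection onto a finite-dimensional subspace together with one explicit integral evaluation. Write $f(s) := (t-s)^{\gamma-1}$, regarded as an element of $\mathcal{L}^2([a,b))$. Since $\{q_l\}_{l=0}^m$ is an orthonormal basis of $\mathbb{P}_m([a,b))$, the $\mathcal{L}^2([a,b))$-projection is $p = \sum_{l=0}^m \inprod{f}{q_l}\, q_l$, where here and below $\inprod{\cdot}{\cdot}$ denotes the $\mathcal{L}^2([a,b))$ inner product. Substituting $q_l(s) = \sum_{j=0}^m q_l^{(j)} s^j$ and collecting the coefficient of $s^j$ gives
$$\alpha_j = \sum_{l=0}^m \inprod{f}{q_l}\, q_l^{(j)} \, .$$
Expanding the inner product once more, $\inprod{f}{q_l} = \sum_{k=0}^m q_l^{(k)} \int_a^b (t-s)^{\gamma-1} s^k \, \dint s$, so everything reduces to showing that $\int_a^b (t-s)^{\gamma-1} s^k \, \dint s = i_k$; the claimed identity then follows by substituting this value and interchanging the two finite sums to read off $\alpha_j = \sum_{k=0}^m \sum_{l=0}^m q_l^{(k)} q_l^{(j)} i_k$.

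To evaluate the integral I would expand the monomial about $t$. Writing $s = t - (t-s)$ and applying the binomial theorem yields $s^k = \sum_{j=0}^k \binom{k}{j} t^{k-j} (-1)^j (t-s)^j$. Multiplying by $(t-s)^{\gamma-1}$ and integrating term by term reduces the computation to the elementary integrals $\int_a^b (t-s)^{\gamma-1+j} \, \dint s$, which under the substitution $u = t-s$ become $\int_{t-b}^{t-a} u^{\gamma-1+j} \, \dint u = \frac{(t-a)^{\gamma+j} - (t-b)^{\gamma+j}}{\gamma+j}$. Assembling the terms reproduces exactly the definition of $i_k$, which completes the reduction.

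There is no substantial obstacle; the lemma is in essence a bookkeeping calculation combining the projection formula with a single integral. The only point that deserves a remark is integrability of $f$: when $\frac{1}{2} < \gamma < 1$ the factor $(t-s)^{\gamma-1}$ blows up as $s \uparrow t$, but since $\gamma - 1 > -\frac{1}{2} > -1$ the singularity is integrable, and every exponent $\gamma + j$ occurring above is strictly positive, so the antiderivatives are finite up to the endpoint (with $(t-b)^{\gamma+j} = 0$ in the boundary case $b = t$). I would note this at the outset to license the term-by-term integration; thereafter the argument is a direct computation.
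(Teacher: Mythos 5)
Your proposal is correct and follows essentially the same route as the paper's proof: both express the projection via the orthonormal basis $\{q_l\}$, reduce everything to the moment integrals $\int_a^b s^k(t-s)^{\gamma-1}\,\dint s$, and evaluate these by the binomial expansion $s^k = (t-(t-s))^k$ to obtain $i_k$. Your added remark on integrability of the singularity at $s=t$ when $\tfrac12<\gamma<1$ is a sensible precaution that the paper leaves implicit, but it does not change the argument.
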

\subsection*{Case 2: Piecewise constant interpolation} We are also interested in $\Pi_0: C([0,t_n)) {\cap \mathcal{L}^2([0,t_n))} \rightarrow \mathcal{P}_0(\mathcal{I})$ defined by
\begin{equation} \label{def:pi_0}
\Pi_0: f \mapsto \sum_{\ell = 1}^{n} f(t_{\ell - 1} + \theta (t_\ell - t_{\ell - 1})) \mathbb{I}_{[t_{\ell - 1}, t_\ell)}(s) \, ,
\end{equation}
where $\theta \in [0,1]$ and $t_n \in \mathcal{I}$. We can then rewrite $\Tilde{c}^{(0)}_k(t)$ as 
\begin{align} \label{eq:order0disc}
\Tilde{c}^{(0)}_k(t_n) &= \frac{\sqrt{\lambda_k}}{\Gamma(\gamma)} \sum_{\ell = 1}^n (t_n - t_{\ell - 1} - \theta h_\ell)^{\gamma - 1} \e^{-\mu_k (t-t_\ell)} \int_{t_{\ell - 1}}^{t_\ell} \e^{-\mu_k (t_\ell - s)} \, \text{d} \omega_k(s) \, ,
\end{align}
where $h_\ell := t_\ell - t_{\ell - 1}$. 
%
%
To show convergence we need the lemma below, the proof of which can be found in Section \ref{sec:proofs}.

\begin{lem} \label{lem:boundedness-p0} Let $t_n \in \mathcal{I}$ and let $\Pi_0$ be as defined in (\ref{def:pi_0}). Then, if either $\theta \in [0,1)$, or $\gamma > 1$ and $\theta \in [0,1]$, there exists $C_\Pi > 0$ such that $\norm{\Pi_0 \chi}_{\mathcal{L}^2([0,t_n))} \leq C_\Pi \norm{\chi}_{\mathcal{L}^2([0,t_n))}$ for all $\chi(s) = g(s) + \alpha f_{t_n}(s)$ where $\alpha \in \mathbb{R}$ and $g \in \mathcal{P}_0(\mathcal{I})$, and $C_\Pi$ depends only on $\theta$, $\gamma$, and ${\eta} := \frac{\max_\ell(h_\ell)}{\min_\ell(h_\ell)}$.
\end{lem}

\begin{cor} \label{cor:order-0-method} Let $t_n \in \mathcal{I}$, $\mu_k > 0$, $\lambda_k > 0$, $\gamma > \frac{1}{2}$, and $h := \max_{\ell}(h_\ell)$, where $h_{\ell} = t_\ell - t_{\ell - 1}$. Then for $n = 1,2,...N$, $\theta \in [0,1)$ we have that
$$\EV{\abs{c_k(t_n) - \Tilde{c}^{(0)}_k(t_n)}^2} \leq \sqrt{\lambda_k} C(\gamma, \theta, t, {\eta}) \delta_{\gamma, h} \, h^{\min(2\gamma - 1, 2)} \, ,$$
where $\Tilde{c}^{(0)}_k(t_n)$ is as defined in (\ref{eq:order0disc}), and $\delta_{\gamma, h}$ is as defined in Theorem \ref{thm-convergence}. The constant $C(\gamma, \theta, t, {\eta})$ depends on $\gamma$, $\theta$, $t$, and ${\eta} := \frac{\max_\ell(h_\ell)}{\min_\ell(h_\ell)}$. Additionally, if $\gamma > 1$, the results holds also for $\theta = 1$.
\end{cor}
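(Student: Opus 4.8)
The plan is to recognise \Cref{cor:order-0-method} as a direct specialisation of \Cref{thm-convergence} to the piecewise-constant operator $\Pi_0$ with $m = 0$, so that the whole argument reduces to verifying that $\Pi_0$ meets the two hypotheses of that theorem at the fixed time $t = t_n$, on the partition of $[0, t_n)$ induced by $\mathcal{I}$. First I would confirm the reproducing property $\Pi_0 g = g$ for every $g \in \mathcal{P}_0(\mathcal{I})$. This is immediate from the definition of $\Pi_0$: on each subinterval $[t_{\ell-1}, t_\ell)$ an element $g \in \mathcal{P}_0(\mathcal{I})$ is constant, so sampling it at the node $t_{\ell-1} + \theta(t_\ell - t_{\ell-1})$ returns that same constant, whence $\Pi_0 g$ and $g$ agree piecewise and thus as elements of $\mathcal{L}^2$.

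Second, I would supply the $\mathcal{L}^2$-boundedness on the affine family $h = g + \alpha f_{t_n}$ required by \Cref{thm-convergence}. This is exactly the content of \Cref{lem:boundedness-p0}, which furnishes a constant $C_\Pi$ depending only on $\theta$, $\gamma$, and $\overline{k}$, valid whenever $\theta \in [0,1)$, and valid for $\theta = 1$ as well once $\gamma > 1$. The case split on $\theta$ appearing in the corollary mirrors precisely the two hypotheses of the lemma, so invoking it in each case is the step that transfers the standing regularity assumption into the error bound.

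With both hypotheses verified, I would apply \Cref{thm-convergence} with $m = 0$. Its first conclusion yields the bound with rate $h^{\min(2\gamma - 1, 2m + 2)} = h^{\min(2\gamma - 1, 2)}$ and the logarithmic correction $\delta_{\gamma, h}$, and its constant $C(C_\Pi, t, \gamma)$ combines with the $C_\Pi = C_\Pi(\theta, \gamma, \overline{k})$ delivered by \Cref{lem:boundedness-p0} to produce the advertised constant $C(\gamma, \theta, t, \overline{k})$. One point warranting a line of justification is that the explicit formula \Cref{eq:order0disc} for $\Tilde{c}^{(0)}_k(t_n)$ really is the evaluation of the general scheme \Cref{eq:order-m-method} under this choice of $\Pi_0$; this follows by inserting the piecewise-constant approximant of $f_{t_n}$ and splitting the stochastic integral over the subintervals.

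Because the analytic difficulty has been front-loaded into \Cref{thm-convergence} and \Cref{lem:boundedness-p0}, no substantial obstacle remains in the corollary itself; the genuine care is bookkeeping around the boundary value $\theta = 1$. When $\theta = 1$ the sampling node on the final subinterval $[t_{n-1}, t_n)$ is $t_{n-1} + h_n = t_n$, which is exactly where $f_{t_n}(s) = (t_n - s)^{\gamma - 1}$ is singular for $\gamma < 1$, so there $\Pi_0 f_{t_n}$ is not even well-defined. This is why the case $\theta = 1$ is admitted only under $\gamma > 1$, where $f_{t_n}$ extends continuously to $t_n$, and it is precisely the regime in which \Cref{lem:boundedness-p0} supplies the required boundedness. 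Ensuring that this restriction, and the stated constant dependencies, match what the two cited results actually deliver is the only step I expect to demand attention.
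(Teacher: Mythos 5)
Your proposal is correct and follows exactly the paper's route: the paper's own proof is the one-line observation that the result ``follows directly by application of \Cref{lem:boundedness-p0} and \Cref{thm-convergence},'' which is precisely the specialisation you carry out, including the matching of the $\theta\in[0,1)$ versus $\gamma>1$, $\theta=1$ cases to the hypotheses of \Cref{lem:boundedness-p0}. Your additional remarks (verifying $\Pi_0 g = g$, identifying \Cref{eq:order0disc} as \Cref{eq:order-m-method} under this $\Pi_0$, and explaining the singularity of $f_{t_n}$ at $\theta=1$ when $\gamma<1$) are sound elaborations of details the paper leaves implicit.
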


\begin{proof} The result follows directly by application of Lemma \ref{lem:boundedness-p0} and Theorem \ref{thm-convergence}.
\end{proof}


\subsection{Convergence of the full discretisation}
\label{sec:main-result}

\vspace{2 mm}

We can derive an error bound for the full discretisation by combining Theorem \ref{thm:spectral-convergence} and Theorem \ref{thm-convergence}. A proof can found in Section \ref{sec:proofs}.

\begin{thm} \label{thm:main-result} Assume that Assumption \ref{assumption:operator} and Assumption \ref{assumption:pm} holds and that $\{X(t)\}_{t \in [0,T]}$ satisfies (\ref{eq:stochastic-convolution}). Assume $1 + r_\lambda - (2 \gamma-1) r_\mu < 0$ and let $t \in [0,T]$, and $\gamma > \frac{1}{2}$. Then 
$$\EV{\norm{X(t) - \Tilde{X}^{h}_M(t)}^2} \leq C^{'} M^{- \frac{2 \nu}{d}} + C^{''}_M \delta_{\gamma,h} h^{\min(2 \gamma - 1, 2m + 2)} \, ,$$
where $C^{'} = \frac{C_2  \Gamma(2 \gamma - 1)}{C_1^{ 2\gamma - 1} \Gamma(\gamma)^2 (1 + r_\lambda - (2 \gamma-1) r_\mu)}$, and $C^{''}_M = C(C_\Pi, t, \gamma) \sum_{k = 1}^M \lambda_k$, and $\nu = -\frac{d}{2}(1 + r_\lambda - (2\gamma - 1) r_\mu)$, and $\delta_{\gamma, h}$ is as defined in Theorem \ref{thm-convergence}. If in addition the assumptions of Theorem \ref{thm-convergence} b) holds, then $C^{''}_M = \frac{C}{\Gamma(\gamma)^2} \sum_{k = 1}^M \mu_k^{-\frac{2}{q}} \lambda_k$.

\end{thm}

Theorem \ref{thm:main-result} gives us the convergence of the full discretisation under reasonably general assumptions. The fact that $C_M^{''}$ is potentially unbounded in $M$ is a difficulty. In the case where $\text{Tr}(Q) = \sum_{k = 1}^{\infty} \lambda_k < \infty$ we can bound $C_M^{''}$ by $\text{Tr}(Q)$. If we have enough temporal smoothness, i.e., if $\gamma$ is large enough, we can use the second part of Theorem \ref{thm:main-result} to get the improved constant $C_M^{''} = \frac{C}{\Gamma(\gamma)^2} \sqrt{\sum_{k = 1}^M \mu_k^{-\frac{2}{q}} \lambda_k}$, allowing for a more degenerate $Q$. In many practical applications $A^{-1}$ will be trace-class, i.e., $\sum_{k = 1}^M \mu_k^{-1} < \infty$, so that in the extreme case of $\gamma > m + \frac{3}{2} + \frac{1}{2}$ we have convergence of $\sum_{k = 1}^M \mu_k^{-1} \lambda_k$ assuming only that $Q$ is bounded. Of course, we have two degrees of freedom, so even if $C_M^{''}$ is not bounded in $M$ the full discretisation still convergences, since we can first control the spatial error $C^{'} M^{-\frac{2 \nu}{d}}$ by making $M$ large and then control the temporal error $C^{''}_M \delta_{\gamma,h} h^{\min(2m + 2, 2\gamma - 1)} $ by making $h$ small. The following corollary tells us how small $h$ must be in order to achieve the optimal spatial convergence order $M^{-\frac{2\nu}{d}}$. A proof can found in Section \ref{sec:proofs}.

\begin{cor} \label{cor:opt-rate-balancing} Let the assumptions of Theorem \ref{thm:main-result} hold. Then choosing $h = M^{-\zeta}$, where
{\begin{equation*}
    \zeta = \begin{cases}
    r_\mu - \min(0, \frac{\delta}{b}) & \text{when} \, \gamma \leq m + 2 \, , \\[0.2cm]
    \frac{2 \gamma - 2}{2 m + 2} r_\mu - \min(0,\frac{\delta}{b}) & \text{when} \,  \gamma > m + 2 \, ,
\end{cases} 
\end{equation*}}
where $b = \min(2 \gamma - 1, 2 m + 2)$ and
$$\delta = \begin{cases}
    1 + r_\lambda & \text{when} \, \gamma \in (\frac{1}{2}, m + \frac{3}{2}] \, , \\
    1 + r_\lambda - (2 \gamma - 2m - 3)r_\mu & \text{when} \, \gamma \in (m + \frac{3}{2}, m + 2] \, , \\
    1 + r_\lambda - r_\mu & \text{when} \, \gamma \in (m + 2, \infty) \, ,
\end{cases} $$
then 
$$\EV{\norm{X(t) - \Tilde{X}^{h}_M(t)}^2} \lesssim M^{- \frac{2 \nu}{d}} \, ,$$
where $\nu$ is as defined in Theorem \ref{thm:spectral-convergence} and depends on $\gamma$, $r_\lambda$, and $r_\mu$. {In the special case of the operators in we have $r_\mu = \frac{2 \alpha}{d}$ and $r_\lambda = -\frac{2\beta}{d}$.}
\end{cor}

\subsection{Computational complexity and cost} \label{sec:complexity} {We are also interested in the computational complexity of the method. Consider a set of times $t_1^*, t_2^*, ..., t_J^*$, on which we are interested in simulating $X(t)$. We then apply our method on a potentially finer mesh $t_1, t_2, ... t_N$. Simulating the stochastic integrals in (\ref{eq:stochastic-innovation-integral}) and computing the sum in (\ref{eq:full-disc-sum}) then takes $Mnm^3 \leq MNm^3$ operations, so computing $X_M^h(t_\ell)$ for each $\ell = 0,...,J$ has complexity $O(JNM m^3)$, or $O(JNM)$ if we consider the method order $m$ as fixed. We can use this bound on the complexity and Corollary \ref{cor:opt-rate-balancing} to derive another bound on the computational complexity in terms of the number of operations needed required to achieve an error of $\epsilon$. A proof can found in Section \ref{sec:proofs}.
\begin{cor} \label{cor:comp-cost} Let the assumptions of Theorem \ref{thm:main-result} hold, and let $J$ be as defined above. The computational cost $\mathcal{C}$ required to achieve an error $\epsilon := \EV{\norm{X(t) - \Tilde{X}^{h}_M(t)}^2}$ can be bounded by
\begin{equation} \label{eq:comp-cost}
    \mathcal{C} \lesssim J \epsilon^{-\frac{d}{2\nu}(1 + \zeta)} \, ,
\end{equation}
where $\zeta$ is as defined in Corollary \ref{cor:opt-rate-balancing}, and $\nu$ is as defined in Theorem \ref{thm:spectral-convergence} (alternatively Corollary \ref{cor:laplacian-spectral-convergence}).
\end{cor}
For comparison, the 'naive' method of using a Cholesky decomposition of the covariance matrix to simulate (\ref{eq:coefficient-integral}) has a complexity of $O(J^3 M)$. The cost to achieve an error of $\epsilon$ is then
$$\mathcal{C}_{\text{chol}} \lesssim J^3 M \lesssim J^3 \epsilon^{-\frac{d}{2\nu}} \, .$$
This highlights both a strength and a weakness of the proposed method. If we consider $J$ fixed, then as the acceptable error $\epsilon \rightarrow 0$, Cholesky decomposition will eventually be cheaper than our method, due to the additional factor of $1 + \zeta > 1$ in the exponent in (\ref{eq:comp-cost}). However, our method scales linearly in $J$, so if we fix an error $\epsilon$ that we deem 'acceptable', then as $J \rightarrow \infty$ our method will eventually be cheaper than Cholesky decomposition. This improvement is sufficient for our method to allow for simulation for much larger $J$'s compared to what  is feasible with the standard method. Note for example that in Section \ref{sec:numerical-convergence-of-temporal-disc} below we simulate our reference solution on a reference mesh of $2^{17} = 131072$ nodes. This would have been completely unfeasible using Cholesky decomposition.

\begin{rem} Its likely possible to get complexity $O(JM)$ by replacing $\Sigma_{\ell = 1}^n$ with $\Sigma_{\ell = \max(1,n - d)}^n$ in (\ref{eq:full-disc-sum}) for some fixed lag $d$. Bounding the error of this additional approximation is beyond the scope of this work.
\end{rem}}

\section{Numerical examples} \label{sec:numerics}
We now conduct numerical experiments to demonstrate the convergence rate of our method. We first test the temporal convergence order established in Theorem \ref{thm-convergence}. We consider two specific methods: the one described in Corollary \ref{cor:order-0-method} with $\theta = 0$, and the one described in Corollary \ref{cor:projection-method} with $m = 1$. For the purpose of the numerical experiment we refer to these two methods respectively as the left-point method and the projection method. We also use a numerical experiment to demonstrate the spectral convergence order established in Theorem \ref{thm-convergence}. For this purpose we use the domain $[0,1]^2$ and spatial operators $A = r^{-1}(\kappa^2 - \Delta)^{\alpha}$ and $Q = \sigma^2 r^{-2\gamma}(\kappa^2 - \Delta)^{-\beta}$ based on the Laplacian. We also do some simulations on the unit sphere and use these simulations to demonstrate our ability to control the spatial and temporal smoothness separately.

Throughout the numerical experiment we will make use of the reparametrisation suggested in \cite{LindgrenBakka}. This reparametrisation specifies $A$ and $Q$ through the spatial smoothness $\nu_s > 0$, the temporal smoothness $\nu_t > 0$, the spatial range $r_s > 0$, the temporal range $r_t > 0$, and a ``non-separability'' parameter $0 \leq \beta_s \leq 1$. These parameters are given by
$$
\label{eq:reparametrisation}
\nu_s = \beta + (2\gamma - 1)\alpha - \frac{d}{2}, \quad
\nu_t = \gamma - \frac{1}{2} + {\frac{1}{2\alpha}}\min \left(\beta - \frac{d}{2}, 0 \right), \quad
\beta_s = \frac{(2\gamma - 1)\alpha}{\beta + (2\gamma - 1)\alpha},
$$
$$
r_s = \kappa^{-1} \sqrt{8 \nu_s}, \quad
r_t = r \kappa^{-2 \alpha} \sqrt{8 \left(\gamma - \frac{1}{2}\right)} \, ,
$$
or alternatively
\begin{equation} 
\gamma = \nu_t \max \left(1, \frac{\beta_s}{\beta^*} \right) + \frac{1}{2}, \quad \alpha = \frac{\nu_s}{2\nu_t} \min \left(1, \frac{\beta_s}{\beta^*} \right), \quad
\beta = \frac{1 - \beta_s}{\beta^*} \nu_s \, ,
\end{equation}
$$\kappa = \frac{\sqrt{8 \nu_s}}{r_s}, \quad r = \frac{r_t \kappa^{2 \alpha}}{\sqrt{8(\gamma - \frac{1}{2})}} \, ,$$
where $\beta^* = \frac{\nu_s}{\nu_s + \frac{d}{2}}$. The parameter $\sigma$ still controls the variance scaling.

\subsection{Testing the convergence of the temporal discretisation} \label{sec:numerical-convergence-of-temporal-disc}

\begin{figure}[p]
     \begin{subfigure}[b]{0.42\textwidth}
         \centering
         \includegraphics[width=\textwidth]{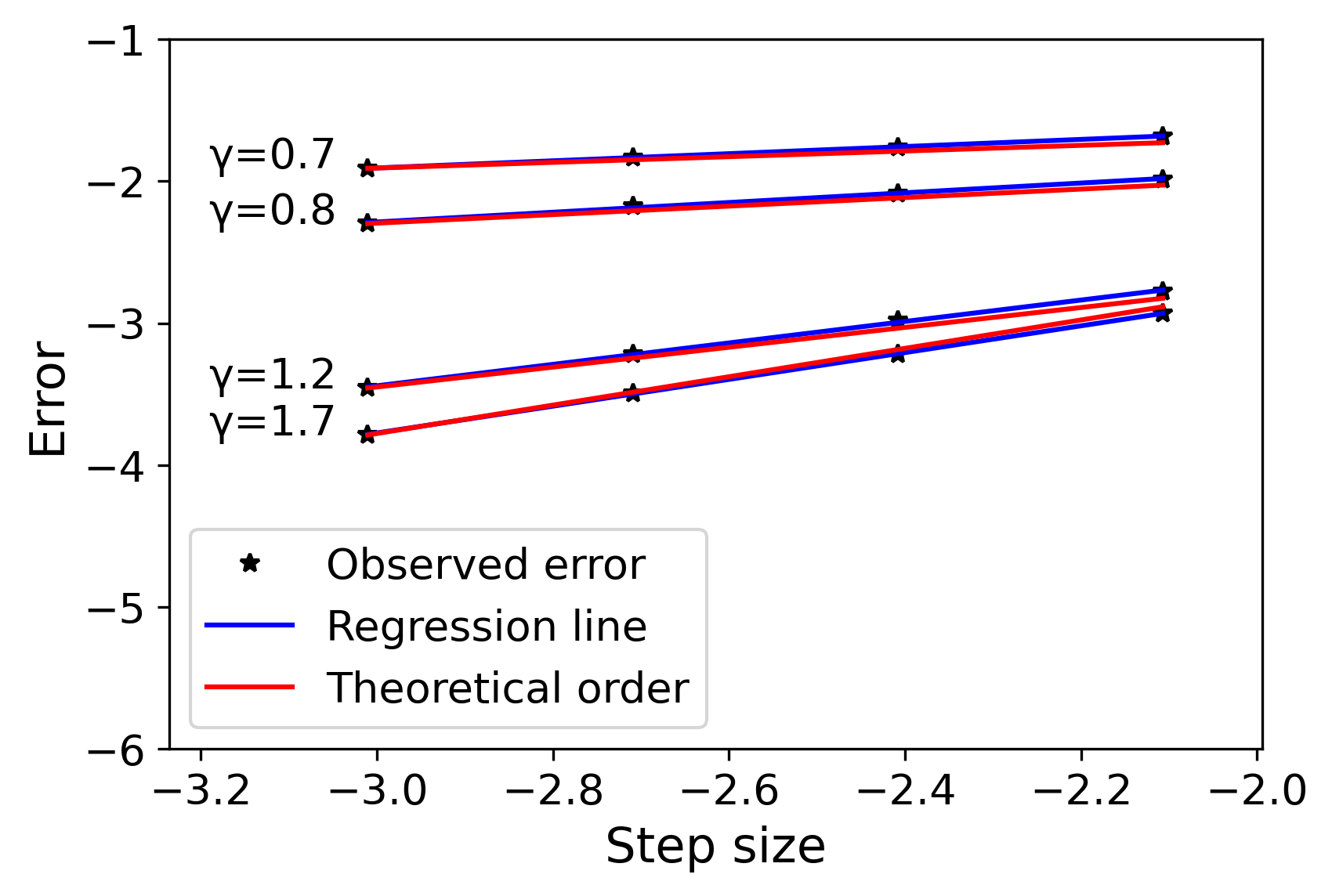}
     \end{subfigure}
     \begin{subfigure}[b]{0.42\textwidth}
         \centering
         \includegraphics[width=\textwidth]{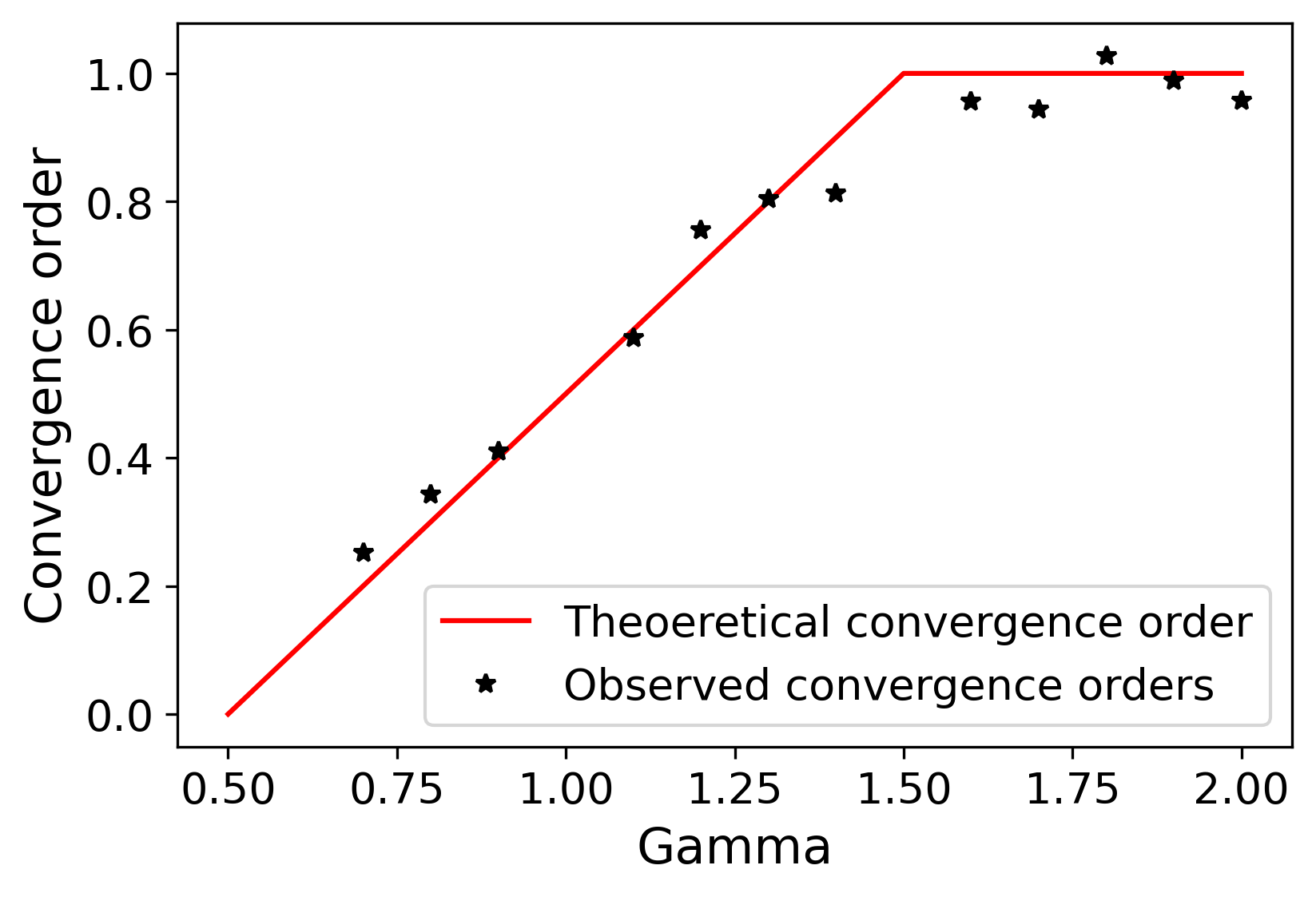}
         \label{fig:constant}
     \end{subfigure}
     \\
     \centering
     \caption{Left: Convergence plots for the left-point method for a selection of the values of $\gamma$. The regression line is a linear regression of all the observed errors. The theoretical order line emanates from the left-most point and has slope equal to the theoretical convergence order. Both axes are in $\log_{10}$-scale. Right: Plot of estimated convergence orders for the left-points method. The slope of the linear regressions of observed errors for all the tested values of $\gamma$ can be seen in black. The theoretical convergence order is also plotted.}
     \label{fig:constant_convergence_plots}
\end{figure}

\begin{figure}[p]
     \begin{subfigure}[b]{0.42\textwidth}
         \centering
         \includegraphics[width=\textwidth]{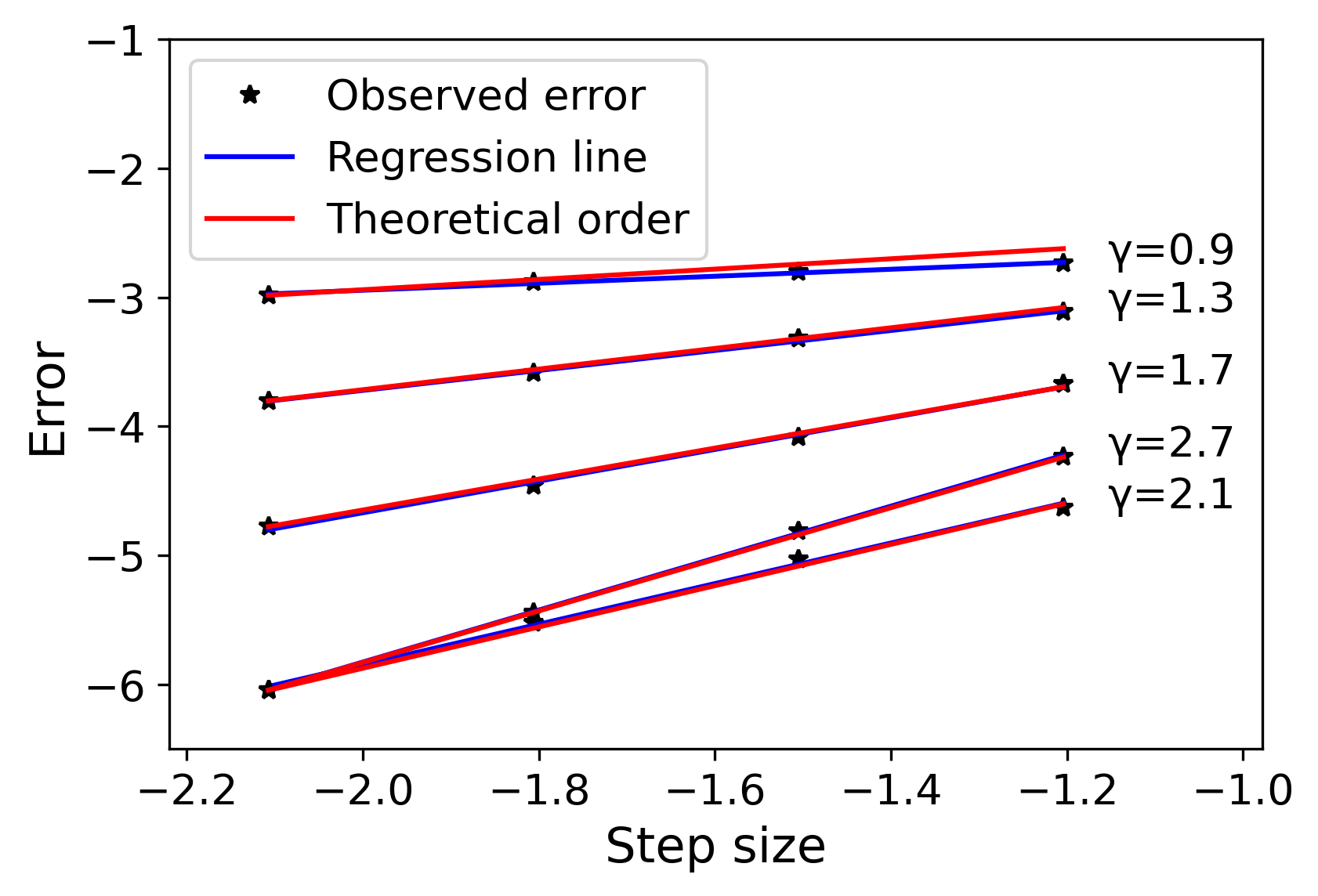}
         \label{fig:constant}
     \end{subfigure}
     \begin{subfigure}[b]{0.42\textwidth}
         \centering
         \includegraphics[width=\textwidth]{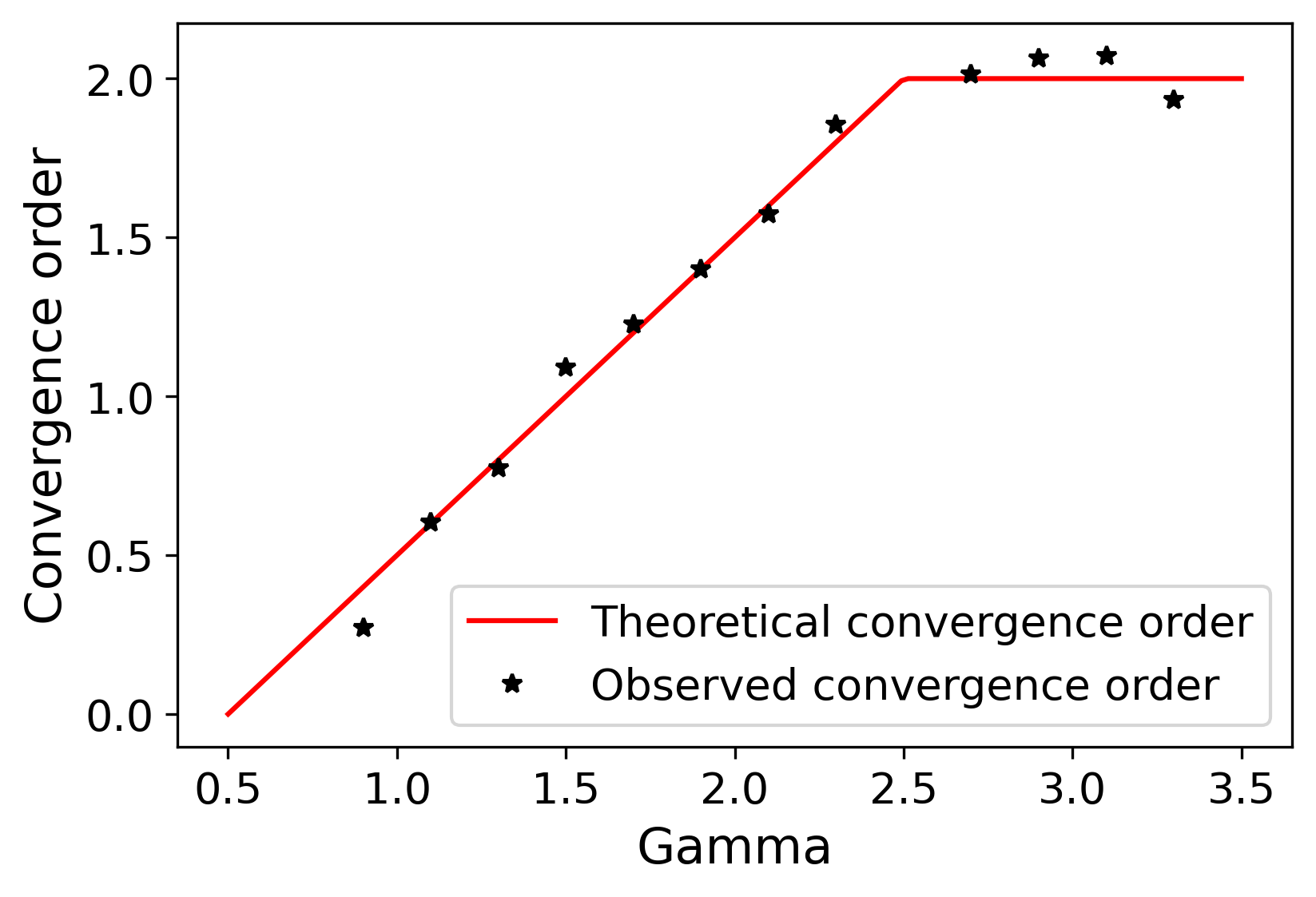}
         \label{fig:constant}
     \end{subfigure}
     \\
     \centering
     \caption{Left: Convergence plots for the projection method for a selection of the values of $\gamma$. The regression line is a linear regression of all the observed errors. The theoretical order line emanates from the left-most point and has slope equal to the theoretical convergence order. Both axes are in $\log_{10}$-scale. Right: Plot of estimated convergence orders for the projection method. The slope of the linear regressions of observed errors for all the tested values of $\gamma$ can be seen in black. The theoretical convergence order is also plotted..}
     \label{fig:projection_convergence_plots}
\end{figure}

Since we are considering only temporal convergence, we restrict ourselves to considering only a single coefficient process $c_k(t)$. Note that since we are considering only a single process the value of $k$ is irrelevant. Our goal is to approximate the relative error
$$\sqrt{\frac{\EV{\abs{c_k(T) - \Tilde{c}^{(m)}_k(T)}^2}}{\EV{\abs{c_k(T)}^2}}} \, ,$$
for different mesh sizes and for different levels of spatial smoothness. We don't know the exact solution $c_k(T)$, so we instead compute a reference solution on a very fine mesh. Once we have a realisation of the noise integrals $\int_{t_{\ell - 1}}^{t_\ell} \left( \frac{s}{t_\ell} \right)^j \e^{-\mu_k (t_\ell - s)} \, \text{d} \omega_k(s)$ we can restrict the noise to a coarser mesh by iteratively using the following trick:
\begin{align*}
& \quad \, \int_{t_{\ell - 2}}^{t_{\ell}} \left( \frac{s}{t_\ell} \right)^j \e^{-\mu_k (t_\ell-s)} \, \text{d} \omega_k(s) \\ & = \e^{-\mu_k h} \left( \frac{t_{\ell - 1}}{t_\ell} \right)^j \int_{t_{\ell - 2}}^{t_{\ell - 1}} \left( \frac{s}{t_{\ell - 1}} \right)^j \e^{-\mu_k (t_{\ell - 1} - s)} \, \text{d} \omega_k(s) \\
& + \int_{t_{\ell - 1}}^{t_\ell} \left( \frac{s}{t_\ell} \right)^j \e^{-\mu_k (t_\ell - s)} \, \text{d} \omega_k(s) \, .
\end{align*}
Once we have the reference solution we can therefore compute $\Tilde{c}^{(m)}_k(t_n)$ on a series of coarser meshes. To get the \textit{mean} square error, we do multiple simulations and compute the sample average of the observed relative error.

We first consider the left-point method. We chose $T = 1$, $\mu_k = 0.1$, and $\lambda_k = 1$. The experiment was run for $\gamma = 0.7, 0.8, 0.9, 1.1, 1.2, 1.3, 1.4, 1.6, 1.7, 1.8, 1.9, 2.0$, and for each $\gamma$ a sample average of $100$ simulations was used to estimate the relative mean square error. We skip $\gamma = 1.0$, since our method is exact in this case, and we skip $\gamma = 1.5$ since the logarithmic term $\delta_{\gamma, h}$ in Corollary \ref{cor:order-0-method} increases the difficulty of estimating the convergence rate. We used a reference mesh with resolution $h = 2^{-17}$ and the rougher meshes have resolutions $2^{-k}$ for $k = 7,8,9,10$. Convergence plots for a selection of the tested $\gamma$'s can be seen in Figure \ref{fig:constant_convergence_plots}. We see that the log-errors are approximately linear in all cases, indicating that we have entered the asymptotic regime. This justifies using a linear regression across the observed errors to approximate the order of convergence; the corresponding regression line has also been plotted in Figure \ref{fig:constant_convergence_plots}. A compilation of the observed regression slopes can be seen in Figure \ref{fig:constant_convergence_plots} alongside the theoretical convergence orders.  

We also test the convergence of the projection method. We again choose $T = 1$, $\mu_k = 1$, and $\lambda_k = 1$. We test $\gamma = 0.9, 1.1, 1.3, 1.5, 1.7, 1.9, 2.1, 2.3, 2.7, 2.9, 3.1, 3.3$, and do $100$ simulations for each $\gamma$. The reference mesh was given resolution $h = 2^{-14}$ and the rougher meshes were given resolutions $2^{-k}$ given by $k = 4,5,6,7$. We limited ourselves to these mesh sizes to avoid issues with machine precision. Convergence plots for a selection of the tested $\gamma$'s can be seen in Figure \ref{fig:constant_convergence_plots}. As in the example with the left-point method the order of convergence was approximated by doing a linear regression across the observed errors; the corresponding regression line can also be seen in Figure \ref{fig:projection_convergence_plots}. As in the previous example, a compilation of the regression slopes can be seen in Figure \ref{fig:projection_convergence_plots} alongside the theoretical orders. 

We see that the observed convergence order well approximate the theoretical convergence order for both the left-point method and the projection method. In Figure \ref{fig:constant_convergence_plots} we see that for the left-point method there is some discrepancy close to the critical point of $\gamma = 1.5$, but it is to be expected that we would have less accuracy close to $\gamma = 1.5$. Indeed an inspection of the {proof of Theorem \ref{thm-convergence}} shows that the constant $C(C_\Pi, t, \gamma)$ blows up as $\gamma$ approaches $m + \frac{3}{2}$. This discrepancy is not visible in Figure \ref{fig:projection_convergence_plots} for the projection method, but this is likely because we tested a sparser selection of $\gamma's$ in this case.

\begin{figure}[p]
     \begin{subfigure}[b]{0.42\textwidth}
         \centering
         \includegraphics[width=\textwidth]{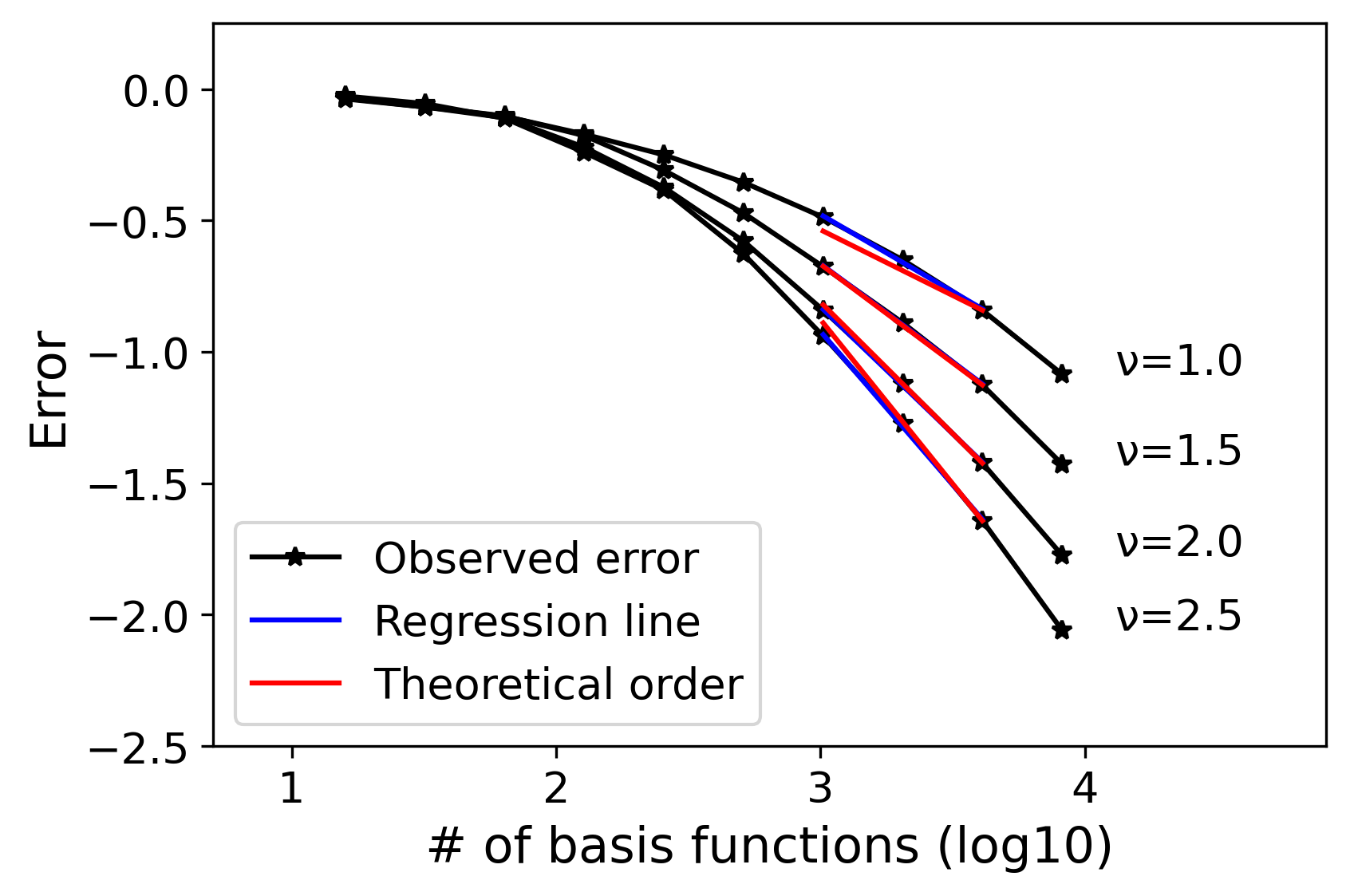}
         \label{fig:constant}
     \end{subfigure}
     \begin{subfigure}[b]{0.42\textwidth}
         \centering
         \includegraphics[width=\textwidth]{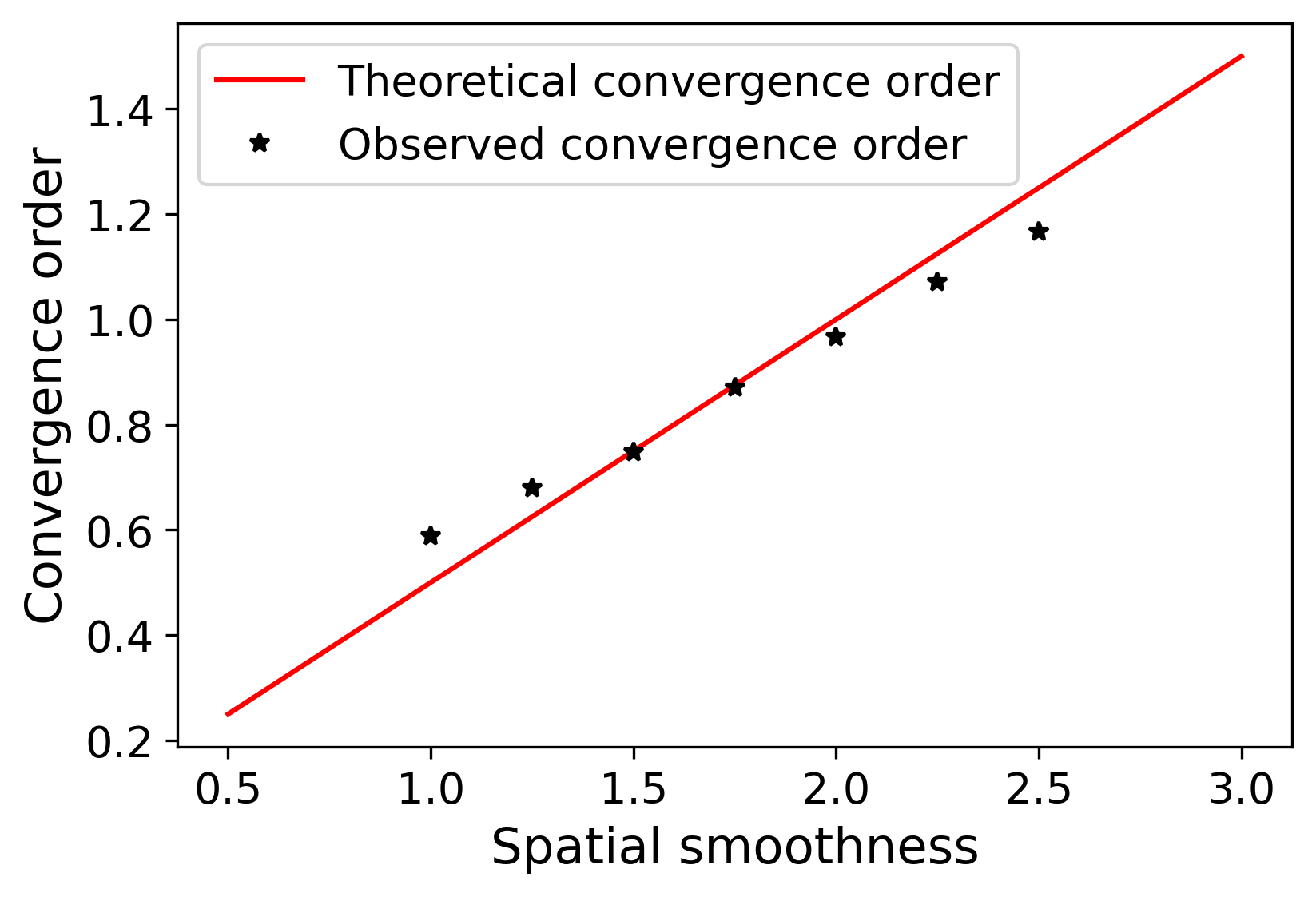}
         \label{fig:constant}
     \end{subfigure}
     \\
     \centering
     \caption{Left: Convergence plots for the relative spectral mean square error for a selection of the values of $\nu_s$. The other parameters have been fixed. The regression line is a linear regression of four of the observed errors. The theoretical order line emanates from the second to right-most point and has slope equal to the theoretical convergence order. Both axes are in $\log_{10}$-scale. Right: Plot of estimated spectral convergence orders. The slope of the linear regressions of four of the observed errors for all the tested values of $\nu_s$ can be seen in black. The other parameters have been fixed. The theoretical convergence order is also plotted.}
     \label{fig:spectral_convergence_plots}
\end{figure}

\begin{figure}[p!]
     \begin{subfigure}[p]{0.43\textwidth}
         \includegraphics[width=\textwidth]{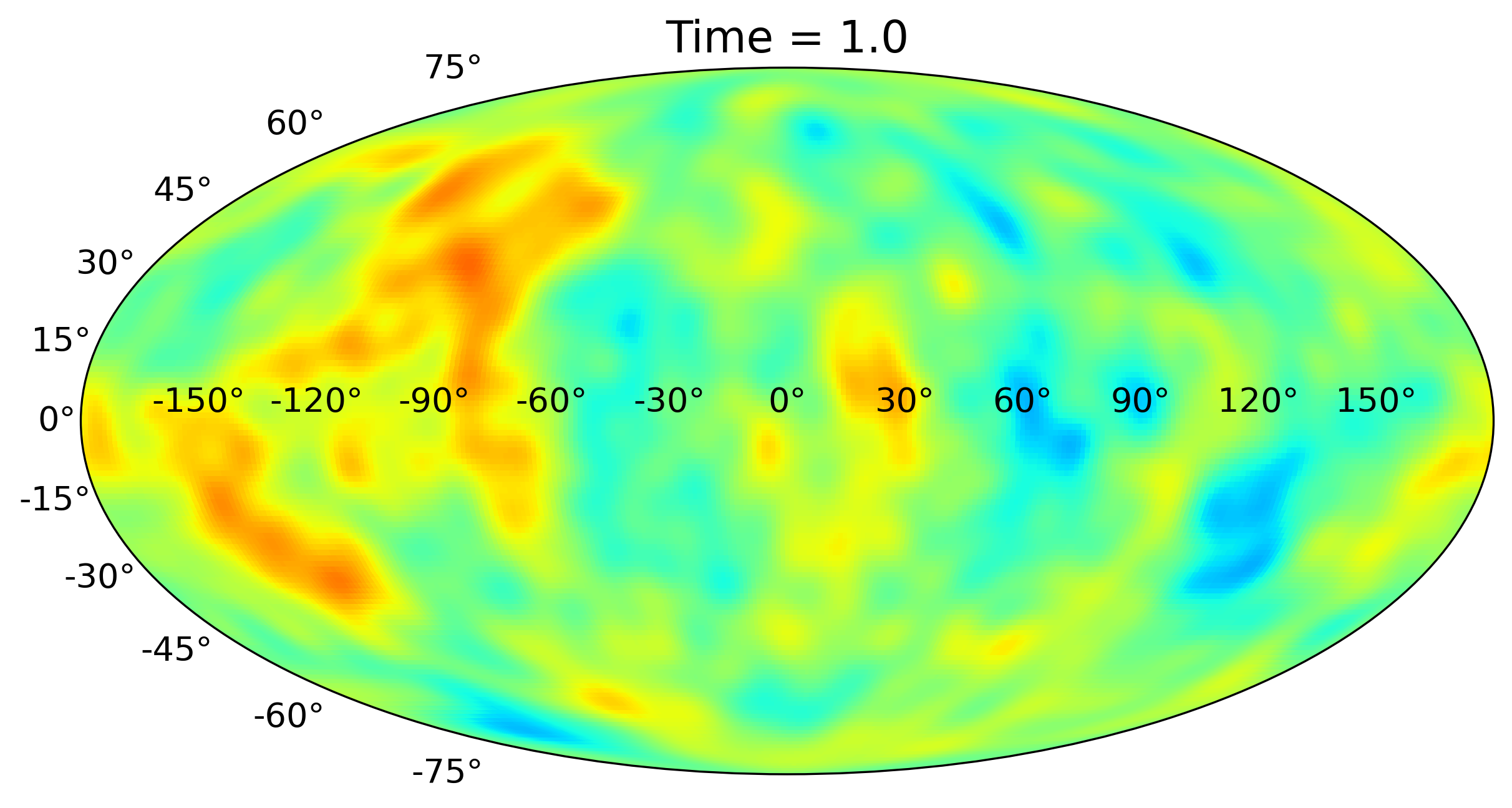}
     \end{subfigure}
     \begin{subfigure}[p]{0.43\textwidth}
         \includegraphics[width=\textwidth]{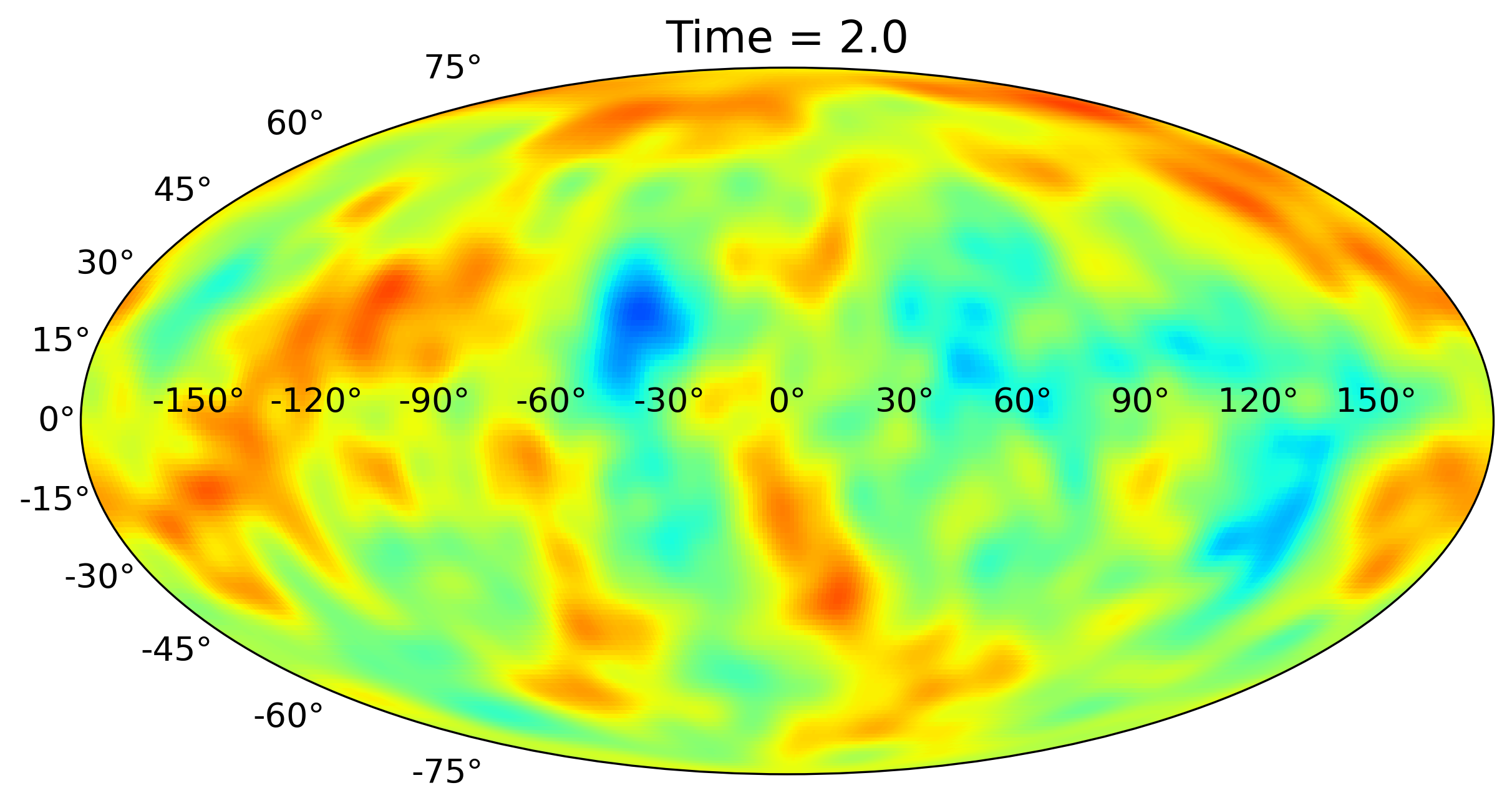}
     \end{subfigure}
     \\
     \hspace*{0.06\textwidth}
     \begin{subfigure}[b]{0.43\textwidth}
         \includegraphics[width=\textwidth]{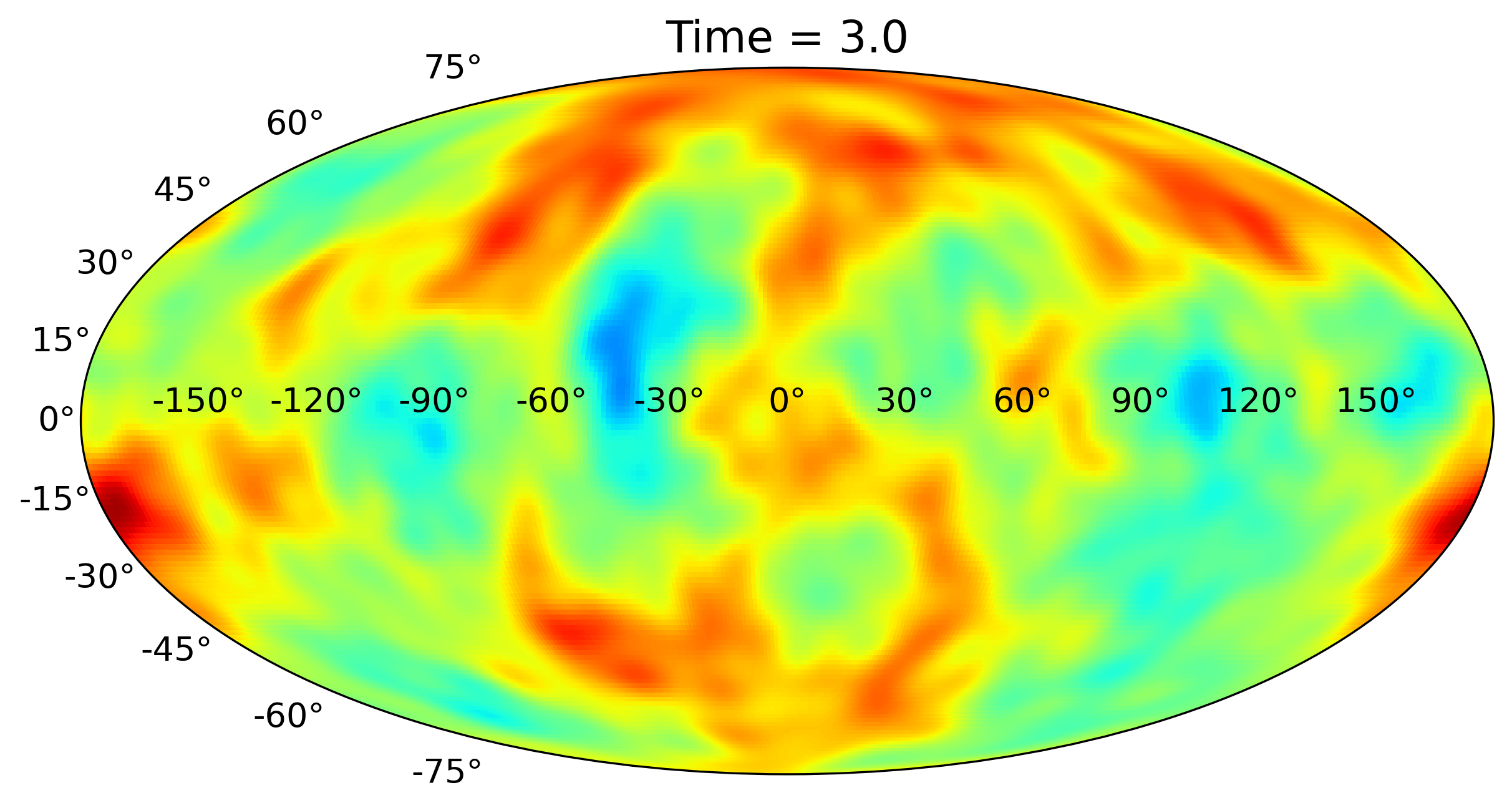}
     \end{subfigure}
     \begin{subfigure}[b]{0.49\textwidth}
         \includegraphics[width=\textwidth]{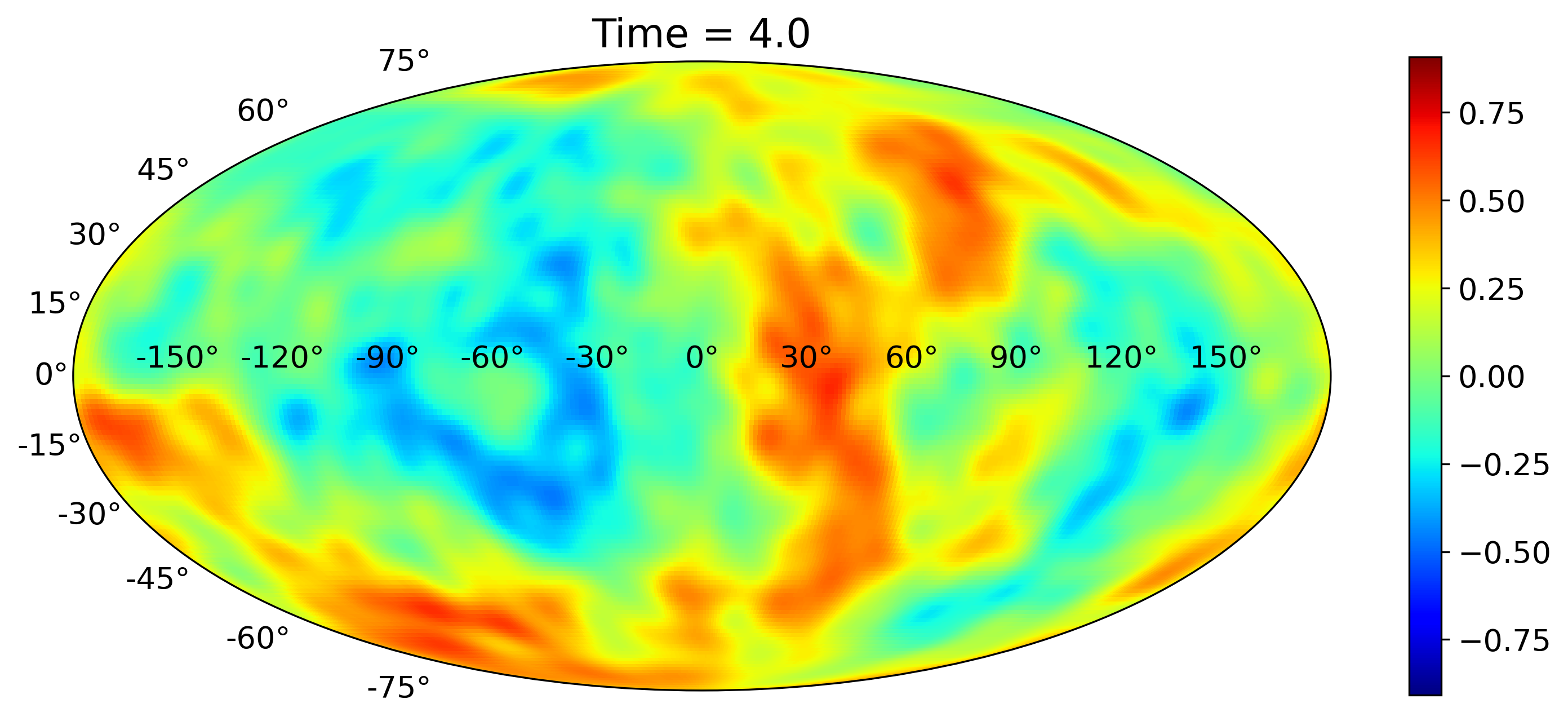}
     \end{subfigure}
     \\
     \caption{A simulation of (\ref{eq:stochastic-convolution}) with the operators in (\ref{eq:operators}) on the unit sphere with $\alpha = 0.5$, $\beta = 1$, $\gamma = 1.5$, $\kappa = 2.828$, $r = 10.0$, and $\sigma = 10$. The plots use a Mollweide projection.}\label{fig:sphere_sim}
     \vspace{1.3cm}
\end{figure}

\begin{figure}[p]
    \raisebox{1.4\height}{
        \centering
        \begin{tabular}{||c|c||}
            \hline
            $\gamma$ & $1.250$ \\
            $\alpha$ & $0.500$ \\
            $\beta$ & $0.750$ \\
            $r$ & $0.816$ \\
            $\kappa$ & $2.000$ \\
            \hline
        \end{tabular}
    }
     \begin{subfigure}[b]{0.3\textwidth}
         \centering
         \includegraphics[width=\textwidth]{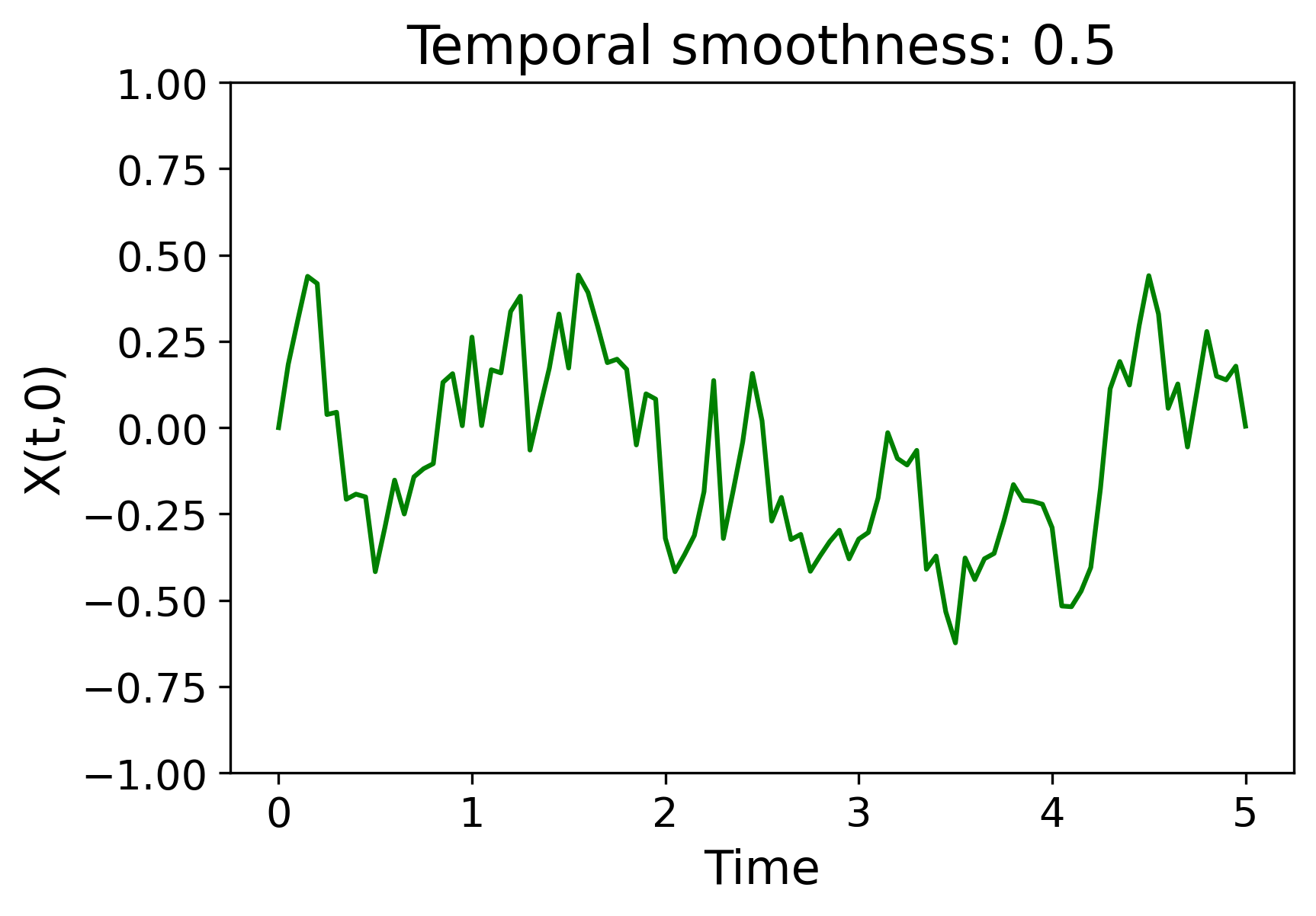}
     \end{subfigure}
     \begin{subfigure}[b]{0.3\textwidth}
         \centering
         \includegraphics[width=\textwidth]{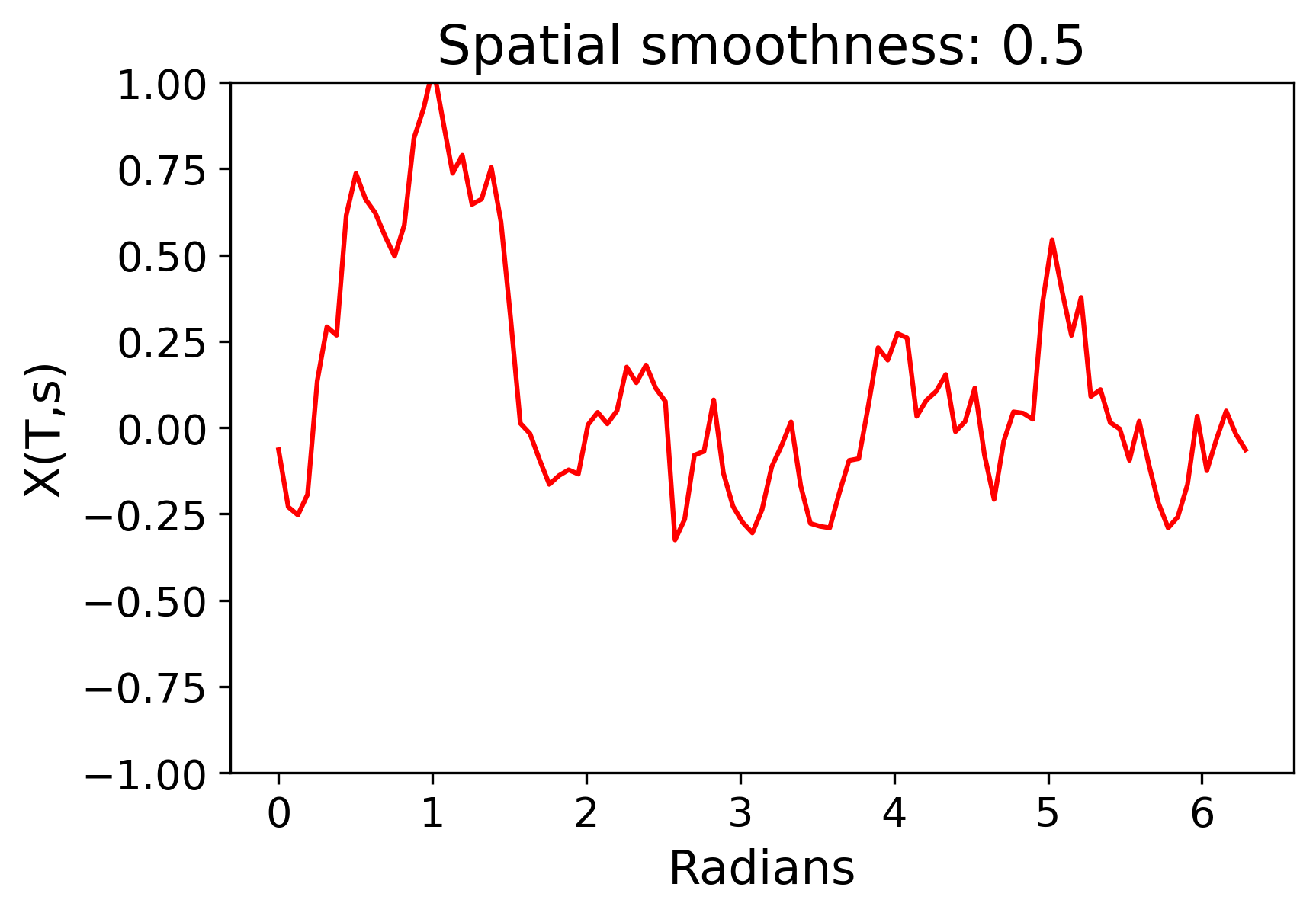}
     \end{subfigure}\\
    \raisebox{1.4\height}{
        \centering
        \begin{tabular}{||c|c||}
            \hline
            $\gamma$ & $2.750$ \\
            $\alpha$ & $0.167$ \\
            $\beta$ & $0.750$ \\
            $r$ & $0.297$ \\
            $\kappa$ & $2.000$ \\
            \hline
        \end{tabular}
    }
     \begin{subfigure}[b]{0.3\textwidth}
         \centering
         \includegraphics[width=\textwidth]{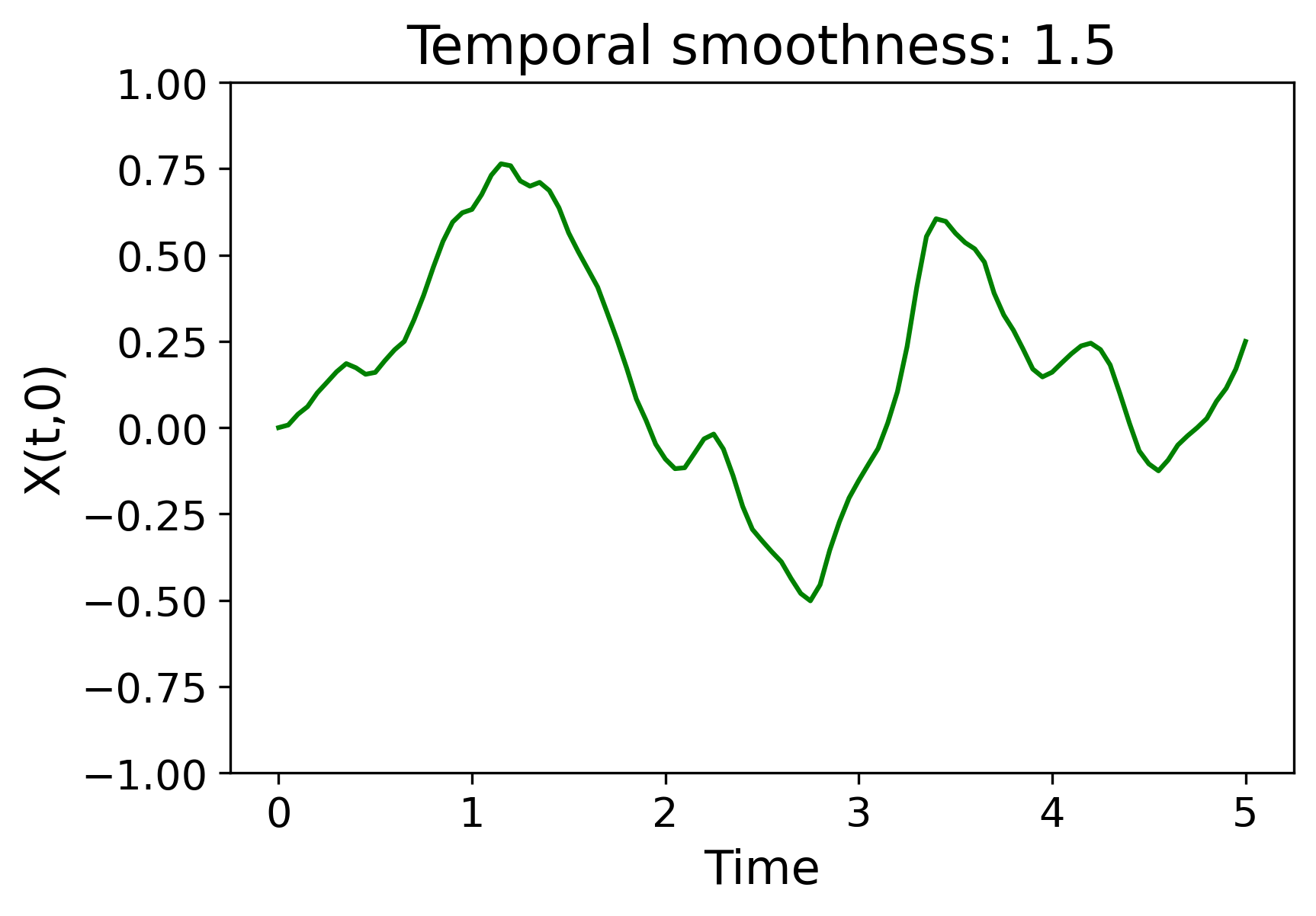}
     \end{subfigure}
     \begin{subfigure}[b]{0.3\textwidth}
         \centering
         \includegraphics[width=\textwidth]{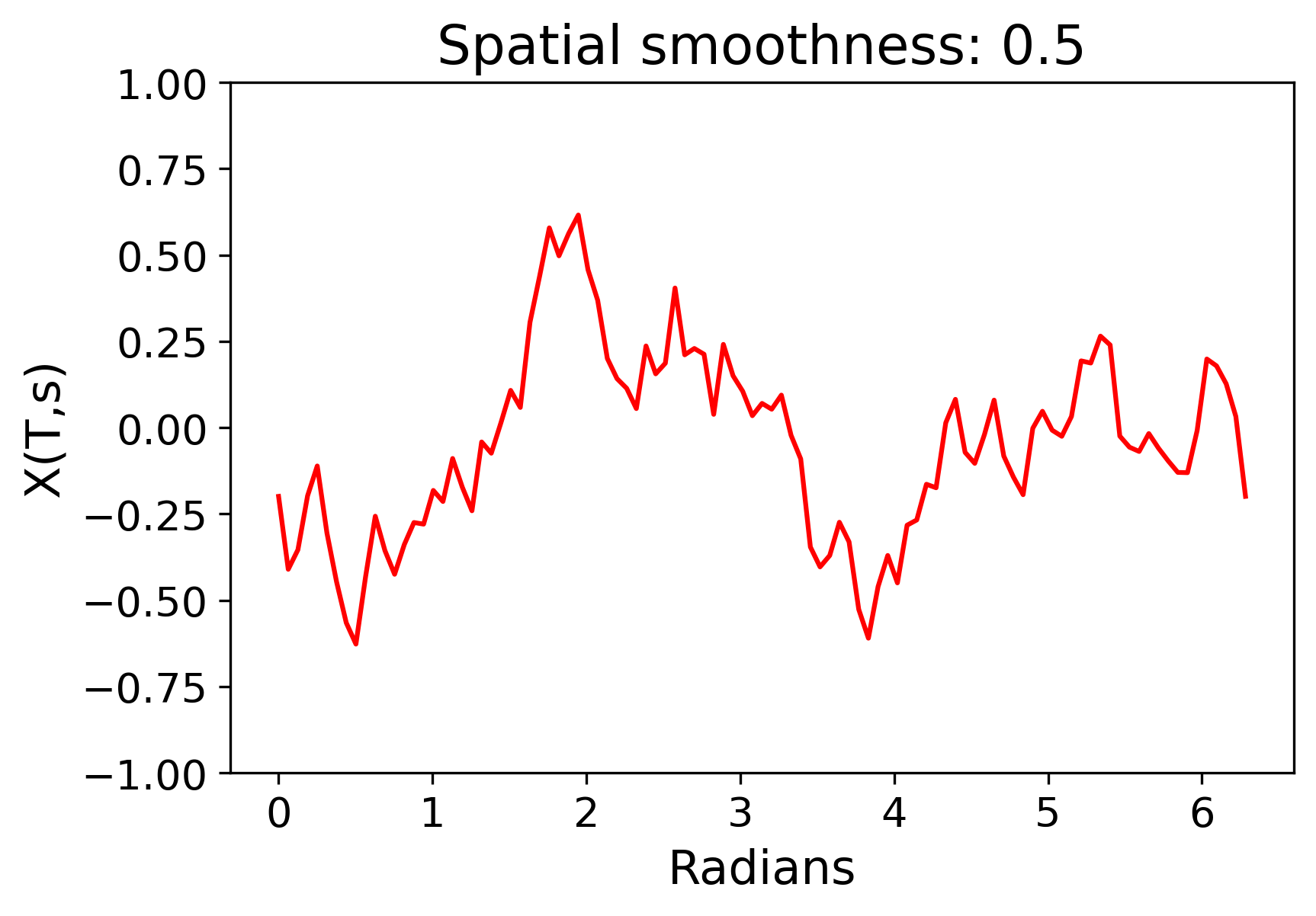}
     \end{subfigure}\\
    \raisebox{1.4\height}{
        \centering
        \begin{tabular}{||c|c||}
            \hline
            $\gamma$ & $2.000$ \\
            $\alpha$ & $0.417$ \\
            $\beta$ & $1.250$ \\
            $r$ & $0.813$ \\
            $\kappa$ & $3.464$ \\
            \hline
        \end{tabular}
    }
     \begin{subfigure}[b]{0.3\textwidth}
         \centering
         \includegraphics[width=\textwidth]{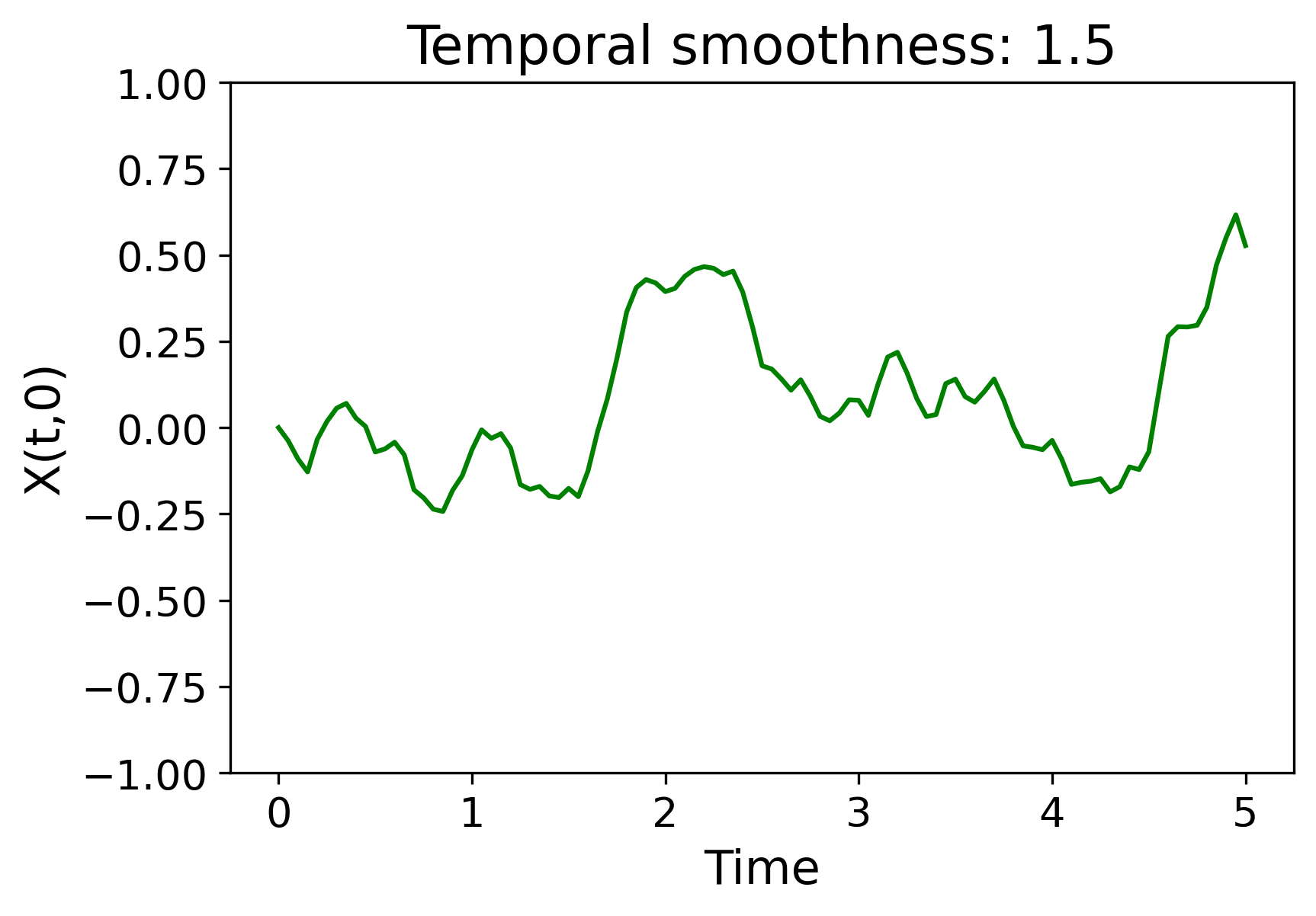}
     \end{subfigure}
     \begin{subfigure}[b]{0.3\textwidth}
         \centering
         \includegraphics[width=\textwidth]{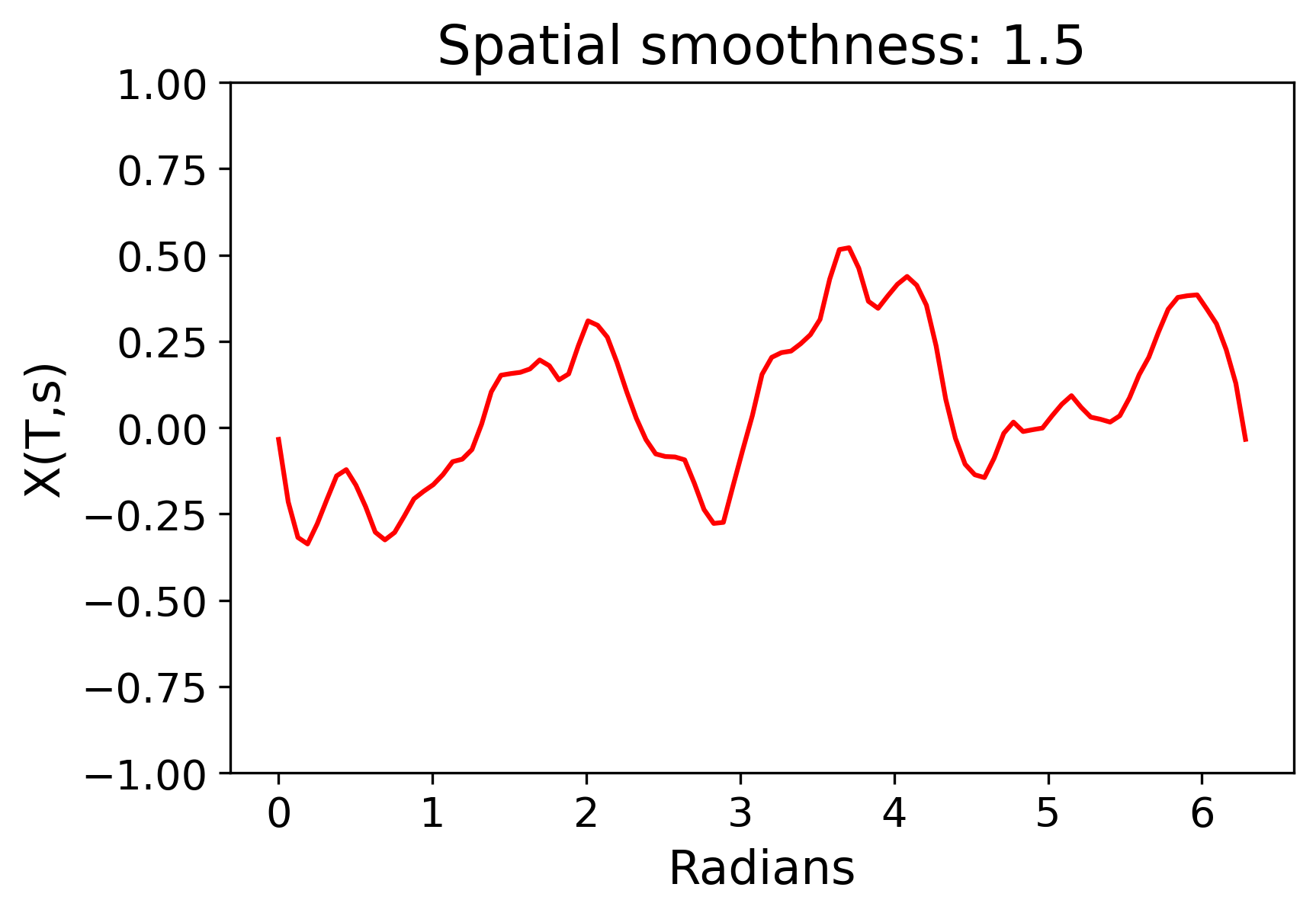}
     \end{subfigure}\\
    \raisebox{1.4\height}{
        \centering
        \begin{tabular}{||c|c||}
            \hline
            $\gamma$ & $3.000$ \\
            $\alpha$ & $0.250$ \\
            $\beta$ & $1.250$ \\
            $r$ & $0.416$ \\
            $\kappa$ & $3.464$ \\
            \hline
        \end{tabular}
    }
     \begin{subfigure}[b]{0.3\textwidth}
         \centering
         \includegraphics[width=\textwidth]{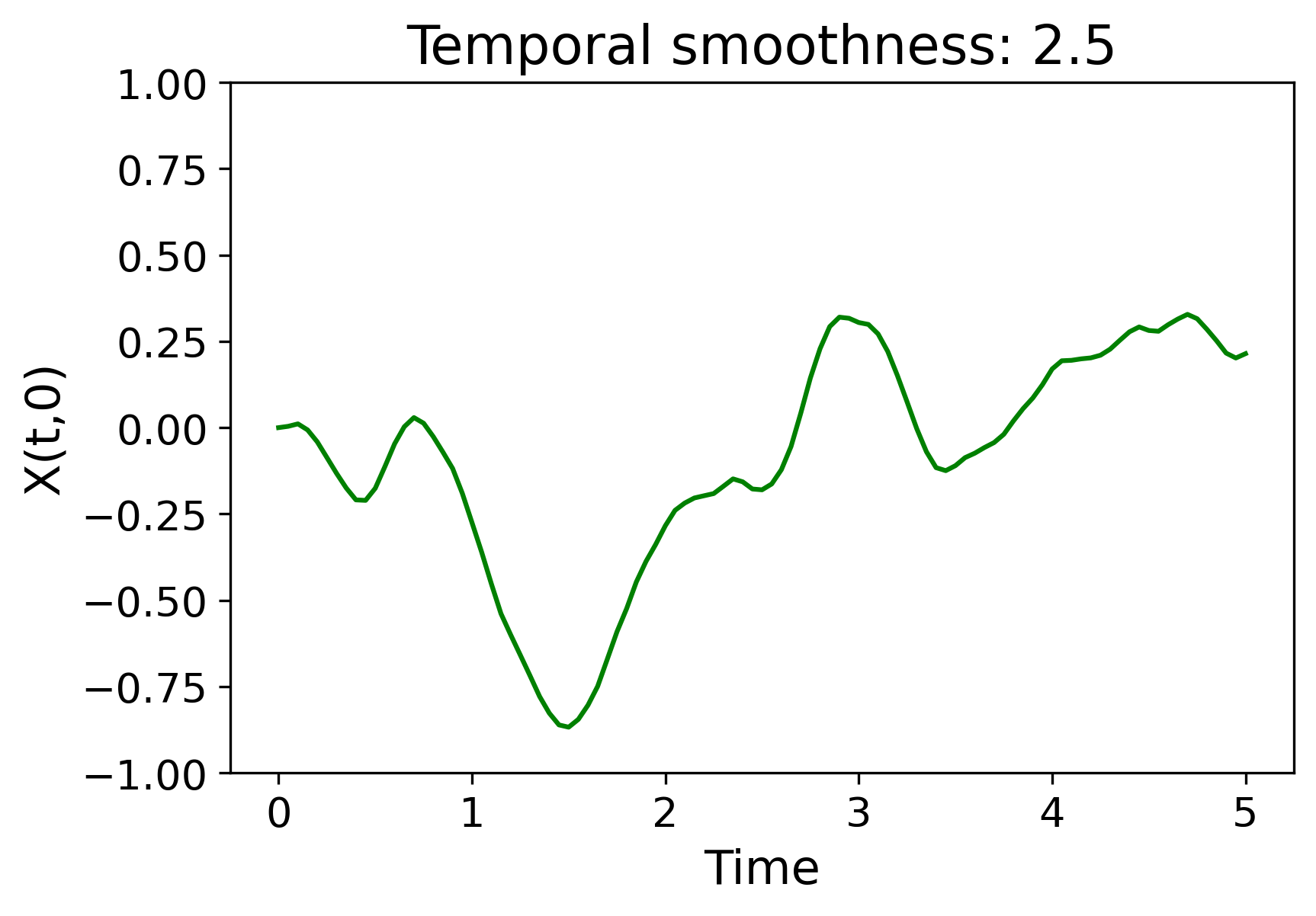}
     \end{subfigure}
     \begin{subfigure}[b]{0.3\textwidth}
         \centering
         \includegraphics[width=\textwidth]{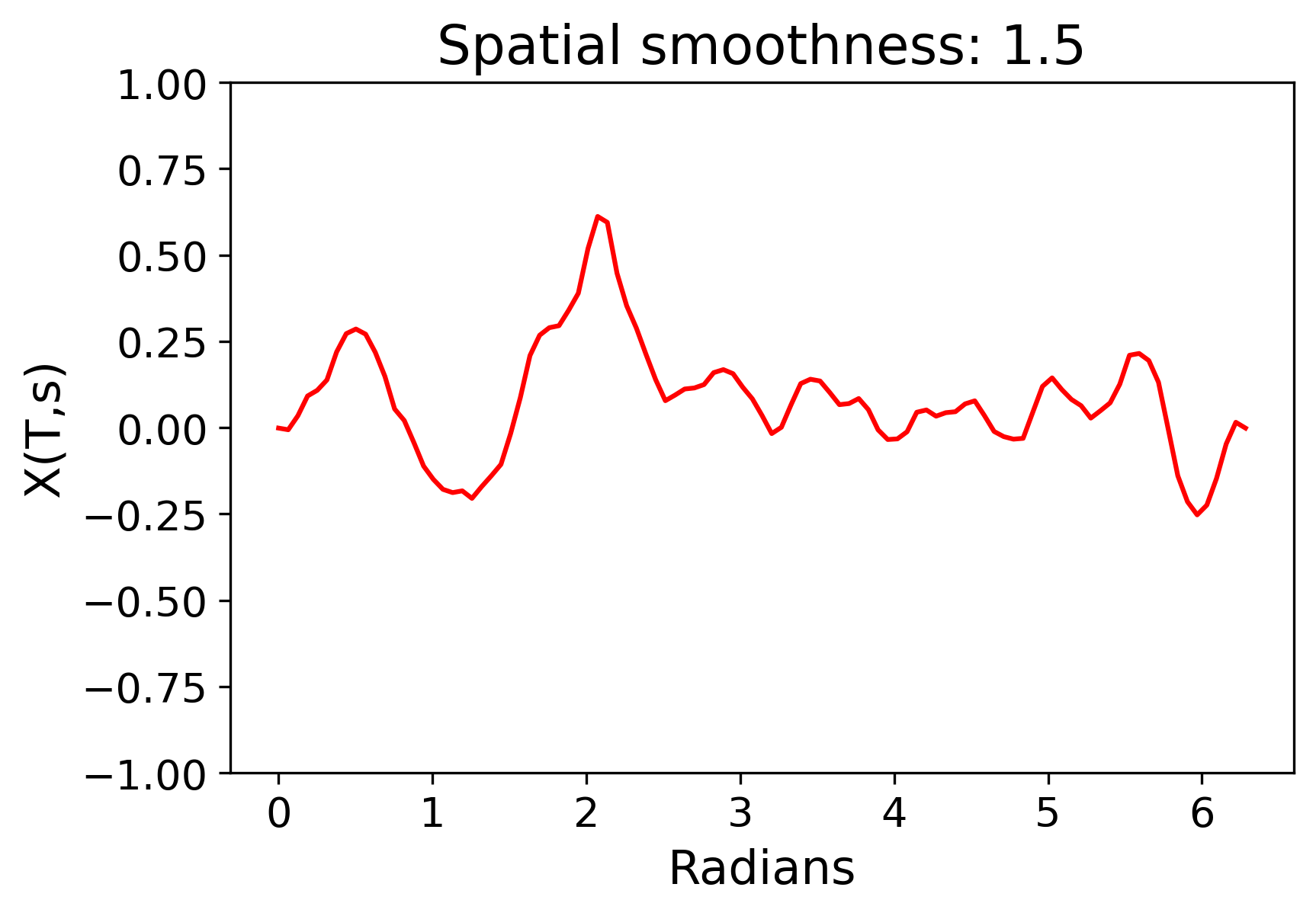}
     \end{subfigure}\\
     \caption{Realisations of temporal and spatial traces for a selection of parameter values. Each row is from the same simulation, containing a list of the chosen parameter values (left), a plot of the temporal trace (middle), and a plot of the spatial trace (right).}
     \label{fig:smoothness-plots}
\end{figure}

\subsection{Testing the convergence of the spectral truncation} 

We also test the spectral convergence rate in Corollary \ref{cor:laplacian-spectral-convergence}. This is computationally more challenging than the previous two examples, since it requires us to also do a temporal discretisation with $h$ small enough so that the temporal error does not interfere with our approximation of the spectral error, as per the discussion in Section \ref{sec:main-result}. To minimise this issue we use the projection method with $h = 2^{-7}$ as our method of discretisation. The temporal domain is $[0,10]$ and errors are evaluated at $T = 1$. The spatial domain is $[0,1]^2$ and we use $A = r^{-1}(\kappa^2 - \Delta)^{\alpha}$ and $Q = \sigma^2 r^{-2\gamma}(\kappa^2 - \Delta)^{-\beta}$. We fix {$\nu_t = 1.0$}, $r_t = 5.0$, $r_s = 0.1$, $\beta_s = 0.5$, and vary $\nu_s = 1.00, 1.25, 1.50, 1.75, 2.00, 2.25, 2.50$. According to Corollary \ref{cor:laplacian-spectral-convergence} the convergence order of the spectral method should then be {$\frac{\nu_s}{2}$}. $2^{14}$ spectral basis functions were used to simulate a reference solution. The relative $\mathcal{L}^2$-error when using $2^L$ basis functions for $L = 1,2,...,13$ was then approximated by comparison to the reference solution. We have only done one simulation per $\nu_s$, as this proved to be sufficient. In Figure \ref{fig:spectral_convergence_plots} we can see convergence plots for a selection of these values of $\nu_s$; the spectral convergence order has been estimated by doing a linear regression of four of the observed errors. We have excluded the last error from the regression because in cases with low $\nu_s$, $L = 13$ is not much more accurate than $L=14$, which makes the computed error dubious. We see that the linear regression over these four points seems to approximate the convergence rate well in most cases, with the possible exception of $\nu_s = 2.5$, where the last four points might have been a better choice. A plot of the linear regression slopes can be seen in Figure \ref{fig:spectral_convergence_plots} and we see that the observed convergence orders well approximate the one predicted by Corollary \ref{cor:laplacian-spectral-convergence}.

\subsection{Simulations on the unit sphere} 

We have also applied our method to simulate a realization of (\ref{eq:fractional-time-heat}) on the unit sphere. We use $[0,5]$ as temporal domain, $1024$ spatial basis functions, and the projection method with $h = 0.1$, to simulate the spectral coefficient processes. We have used $A = r^{-1}(\kappa^2 - \Delta)^{\alpha}$ and $Q = \sigma^2 r^{-2\gamma}(\kappa^2 - \Delta)^{-\beta}$, with $\alpha = 0.5$, $\beta = 1$, $\gamma = 1.5$, $\kappa = 2.828$, $r = 10.0$, and $\sigma = 10$. This gives $\nu_t = 1$, $\nu_s = 1$, $r_t = 5.0$, $r_s = 1.0$, and $\beta_s = 0.5$. A plot of the realisation for half-integer values of $t$ using a Mollweide projection can be seen in Figure \ref{fig:sphere_sim}. To demonstrate our ability to separately control the spatial and temporal smoothness we also simulate temporal and spatial traces on the sphere for a selection of parameters with $4096$ spatial basis functions. The simulated traces can be seen in Figure \ref{fig:smoothness-plots}. The temporal trace is the trace at one point on the sphere, while the spatial trace is the trace along a great circle on the sphere at $t = 5$. While it is impossible to directly assess the smoothness of a function by computing only a finite number of point values, {a visual inspection of the graphs of the functions} gives us an informal indication that the curve becomes more regular as we increase the corresponding parameters.

\newpage

\section{Proofs} \label{sec:proofs}

{
\begin{proof}[Proof of Theorem \ref{thm:spectral-convergence}] Let $N > M$. Clearly $\Tilde{X}_N(t) - \Tilde{X}_M(t) = \sum_{k = M + 1}^{N} c_k(t) e_k$. Using Lemma \ref{lemma:coefficient-representation}, the Itô isometry, the definition of the gamma function, Assumption \ref{assumption:operator}, and $1 + r_\lambda - (2 \gamma-1) r_\mu < 0$, we get 
\begin{align*}
    \EV{\norm{\Tilde{X}_N(t) - \Tilde{X}_M(t)}^2} &= \EV{ \left\| \sum_{k = M + 1}^{N} c_k(t) e_k \right\|^2 } \\
    & = \EV{ \sum_{k = M + 1}^{N} \left\|\frac{\sqrt{\lambda_k}}{\Gamma(\gamma)} \int_0^t \e^{-\mu_k (t-s)} (t-s)^{\gamma - 1} \text{d} \omega_k(s) e_k \right\|^2 } \\
    &= \frac{1}{\Gamma(\gamma)^2} \sum_{k = M + 1}^{N} \lambda_k \int_0^t \e^{-2 \mu_k (t-s)} (t-s)^{2 \gamma - 2} \, \dint s \\
    &= \frac{1}{\Gamma(\gamma)^2} \sum_{k = M + 1}^{N}  \frac{\lambda_k}{(2 \mu_k)^{2 \gamma - 1}} \int_0^{2 \mu_k t} \e^{-u} u^{2 \gamma - 2} \, \dint u \\
    &\leq \frac{\Gamma(2 \gamma - 1)}{\Gamma(\gamma)^2} \sum_{k = M + 1}^{N} \lambda_k \mu_k^{1 - 2 \gamma} \\
    &\leq \frac{C_2}{C_1^{2\gamma-1}} \frac{\Gamma(2 \gamma - 1)}{\Gamma(\gamma)^2} \sum_{k = M + 1}^{N} k^{r_\lambda - (2 \gamma-1) r_\mu} \\
    &\leq  \frac{C_2 C_1^{1 - 2\gamma} \Gamma(2 \gamma - 1)}{\Gamma(\gamma)^2 ((2 \gamma-1) r_\mu - r_\lambda - 1)} M^{1 + r_\lambda - (2 \gamma-1) r_\mu} \, ,
\end{align*}
so $\{X_M(t)\}_{M = 1}^\infty$ is a Cauchy sequence in $\mathcal{L}^2(\Omega, \mathcal{L}^2(\mathcal{D}))$. It follows that $X(t) = \sum_{k = 1}^\infty c_k(t) e_k \in \mathcal{L}^2(\Omega, \mathcal{L}^2(\mathcal{D}))$. By taking the limit as $N \rightarrow \infty$ we obtain
\begin{align*}
    \EV{\norm{X(t) - \Tilde{X}_M(t)}^2} & \leq  \frac{C_2 C_1^{1 - 2\gamma} \Gamma(2 \gamma - 1)}{\Gamma(\gamma)^2 ((2 \gamma-1) r_\mu - r_\lambda - 1)} M^{1 + r_\lambda - (2 \gamma-1) r_\mu} \, . \qedhere
\end{align*} 
\end{proof}
}

To prove Theorem \ref{thm-convergence} we first need a preliminary lemma.

\begin{lem} \label{lemma:function_approximation} Let $t \in [0,T]$, $\mu_k > 0$, $\gamma > \frac{1}{2}$, and ${\rho} \in \mathcal{L}^2([0,t))$. Let $\{\omega_k(s)\}_{s \in [0,T]}$ be an $\mathbb{R}$-valued, standard Wiener-process. Then
$$\EV{\abs{ \int_0^t \e^{- \mu_k (t-s)} ((t-s)^{\gamma - 1} - {\rho}(s)) \text{d} \omega_k(s)}^2} \leq \norm{(t - s)^{\gamma - 1} - {\rho}(s)}_{\mathcal{L}^2([0,t))}^2 \, .$$
In addition for $p \in (2, \infty]$ we have that 
\begin{align*}
\EV{\abs{ \int_0^t \e^{- \mu_k (t-s)} ((t-s)^{\gamma - 1} - {\rho}(s)) \text{d} \omega_k(s)}^2} \\ \leq \left( q \mu_k \right)^{-\frac{2}{q}} \norm{(t-s)^{\gamma - 1} - {\rho}(s)}_{\mathcal{L}^p([0,t))}^2 \, ,
\end{align*}
where $q \in [2,\infty)$ is such that $\frac{1}{p} + \frac{1}{q} = \frac{1}{2}$.

\end{lem}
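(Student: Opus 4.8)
The plan is to bound both inequalities via the Itô isometry followed by an $\mathcal{L}^2$ or $\mathcal{L}^p$--$\mathcal{L}^q$ estimate, treating the exponential weight $\e^{-\mu_k(t-s)}$ as the quantity that supplies the decay. First I would apply the Itô isometry~\cite[Theorem 4.27]{daprato2014} to convert the expected square of the stochastic integral into the deterministic $\mathcal{L}^2([0,t))$-norm of the integrand, giving
\begin{equation*}
\EV{\abs{\int_0^t \e^{-\mu_k(t-s)}\bigl((t-s)^{\gamma-1}-p(s)\bigr)\dint\omega_k(s)}^2} = \int_0^t \e^{-2\mu_k(t-s)}\abs{(t-s)^{\gamma-1}-p(s)}^2\dint s .
\end{equation*}
For the first inequality I would simply use $\e^{-2\mu_k(t-s)}\leq 1$ for all $s\in[0,t)$ and $\mu_k>0$, which immediately yields the claimed bound by $\norm{(t-s)^{\gamma-1}-p(s)}_{\mathcal{L}^2([0,t))}^2$.

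For the second inequality the extra factor $(q\mu_k)^{-2/q}$ must come from extracting integrability of the exponential weight through Hölder's inequality rather than bounding it crudely by $1$. The key step is to split the integrand as the product of $\e^{-2\mu_k(t-s)}$ and $\abs{(t-s)^{\gamma-1}-p(s)}^2$, and apply Hölder with exponents chosen so that the difference term lands in $\mathcal{L}^p$. Concretely, writing $r:=p/2$ for the exponent acting on $\abs{(t-s)^{\gamma-1}-p(s)}^2$ and $r':=q/2$ for its conjugate (since $1/p+1/q=1/2$ forces $2/p+2/q=1$, i.e.\ $1/r+1/r'=1$), Hölder gives
\begin{equation*}
\int_0^t \e^{-2\mu_k(t-s)}\abs{(t-s)^{\gamma-1}-p(s)}^2\dint s \leq \left(\int_0^t \e^{-q\mu_k(t-s)}\dint s\right)^{2/q}\norm{(t-s)^{\gamma-1}-p(s)}_{\mathcal{L}^p([0,t))}^2 .
\end{equation*}
It then remains to evaluate the exponential integral: $\int_0^t \e^{-q\mu_k(t-s)}\dint s \leq \int_0^\infty \e^{-q\mu_k u}\dint u = (q\mu_k)^{-1}$, and raising this to the power $2/q$ produces exactly the factor $(q\mu_k)^{-2/q}$.

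The main obstacle is bookkeeping the conjugate exponents correctly: one must verify that $1/p+1/q=1/2$ translates into the Hölder pair $(p/2,q/2)$ being genuinely conjugate, and ensure the endpoint cases ($p=\infty$, $q=2$, where the $\mathcal{L}^p$-norm becomes the sup-norm and the exponential factor carries the full weight) are handled by the same formula. I would also confirm that the difference $(t-s)^{\gamma-1}-p(s)$ lies in $\mathcal{L}^p([0,t))$ so that the right-hand side is finite; for $\gamma>\tfrac12$ the singular term $(t-s)^{\gamma-1}$ is in $\mathcal{L}^p$ precisely when $p(\gamma-1)>-1$, which the hypotheses of the later theorem will guarantee through the condition $\gamma>m+\tfrac32+\tfrac1q$, but at the level of this lemma the bound is stated as an inequality that holds vacuously if the norm is infinite, so no extra assumption is needed here.
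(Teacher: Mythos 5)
Your proposal is correct and follows essentially the same route as the paper: Itô isometry, then Hölder's inequality to split off the exponential weight, then the elementary bound $\int_0^t \e^{-q\mu_k(t-s)}\,\dint s \leq (q\mu_k)^{-1}$. The only cosmetic difference is that the paper applies the generalized Hölder inequality in norm form, $\norm{fg}_{\mathcal{L}^2} \leq \norm{f}_{\mathcal{L}^q}\norm{g}_{\mathcal{L}^p}$ with $\tfrac1q+\tfrac1p=\tfrac12$, whereas you apply standard Hölder to the squared integrand with the conjugate pair $(p/2, q/2)$ — the same computation phrased two ways.
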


\begin{proof}


By the Itô isometry~\cite[Theorem 4.27]{daprato2014}
 \begin{align*} & \left( \EV{\abs{ \int_0^t \e^{- \mu_k (t-s)} ((t-s)^{\gamma - 1} - {\rho}(s)) \text{d} \omega_k(s)}^2} \right)^{\frac{1}{2}} \\
 = & \, \left( \int_0^t \e^{- 2 \mu_k (t-s)} ((t-s)^{\gamma - 1} - {\rho}(s))^2 \text{d} s \right)^{\frac{1}{2}} \\
= & \,  \norm{\e^{-\mu_k (t-s)} ((t-s)^{\gamma - 1} - {\rho}(s))}_{\mathcal{L}^2([0,t))} \\
\leq & \,  \norm{\e^{-\mu_k (t-s)}}_{\mathcal{L}^q([0,t))} \norm{(t-s)^{\gamma - 1} - {\rho}(s)}_{\mathcal{L}^p([0,t))} \, ,
 \end{align*}
where $\frac{1}{q} + \frac{1}{p} = \frac{1}{2}$ for $p,q \in [2, \infty]$. For $q = \infty$ and $p = 2$ we have $\norm{\e^{-\mu_k (t-s)}}_{\mathcal{L}^q([0,t))} = 1$ and thus 
$$\EV{\abs{ \int_0^t \e^{- \mu_k (t-s)} ((t-s)^{\gamma - 1} - {\rho}(s)) \text{d} \omega_k(s)}^2} \leq  \norm{(t-s)^{\gamma - 1} - {\rho}(s)}_{\mathcal{L}^2([0,t))}^2 \, .$$
For $q < \infty$ we calculate
\begin{align*}
    \norm{\e^{-\mu_k (t-s)}}_{\mathcal{L}^q([0,t))} & = \left( \int_0^t \e^{-q \mu_k (t-s)} \, \text{d}s  \right)^{\frac{1}{q}} = \left( \frac{1}{q \mu_k} (1 - \e^{-q \mu_k t}) \right)^{\frac{1}{q}} \leq (q \mu_k)^{-1/q} \, ,
\end{align*}
so that
$$\EV{\abs{ \int_0^t \e^{- \mu_k (t-s)} ((t-s)^{\gamma - 1} - {\rho}(s)) \text{d} \omega_k(s)}^2} \leq \left( q \mu_k \right)^{-\frac{2}{q}} \norm{(t-s)^{\gamma - 1} - {\rho}(s)}_{\mathcal{L}^p([0,t))}^2 \, .$$
\end{proof}

\begin{proof}[Proof of Theorem \ref{thm-convergence}] We divide the proof into four parts. In Part 1 we consider the case where $\gamma > m + \frac{3}{2}$. In Part 2, we prove the second part of the theorem where $\gamma > m + \frac{3}{2} + \frac{1}{q}$. In Part 3 we consider the case $\gamma < m + \frac{3}{2}$. Finally, in Part 4, we consider the case $\gamma = m + \frac{3}{2}$.

\textbf{Part 1:} Assume $\gamma > m + \frac{3}{2}$, then $f_t \big|_{[t_{\ell - 1}, t_\ell] \cap [0,t)} \in \mathcal{H}^{m + 1}([t_{\ell - 1}, t_\ell) \cap [0,t))$, where $\mathcal{H}^{m+1}$ is the $(m+1)$th Sobolev space with $p = 2$. We can then apply the Bramble-Hilbert lemma~\cite[Chapter 3, Remark 2]{strang}, to show that for each $\ell = 1,2,...N$ there exists a polynomial $g_\ell$ of degree $m$ satisfying
$$\norm{f_t - g_\ell}_{\mathcal{L}^2([t_{\ell - 1}, t_\ell])} \leq C \norm{f}_{\mathcal{H}^{m + 1}([t_{\ell - 1}, t_\ell])} (t_\ell - t_{\ell - 1})^{m + 1} \, .$$
Define $g: [0,t) \rightarrow \mathbb{R}$ to be the function that is equal to $g_\ell$ when restricted to $[t_{\ell - 1}, t_{\ell}) \cap [0,t)$. Then $\norm{f_t - g}_{\mathcal{L}^2([0,t))} \leq C \norm{f}_{\mathcal{H}^{m + 1}([0,t))} h^{m + 1}$. Further 
$$\norm{f_t - \Pi_m f_t}_{\mathcal{L}^2([0,t))} \leq \norm{f_t - g}_{\mathcal{L}^2([0,t))} + \norm{g - \Pi_m g}_{\mathcal{L}^2([0,t))} + \norm{\Pi_m (g - f_t) }_{\mathcal{L}^2([0,t))} \, .$$
Since $g \in \mathcal{P}_m(\mathcal{I})$ we have that $\Pi_m g = g$, so that 
\begin{align*}
\norm{f_t - \Pi_m f_t}_{\mathcal{L}^2([0,t))} & \leq (1 + C_\Pi) \norm{f_t - g}_{\mathcal{L}^2([0,t))} \\
& \leq C (1 + C_\Pi)  \norm{f_t}_{\mathcal{H}^{m + 1}([0,t))} h^{m + 1} \, .
\end{align*}
By the first part of Lemma \ref{lemma:function_approximation} 
\begin{align*} 
\left(\EV{\abs{c_k(t) - \Tilde{c}^{(m)}_k(t)}^2} \right)^{\frac{1}{2}} &= \frac{\sqrt{\lambda_k}}{\Gamma(\gamma)} \left(\EV{\abs{\int_0^t \e^{-\mu_k (t-s)} ((t - s)^{\gamma - 1} -  \Pi_m f_t (s)) \, \text{d} \omega_k(s)}}^2\right)^{\frac{1}{2}} \\
& = \frac{\sqrt{\lambda_k}}{\Gamma(\gamma)} \norm{f_t - \Pi_mf_t}_{\mathcal{L}^2([0,t))}  \\
& \leq C \frac{\sqrt{\lambda_k}}{\Gamma(\gamma)} (1 + C_\Pi)  \norm{f_t}_{\mathcal{H}^{m + 1}([0,t))} h^{m + 1}
\end{align*}

\textbf{Part 2:} Assume $\gamma > m + \frac{3}{2} + \frac{1}{q}$, then $\gamma > m + 2 - \frac{1}{p}$, where $\frac{1}{p} + \frac{1}{q} = \frac{1}{2}$. Thus $f_t \big|_{[t_{\ell - 1}, t_\ell] \cap [0,t)} \in \mathcal{W}^{m + 1, p}([t_{\ell - 1}, t_\ell) \cap [0,t))$, where $\mathcal{W}^{m + 1, p}$ is the $(m + 1)$th order Sobolev space, so we can modify the proof by using the Bramble-Hilbert lemma on $\mathcal{L}^{p}([0,t))$ above to get
\begin{align*}
\norm{f_t - \Pi_m f_t}_{\mathcal{L}^p([0,t))} & \leq C (1 + C_\Pi) h^{m + 1} \norm{f_t}_{\mathcal{W}^{m + 1, p}([0,t))} \, .
\end{align*}
Thus by the second part of Lemma \ref{lemma:function_approximation} 
\begin{align*} 
\left(\EV{\abs{c_k(t) - \Tilde{c}^{(m)}_k(t)}^2} \right)^{\frac{1}{2}} & \leq \frac{\sqrt{\lambda_k} \mu_k^{-\frac{1}{q}}}{\Gamma(\gamma)} \norm{f_t -  \Pi_m f_t}_{\mathcal{L}^p([0,t))}  \\
& \leq C \frac{\sqrt{\lambda_k}\mu_k^{-\frac{1}{q}}}{\Gamma(\gamma)} (1 + C_\Pi^p) \norm{f_t}_{\mathcal{W}^{m + 1, p}([0,t))}  h^{m + 1}.
\end{align*}

\textbf{Part 3:} Assume $\gamma < m + \frac{3}{2}$. Let $\epsilon > 0$ and define $T_m(s) := \sum_{k = 0}^{m} \frac1{k!} f_t^{(k)}(t - \epsilon) (s - t + \epsilon)^k$; the Taylor polynomial expansion of $f_t$ around $t - \epsilon$. Further we define $f_t^\epsilon: [0,t) \rightarrow \mathbb{R}$ as 
\begin{equation*}
f_t^\epsilon(s) := \begin{cases}
    (t - s)^{\gamma - 1}, \quad & \text{if} \quad s < t - \epsilon,\\
    T_m(s), \quad & \text{else.}
\end{cases} 
\end{equation*}
We then consider the approximation 
$$\norm{f_t - g}_{\mathcal{L}^2([0,t))} \leq \norm{f_t - f_t^\epsilon}_{\mathcal{L}^2([0,t))} + \norm{f_t^\epsilon - g}_{\mathcal{L}^2([0,t))} = (I) + (II) \, ,$$
for some $g \in \mathcal{P}_m(\mathcal{I})$. For $s \in [t - \epsilon, t)$ the Schlömilch form of the Taylor remainder is $f_t(s) - f_t^\epsilon(s) = f_t(s) - T_m(s) = \frac{f_t^{(m + 1)}(\xi)}{p m!} (s - \xi)^{m + 1 - p} (s - t + \epsilon)^p$, where $\xi \in (t - \epsilon, s)$ and $p > 0$~\cite[p.66]{Beesack1966}. The $(m+1)$th derivative of $f_t$ is $f_t^{(m + 1)}(s) = K_{m + 1} (t - s)^{\gamma - 2 - m}$, where $K_{m + 1} := (-1)^{m + 1} \Pi_{i = 1}^{m + 1}(\gamma - i)$. We now fix $\delta > 0$ and select $p = \gamma - \frac{1}{2} - \frac{\delta}{2}$, where $\delta \in (0, 1)$ is chosen such that $p > 0$ as required. We then obtain the following expression for the error $f_t(s) - f_t^\epsilon(s)$:
\begin{align*}
f_t(s) - f_t^\epsilon(s) & = \frac{K_{m + 1} (t - \xi)^{\gamma - 2 -m}}{p m!} (s - \xi)^{m + 1 - p} (s - t + \epsilon)^p \\
& = \frac{K_{m + 1} (t - \xi)^{\gamma - 2 -m }}{(\gamma - \frac{1}{2} - \frac{\delta}{2}) m!} (s - \xi)^{m + \frac{3}{2} - \gamma + \frac{\delta}{2}} (s - t + \epsilon)^{\gamma - \frac{1}{2} - \frac{\delta}{2}} \\
& \leq \frac{K_{m + 1} (t - \xi)^{\gamma - 2 -m }}{(\gamma - \frac{1}{2} - \frac{\delta}{2}) m!} (t - \xi)^{m + \frac{3}{2} - \gamma + \frac{\delta}{2}} \epsilon^{\gamma - \frac{1}{2} - \frac{\delta}{2}} \\
& = \frac{K_{m + 1} }{(\gamma - \frac{1}{2} - \frac{\delta}{2}) m!} (t - \xi)^{- \frac{1}{2} + \frac{\delta}{2}} \epsilon^{\gamma - \frac{1}{2} - \frac{\delta}{2}} \\
& \leq \frac{K_{m + 1} }{(\gamma - \frac{1}{2} - \frac{\delta}{2}) m!} (t - s)^{- \frac{1}{2} + \frac{\delta}{2}} \epsilon^{\gamma - \frac{1}{2} - \frac{\delta}{2}}
\end{align*}
The bound $(s - t + \epsilon)^{\gamma - \frac{1}{2} - \frac{\delta}{2}} < \epsilon^{\gamma - \frac{1}{2} - \frac{\delta}{2}}$ is justified by $s - t + \epsilon < t - t + \epsilon = \epsilon$ and the fact that $\gamma - \frac{1}{2} - \frac{\delta}{2} = p > 0$. The bound $(s - \xi)^{m + \frac{3}{2} - \gamma + \frac{\delta}{2}} < (t - \xi)^{m + \frac{3}{2} - \gamma + \frac{\delta}{2}}$ is justified by $s - \xi < t - \xi$ and the fact that $m + \frac{3}{2} - \gamma + \frac{\delta}{2} > 0$ since $\gamma - \frac{\delta}{2} < \gamma < m + \frac{3}{2}$ by assumption. Finally $(t - \xi)^{- \frac{1}{2} + \frac{\delta}{2}} < (t - s)^{- \frac{1}{2} + \frac{\delta}{2}}$ is justified by $t - \xi > t - s$ together with the fact that $- \frac{1}{2} + \frac{\delta}{2} < 0$. We are now ready to approximate $(I)$:
\begin{align*}
\norm{f_t - f_t^\epsilon}_{\mathcal{L}^2([0,t))}^2 & \leq \int_{t - \epsilon}^t \left( \frac{K_{m + 1}}{(\gamma - \frac{1}{2} - \frac{\delta}{2}) m!} \right)^2 (t - s)^{-1 + \delta} \epsilon^{2\gamma - 1 - \delta} \, \dint s \\ 
& = \frac{1}{\delta} \left( \frac{K_{m + 1}}{(\gamma - \frac{1}{2} - \frac{\delta}{2}) m!} \right)^2 \epsilon^{2 \gamma - 1} \, .
\end{align*}
If we optimize for the smallest possible constant in terms of $\delta$ we get the bound 
$$(I) = \norm{f_t - f_t^\epsilon}_{\mathcal{L}^2([0,t))} \leq  \frac{C_{op}(\gamma) K_{m + 1}}{m!} \epsilon^{\gamma - \frac{1}{2}} \, ,$$
where 
\begin{equation*} 
C_{op}(\gamma) := \begin{cases}
    \sqrt{\frac{27}{8}} (\gamma - \frac{1}{2})^{-\frac{3}{2}}, \quad \text{if} \quad \frac{1}{2} < \gamma \leq 2\\
    (\gamma -  1)^{-1}, \quad \text{if} \quad \gamma > 2
\end{cases} \, .
\end{equation*}
Further the $(m+1)$th order weak derivative of $f_t^\epsilon$ can be represented by
\begin{equation*}
\frac{\text{d}^{m + 1}}{\text{d} s^{m + 1}} f_t^\epsilon(s) := \begin{cases}
    K_{m + 1} (t - s)^{\gamma - 2 - m}, \quad \text{if} \quad s < t - \epsilon\\
    0, \quad \text{else}
\end{cases} \, ,
\end{equation*}
where $K_{m + 1} := (-1)^{m + 1} \Pi_{i = 1}^{m + 1}(\gamma - i)$. Since $\gamma < m + \frac{3}{2}$ by assumption, it then follows that 
\begin{align*} \norm{f_t^\epsilon}_{\mathcal{H}^{m + 1}([0,t))}^2 & = \int_0^{t - \epsilon} K_{m + 1}^2 (t - s)^{2 \gamma - 2m - 4} \, \text{d} s \\ 
& = \frac{K_{m + 1}^2}{2m + 3 - 2 \gamma} (\epsilon^{2 \gamma - 2 m - 3} - t^{2 \gamma - 2 m - 3}) \\ 
& \leq \frac{K_{m + 1}^2}{2m + 3 - 2 \gamma} \epsilon^{2 \gamma - 2 m - 3} < \infty \, ,
\end{align*}
where the last bound is possible due to the fact that $2m + 3 - 2 \gamma > 0$ since by assumption $\gamma < m + \frac{3}{2}$. Thus $f_t^\epsilon \in \mathcal{H}^{m + 1}([0,t))$. We can therefore apply the Bramble-Hilbert lemma as we did above to approximate $f_t^\epsilon$ by a piecewise polynomial function $g^\epsilon \in \mathcal{P}_m(\mathcal{I})$. Choosing $g = g^\epsilon$ we can therefore approximate $(II)$ by
\begin{align*}
    (II) = \norm{f_t^\epsilon - g^\epsilon}_{\mathcal{L}^2([0,t))} & \leq C h^{m + 1} \norm{f_t^\epsilon}_{\mathcal{H}^{m + 1}([0,t))}\\
    & \leq \frac{C \abs{K_{m + 1}}}{(2m + 3 - 2 \gamma)^{\frac{1}{2}}} \epsilon^{\gamma - m - \frac{3}{2}} h^{m + 1}\, .
\end{align*}
Putting $(I)$ and $(II)$ together and selecting $\epsilon = h$ we then get
\begin{align*} \norm{f_t - g^\epsilon}_{\mathcal{L}^2([0,t))} & \leq \norm{f_t - f_t^\epsilon}_{\mathcal{L}^2([0,t))} + \norm{f_t^\epsilon - g^\epsilon}_{\mathcal{L}^2([0,t))} \\
& \leq \abs{K_{m + 1}} \left(\frac{C_{op}(\gamma)}{m!} + \frac{C}{(2m + 3 - 2 \gamma)^{\frac{1}{2}}} \right) h^{\gamma - \frac{1}{2}} \, .
\end{align*}
Further we can approximate
\begin{align*}
\norm{f_t - \Pi_m f_t}_{\mathcal{L}^2([0,t))} & \leq (1 + C_\Pi) \norm{f_t - g^\epsilon}_{\mathcal{L}^2([0,t))} \\
& \leq (1 + C_\Pi) K_{m + 1} \left( \frac{C_{op}(\gamma)}{m!} + \frac{C}{(2m + 3 - 2 \gamma)^{\frac{1}{2}}} \right) h^{\gamma - \frac{1}{2}} \, .
\end{align*}
Thus again by Lemma \ref{lemma:function_approximation} 
\begin{align*} 
\left(\EV{\abs{c_k(t) - \Tilde{c}^{(m)}_k(t)}^2} \right)^{\frac{1}{2}} & \leq \frac{\sqrt{\lambda_k}}{\Gamma(\gamma)} \norm{f_t - \Pi_m f_t}_{\mathcal{L}^2([0,t))}  \\
& \leq \frac{\sqrt{\lambda_k}}{\Gamma(\gamma)} (1 + C_\Pi) \abs{K_{m + 1}}  \left( \frac{C_{op}(\gamma)}{m!} + \frac{C}{(2m + 3 - 2 \gamma)^{\frac{1}{2}}} \right) h^{\gamma - \frac{1}{2}}
\end{align*}

\textbf{Part 4:} Assume $\gamma = m + \frac{3}{2}$. We then repeat the proof for Case 3, but for $(II)$ we calculate
\begin{align*} \norm{f_t^\epsilon}_{\mathcal{H}^{m + 1}([0,t))}^2 & = \int_0^{t - \epsilon} K_{m + 1}^2 (t - s)^{-1} \, \text{d} s \\ 
& = K_{m + 1}^2 \log(t \epsilon^{-1}) < \infty \, ,
\end{align*}
so that $(II) = \norm{f_t^\epsilon - g^\epsilon}_{\mathcal{L}^2([0,t))} \leq C \abs{K_{m + 1}} \sqrt{\log(t \epsilon^{-1})} h^{m + 1}$. Thus, putting $(I)$ and $(II)$ together and selecting $h = \epsilon$, we get
\begin{align*} \norm{f_t - g^\epsilon}_{\mathcal{L}^2([0,t))} & \leq \norm{f_t - f_t^\epsilon}_{\mathcal{L}^2([0,t))} + \norm{f_t^\epsilon - g^\epsilon}_{\mathcal{L}^2([0,t))} \\
& \leq \abs{K_{m + 1}} \left(\frac{C_{op}(\gamma)}{m!} + C \sqrt{\log(t h^{-1})}\right) h^{m + 1} \\
& \leq \abs{K_{m + 1}} \max \left(\frac{C_{op}(\gamma)}{m!}, C\right) \left(1 +\sqrt{\log(t h^{-1})} \right) h^{m + 1} \, ,
\end{align*}
so that
\begin{align*} 
\left(\EV{\abs{c_k(t) - \Tilde{c}^{(m)}_k(t)}^2} \right)^{\frac{1}{2}} & \leq \frac{\sqrt{\lambda_k}}{\Gamma(\gamma)} \norm{f_t - \Pi_m f_t}_{\mathcal{L}^2([0,t))}  \\
& \leq \frac{\sqrt{\lambda_k}}{\Gamma(\gamma)} (1 + C_\Pi) \abs{K_{m + 1}} \max \left(\frac{C_{op}(\gamma)}{m!}, C\right) \left(1 +\sqrt{\log(t h^{-1})} \right) h^{m + 1} \, ,\end{align*}
\end{proof}

\begin{proof}[Proof of Lemma \ref{lem:projection-computation}] First observe that 
\begin{align*} \int_a^b s^k (t - s)^{\gamma - 1} \dint s &= \int_a^b (t - (t - s))^k (t - s)^{\gamma - 1} \dint s \\
&= \sum_{j = 0}^k \binom{k}{j} (-1)^j t^{k - j} \int_a^b (t - s)^j (t - s)^{\gamma - 1} \dint s \\
&= \sum_{j = 0}^k \binom{k}{j} \frac{(-1)^j t^{k - j}}{\gamma + j} \left[ (t - a)^{\gamma + j} - (t - b)^{\gamma + j} \right] = i_k \\
\end{align*}
Since $\{q_j \}_{j = 0}^m$ is a basis for $\mathbb{P}_m([a,b))$ we can express $p$ as $\sum_{j = 0}^m \beta_j q_j$. We can then calculate 
\begin{align*} \beta_k = \int_a^b p q_k \dint s = \int_a^b (t - s)^{\gamma - 1} q_k \dint s = \sum_{j = 0}^m q_k^{(j)} \int_a^b s^j (t - s)^{\gamma - 1} \dint s = \sum_{j = 0}^m q_k^{(j)} i_j \, .
\end{align*}
Then 
$$p = \sum_{k = 0}^m \beta_k q_k = \sum_{k = 0}^m \left(\sum_{l = 0}^m q_k^{(l)} i_l\right) \left(\sum_{j = 0}^m q_k^{(j)} s^j\right) = \sum_{j = 0}^m \left( \sum_{k = 0}^m \sum_{l = 0}^m q_k^{(l)} q_k^{(j)} i_l \right) s^j =: \sum_{j = 0}^m \alpha_j s^j \, ,$$
so that $\alpha_j = \sum_{k = 0}^m \sum_{l = 0}^m q_k^{(l)} q_k^{(j)} i_l$.
\end{proof}

We now turn to proving Lemma \ref{lem:boundedness-p0}. To prove this result, we first need another lemma.

\begin{lem} \label{lem:boundedness-p0-inequalities} Let $\Tilde{s}_\ell = t_{\ell - 1} + \theta (t_\ell - t_{\ell - 1})$, $\alpha > 0$, and ${\eta}: = \frac{\max_\ell(h_\ell)}{\min_\ell(h_\ell)}$. Fix $t_n \in \mathcal{I}$. If $\theta \in [0,1]$ then for all $0 < \ell < n$ we have that
\begin{align*}(t_n - \Tilde{s}_\ell)^\alpha & \leq (t_n - t_{\ell - 1})^{\alpha} \leq {\eta}^\alpha \left( \frac{2}{2 - \theta} \right)^\alpha (t_n - \Tilde{s}_\ell)^\alpha \\
(t_n - t_\ell)^\alpha & \leq (t_n - \Tilde{s}_\ell)^{\alpha} \leq {\eta}^\alpha \left( 2 - \theta \right)^\alpha (t_n - t_\ell)^\alpha
\end{align*}
Consequently, under stated assumptions, for all $\alpha \in \mathbb{R}$ 
\begin{align*}(t_n - \Tilde{s}_\ell)^\alpha & \lesssim (t_n - t_{\ell - 1})^{\alpha} \lesssim (t_n - \Tilde{s}_\ell)^\alpha \\
(t_n - t_\ell)^\alpha & \lesssim (t_n - \Tilde{s}_\ell)^{\alpha} \lesssim (t_n - t_\ell)^\alpha
\end{align*}

\begin{proof} We first assume that $\theta \in [0,1]$ and that $0 < \ell < n$. It is trivial that $(t_n - \Tilde{s}_\ell)^\alpha \leq (t_n - t_{\ell - 1})^{\alpha}$ and $(t_n - t_\ell)^\alpha \leq (t_n - \Tilde{s}_\ell)^{\alpha}$. Since $\ell < n$, then $t_n - t_\ell > 0$, so that
\begin{align*} \left( \frac{t_n - \Tilde{s}_\ell}{t_n - t_\ell} \right)^\alpha = \left(\frac{(1-\theta) h_\ell + \sum_{i = \ell + 1}^{n} h_i}{\sum_{i = \ell + 1}^{n} h_i} \right)^\alpha \leq {\eta}^\alpha \left(\frac{1 - \theta + n - \ell}{n - \ell} \right)^\alpha \leq {\eta}^\alpha \left( 2 - \theta \right)^\alpha \, .
\end{align*}
It follows that $(t_n - \Tilde{s}_\ell)^{\alpha} \leq {\eta}^\alpha \left( 2 - \theta \right)^\alpha (t_n - t_\ell)^\alpha$. Similarly, since $\ell < n$, then $t_n - \Tilde{s}_\ell > 0$, so that
\begin{align*} \left( \frac{t_n - t_{\ell - 1}}{t_n - \Tilde{s}_\ell} \right)^\alpha = \left( \frac{\sum_{i = \ell}^{n} h_i}{(1-\theta) h_\ell + \sum_{i = \ell + 1}^{n} h_i} \right)^\alpha \leq {\eta}^\alpha \left(\frac{n - \ell + 1}{ 1 - \theta + n - \ell} \right)^\alpha \leq {\eta}^\alpha \left( \frac{2}{2 - \theta} \right)^\alpha \, .
\end{align*}
It follows that $(t_n - t_{\ell - 1})^{\alpha} \leq {\eta}^\alpha \left( \frac{2}{2 - \theta} \right)^\alpha (t_n - \Tilde{s}_\ell)^\alpha$. The conclusion follows.
\end{proof}

\end{lem}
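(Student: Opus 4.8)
The plan is to reduce everything to elementary comparisons of telescoping sums of the mesh widths $h_i$, treating the two chains symmetrically and handling the power $\alpha$ by monotonicity of $x \mapsto x^\alpha$. First I would record the basic ordering: since $\theta \in [0,1]$, the intermediate node satisfies $t_{\ell - 1} \leq \Tilde{s}_\ell \leq t_\ell$, and since $\ell < n$ all three quantities $t_n - t_{\ell-1}$, $t_n - \Tilde{s}_\ell$, $t_n - t_\ell$ are strictly positive. Because $x \mapsto x^\alpha$ is increasing on $(0,\infty)$ for $\alpha > 0$, the two left-hand (``trivial'') inequalities $(t_n - \Tilde{s}_\ell)^\alpha \leq (t_n - t_{\ell-1})^\alpha$ and $(t_n - t_\ell)^\alpha \leq (t_n - \Tilde{s}_\ell)^\alpha$ follow immediately from this ordering.

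For the two nontrivial upper bounds I would write each difference as a sum of mesh widths:
\begin{align*}
t_n - t_\ell = \sum_{i = \ell+1}^n h_i, \quad t_n - \Tilde{s}_\ell = (1-\theta)h_\ell + \sum_{i=\ell+1}^n h_i, \quad t_n - t_{\ell-1} = h_\ell + \sum_{i=\ell+1}^n h_i,
\end{align*}
noting that the block $\sum_{i=\ell+1}^n h_i$ contains exactly $n - \ell$ terms. To bound the ratio $(t_n - \Tilde{s}_\ell)/(t_n - t_\ell)$ I would bound its numerator above by $(n - \ell + 1 - \theta)\max_i h_i$ and its denominator below by $(n-\ell)\min_i h_i$; factoring out $\overline{k} = \max_i h_i/\min_i h_i$ leaves $\overline{k}\,(n - \ell + 1 - \theta)/(n-\ell)$. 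Since $\ell < n$ forces the integer $n - \ell$ to be at least $1$, and $(n-\ell+1-\theta)/(n-\ell) = 1 + (1-\theta)/(n-\ell)$ is decreasing in $n - \ell$, its maximum is attained at $n - \ell = 1$, giving the clean constant $\overline{k}(2-\theta)$; raising to the power $\alpha$ yields the second chain. The first chain is entirely analogous: the ratio $(t_n - t_{\ell-1})/(t_n - \Tilde{s}_\ell)$ is bounded by $\overline{k}\,(n-\ell+1)/(n-\ell+1-\theta)$, and $(m+1)/(m+1-\theta)$ is again decreasing in $m = n-\ell \geq 1$, so its maximum $2/(2-\theta)$ occurs at $m=1$, producing the factor $\overline{k}\,(2/(2-\theta))$.

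Finally, for the ``consequently'' statement with arbitrary $\alpha \in \mathbb{R}$, the case $\alpha > 0$ is exactly what was just proved, with explicit implied constants $\overline{k}^\alpha(2-\theta)^\alpha$ and $\overline{k}^\alpha(2/(2-\theta))^\alpha$; the case $\alpha = 0$ is trivial; and for $\alpha < 0$ I would apply the $\alpha > 0$ inequalities with exponent $-\alpha$ and take reciprocals, which reverses each inequality and interchanges the two ends of every chain while only replacing the constants by their reciprocals. In every case the implied constants depend only on $\theta$, $\alpha$, and $\overline{k}$, matching the dependence asserted in the statement.

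I expect the only real obstacle to be the bookkeeping required to land on the sharp stated constants rather than a cruder bound. In particular, the step that legitimately factors $\overline{k}$ out of the ratio of two sums needs the shared block $\sum_{i=\ell+1}^n h_i$ to be bounded from opposite sides in the numerator and denominator simultaneously, and the monotonicity argument must be carried out carefully to pin the worst case at $n - \ell = 1$. Everything beyond this is routine.
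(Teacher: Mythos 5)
Your proposal is correct and follows essentially the same route as the paper's proof: the same trivial monotonicity bounds, the same telescoping decomposition into mesh widths, and the same bound on the ratios via $\overline{k}\,(1-\theta+n-\ell)/(n-\ell)$ and $\overline{k}\,(n-\ell+1)/(1-\theta+n-\ell)$ maximised at $n-\ell=1$. You additionally spell out the monotonicity-in-$(n-\ell)$ step and the reciprocal argument for $\alpha<0$ in the ``consequently'' part, both of which the paper leaves implicit, but these are elaborations rather than a different approach.
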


\begin{proof}[Proof of Lemma \ref{lem:boundedness-p0}] Let $h(s) = g(s) + \alpha f_{t_n}(s)$, where $g \in \mathcal{P}_0(\mathcal{I})$, and $f_t(s) = (t - s)^{\gamma - 1}$. Then $g$ is of the form $g(s) = \sum_{\ell = 1}^{N} g_\ell \mathbb{I}_{[t_{\ell - 1}, t_\ell)}(s)$, so that
$$\norm{g + \alpha f_{t_n}}_{\mathcal{L}^2([0,t_n))}^2 = \sum_{\ell = 1}^n \left[ g_\ell^2 (t_\ell - t_{\ell - 1}) + 2 \alpha g_\ell \int_{t_{\ell - 1}}^{t_\ell} f_{t_n}(s) \, \text{d}s + \alpha^2 \int_{t_{\ell - 1}}^{t_\ell} f_{t_n}^2 \, \text{d}s  \right] \, .$$
Also 
$$\norm{\Pi_0(g + \alpha f_t)}_{\mathcal{L}^2([0,t_n))}^2 = \sum_{\ell = 1}^n (t_\ell - t_{\ell - 1}) \left[ g_\ell^2 + 2 \alpha g_\ell (t - \Tilde{s}_\ell)^{\gamma - 1} + \alpha^2 (t - \Tilde{s}_\ell)^{2\gamma - 2} \right] \, ,$$
where $\Tilde{s}_\ell := t_{\ell - 1} + \theta (t_\ell - t_{\ell - 1})$. We now consider
\begin{align*} 2 \alpha g_\ell (t_n - \Tilde{s}_\ell)^{\gamma - 1} (t_\ell - t_{\ell - 1}) &= 2 \alpha g_\ell ((t_n - \Tilde{s}_\ell)^{\gamma - 1}(t_n - t_{\ell - 1}) - (t_n - \Tilde{s}_\ell)^{\gamma - 1}(t_n - t_{\ell})) = (*)
\end{align*}
If $\ell < n$, then by Lemma \ref{lem:boundedness-p0-inequalities} we have that 
$$(*) \lesssim 2 \alpha g_\ell ((t_n - t_{\ell - 1})^{\gamma} - (t_n - t_\ell)^{\gamma}) = 2 \alpha g_\ell \gamma \int_{t_{\ell - 1}}^{t_\ell} f_{t_n}(s) \, \text{d}s\, .$$
If $n = \ell$, then $t_n - t_{\ell} = 0$ and thus 
$$(*) = 2 \alpha g_\ell (1 - \theta)^{\gamma - 1} (t_n - t_{n - 1})^{\gamma} =  2 \alpha g_\ell (1 - \theta)^{\gamma - 1} \gamma \int_{t_{n - 1}}^{t_n} f_{t_n}(s) \, \text{d}s\, .$$
Note that if $\gamma < 1$, then we must assume $\theta \neq 1$, else $(1 - \theta)^{\gamma - 1}$ will be undefined. We can use an almost identical argument to show that $\alpha^2 (t - \Tilde{s}_\ell)^{2\gamma - 2} \lesssim \alpha^2 \int_{t_{\ell - 1}}^{t_\ell} f_t(s)^2 \, \text{d}s$. Thus, if $\theta \in [0,1]$ and $\gamma \leq 1$, or if $\theta \in [0,1)$, we have that
\begin{align*}
\norm{\Pi_0(g + \alpha f_{t_n})}_{\mathcal{L}^2([0,t_n))}^2 & = \sum_{\ell = 1}^n (t_\ell - t_{\ell - 1}) \left[ g_\ell^2 + 2 \alpha g_\ell (t_n - \Tilde{s}_\ell)^{\gamma - 1} + \alpha^2 (t_n - \Tilde{s}_\ell)^{2\gamma - 2} \right] \\
& \leq C_{\Pi}^2 \sum_{\ell = 1}^n \left[ g_\ell^2 (t_\ell - t_{\ell - 1}) + 2 \alpha g_\ell \int_{t_{\ell - 1}}^{t_\ell} f_{t_n}(s) \, \text{d}s + \alpha^2 \int_{t_{\ell - 1}}^{t_\ell} f_{t_n}(s)^2 \, \text{d}s  \right] \\
& = C_{\Pi}^2 \norm{g + \alpha f_{t_n}}_{\mathcal{L}^2([0,t_n))}^2 \, ,
\end{align*}
where $C_\Pi$ depends on $\gamma$, $\theta$ and ${\eta} = \frac{\max_\ell(h_\ell)}{\min_\ell(h_\ell)}$. 
\end{proof}

\begin{proof}[Proof of Corollary \ref{cor:opt-rate-balancing}]
From {Theorem \ref{thm:main-result}} we know that the error $\epsilon \lesssim M^{-\frac{2 \nu}{d}} + C(M, \gamma) h^b$, where $b := \min(2 \gamma - 1, 2 m + 2)$ and
$$C(M, \gamma) = \begin{cases}
    \sum_{k = 1}^M \lambda_k & \text{when} \, \gamma \in (\frac{1}{2}, m + \frac{3}{2}] \, , \\
    \sum_{k = 1}^M \lambda_k \mu_k^{-2 \gamma + 2m + 3} & \text{when} \, \gamma \in (m + \frac{3}{2}, m + 2] \, , \\
    \sum_{k = 1}^M \lambda_k \mu_k^{-1} & \text{when} \, \gamma \in (m + 2, \infty) \, .
\end{cases} \, .$$
Note that $C(M, \gamma) \lesssim \sum_{k = 1}^M k^{\delta - 1}$ by {Assumption \ref{assumption:operator}}. If {$\delta < 0$} this sum converges, so $C(M, \gamma) \lesssim 1$. If $0 < \delta < 1$, then 
$$C(M, \gamma) \lesssim 1 + \int_1^M k^{\delta - 1} = 1 - \frac{1}{\delta} + \frac{1}{\delta} M^\delta \lesssim M^\delta \, .$$ 
If $\delta = 0$, then similarly $C(M, \gamma) \lesssim 1 + \ln(M) \lesssim \ln(M)$. Finally if $\delta \geq 1$, then 
$C(M, \gamma) \lesssim \sum_{k = 1}^M k^{\delta - 1} \leq \sum_{k = 1}^M M^{\delta - 1} = M^\delta$. Thus
$$\epsilon \lesssim \begin{cases}
    M^{-\frac{2 \nu}{d}} + M^\delta h^b & \text{when} \, \delta > 0 \, , \\
    M^{-\frac{2 \nu}{d}} + \ln(M) h^b & \text{when} \, \delta = 0 \, , \\
    M^{-\frac{2 \nu}{d}} + h^b & \text{when} \, \delta < 0 \, , \\
\end{cases} \, .$$
Note that we can always achieve the optimal rate $M^{-\frac{2 \nu}{d}}$ by setting $h = M^{-\zeta}$ and choosing $\zeta$ appropriately. The optimal choice of $\zeta$ is given by
$$\zeta = \begin{cases}
    \frac{\delta}{b} + \frac{2 \nu}{d b}& \text{when} \, \delta > 0 \, , \\
    \frac{2 \nu}{d b} & \text{when} \, \delta \leq 0 \, , \\
\end{cases} \, .$$
though note that in the case $\delta = 0$ this choice of $\zeta$ only gives us the optimal rate up to $\ln(M)$. To compute $\zeta$ we will consider the cases $\gamma \in (\frac{1}{2}, m + \frac{3}{2})$, $\gamma \in (m + \frac{3}{2}, m + 2]$, and {$\gamma \in (m + 2, \infty)$} separately. 
We first consider the case $\gamma \in (\frac{1}{2}, m + \frac{3}{2})$, where $\delta = 1 + r_\lambda $. The above then becomes
\begin{align*}
    \zeta &= \begin{cases}
    \frac{1 + r_\lambda}{2 \gamma - 1} - \frac{1 + r_\lambda - (2\gamma - 1) r_\mu}{2 \gamma - 1} & \text{when} \, \delta > 0 \, , \\
    - \frac{1 + r_\lambda - (2\gamma - 1) r_\mu}{2 \gamma - 1} & \text{when} \, \delta \leq 0 \, , \\
\end{cases} \\ 
&= \begin{cases}
    r_\mu & \text{when} \, \delta > 0 \, , \\
    -\frac{\delta}{b} + r_\mu & \text{when} \, \delta \leq 0 \, , \\
\end{cases} \, .
\end{align*}
Secondly we have the case $\gamma \in (m + \frac{3}{2}, m + 2)$, where $\delta = 1 + r_\lambda - (2 \gamma - 2 m - 3) r_\mu$. The above then becomes
\begin{align*}
    \zeta &= \begin{cases}
    \frac{1 + r_\lambda - (2 \gamma - 2 m - 3) r_\mu}{2 m + 2} - \frac{1 + r_\lambda - (2\gamma - 1) r_\mu}{2 m + 2} & \text{when} \, \delta > 0 \, , \\
    - \frac{1 + r_\lambda - (2\gamma - 1) r_\mu}{2 m + 2} & \text{when} \, \delta \leq 0 \, , \\
\end{cases} \\ 
&= \begin{cases}
    r_\mu & \text{when} \, \delta > 0 \, , \\
    -\frac{\delta}{b} + r_\mu & \text{when} \, \delta \leq 0 \, , \\
\end{cases} \, .
\end{align*}
Finally we have the case $\gamma \in (m + 2, \infty)$, where $\delta = 1 + r_\lambda - r_\mu$. The above then becomes
\begin{align*}
    \zeta &= \begin{cases}
    \frac{1 + r_\lambda - r_\mu}{2 m + 2} - \frac{1 + r_\lambda - (2\gamma - 1) r_\mu}{2 m + 2} & \text{when} \, \delta > 0 \, , \\
    - \frac{1 + r_\lambda - (2\gamma - 1) r_\mu}{2 m + 2} & \text{when} \, \delta \leq 0 \, , \\
\end{cases} \\ 
&= \begin{cases}
    \frac{2 \gamma - 2}{2 m + 2} r_\mu & \text{when} \, \delta > 0 \, , \\
    -\frac{\delta}{b} + \frac{2 \gamma - 2}{2 m + 2} r_\mu & \text{when} \, \delta \leq 0 \, , \\
\end{cases} \, .
\end{align*}
We summarize this as
{\begin{equation*}
    \zeta = \begin{cases}
    r_\mu - \min(0, \frac{\delta}{b}) & \text{when} \, \gamma \leq m + 2 \, , \\
    \frac{2 \gamma - 2}{2 m + 2} r_\mu - \min(0,\frac{\delta}{b}) & \text{when} \,  \gamma > m + 2 \, ,
\end{cases} \, . \qedhere
\end{equation*}}
\end{proof}

\begin{proof}[Proof of Corollary \ref{cor:comp-cost}] 
We know from Corollary \ref{cor:opt-rate-balancing} that if $h = M^{-\zeta}$ then the error $\epsilon \lesssim M^{-\frac{2 \nu}{d}}$, or equivalently $\epsilon^{-\frac{d}{2 \nu}} \gtrsim M$. {Since the complexity is $O(JMN) = O(J M h^{-1})$, we thus have
\begin{equation*}
    \mathcal{C} \lesssim J M h^{-1} = J M^{1 + \zeta} \lesssim J \epsilon^{-\frac{d}{2 \nu}(1 + \zeta)} \, . \qedhere
\end{equation*}}
\end{proof}

\section{Data availability statement} Supporting code is available at https://doi.org/10.5281/zenodo.18659877.

\section{Acknowledgments} I thank my supervisors Espen R. Jakobsen and Geir-Arne Fuglstad for invaluable guidance and insightful discussions.

\section{Declarations} 
\noindent
\textbf{Funding} \, Not applicable.

\noindent
\textbf{Conflicts of interest} \, The author declares no conflict of interest.

\bibliographystyle{siam} 
\bibliography{Manuscript}

@article{kirchner2022,
  title = {Regularity theory for a new class of fractional parabolic stochastic evolution equations},
  volume = {12},
  ISSN = {2194-041X},
  url = {http://dx.doi.org/10.1007/s40072-023-00316-7},
  DOI = {10.1007/s40072-023-00316-7},
  number = {3},
  journal = {Stochastics and Partial Differential Equations: Analysis and Computations},
  publisher = {Springer Science and Business Media LLC},
  author = {Kirchner, K. and Willems, J.},
  year = {2023},
  month = oct,
  pages = {1805–1854}
}

@article{Beesack1966,
  title = {A general form of the remainder in Taylor’s theorem},
  volume = {73},
  ISSN = {0002-9890},
  url = {http://dx.doi.org/10.2307/2313928},
  DOI = {10.2307/2313928},
  number = {1},
  journal = {The American Mathematical Monthly},
  publisher = {JSTOR},
  author = {Beesack,  P.R.},
  year = {1966},
  month = jan,
  pages = {64}
}

@book{daprato2014,
  title = {Stochastic Equations in Infinite Dimensions},
  ISBN = {9781107295513},
  url = {http://dx.doi.org/10.1017/CBO9781107295513},
  DOI = {10.1017/cbo9781107295513},
  publisher = {Cambridge University Press},
  author = {Da Prato,  G. and Zabczyk,  J.},
  year = {2014},
  month = apr 
}

@book{davies1995,
  title = {Spectral Theory and Differential Operators},
  ISBN = {9780511623721},
  url = {http://dx.doi.org/10.1017/CBO9780511623721},
  DOI = {10.1017/cbo9780511623721},
  publisher = {Cambridge University Press},
  author = {Davies,  E.B.},
  year = {1995},
  month = jan 
}

@book{strang,
  title={An analysis of the finite element method},
  author={Strang, G. and Fix, G.J.},
  year={1969},
  publisher={Englewood Cliffs: Prentice-Hall}
}

@article{lindgren2011,
  title = {An explicit link between Gaussian fields and Gaussian Markov random fields: The stochastic partial differential equation approach},
  volume = {73},
  ISSN = {1467-9868},
  url = {http://dx.doi.org/10.1111/j.1467-9868.2011.00777.x},
  DOI = {10.1111/j.1467-9868.2011.00777.x},
  number = {4},
  journal = {Journal of the Royal Statistical Society Series B: Statistical Methodology},
  publisher = {Oxford University Press (OUP)},
  author = {Lindgren, F. and Rue, H. and Lindstr\"{o}m, J.},
  year = {2011},
  month = aug,
  pages = {423–498}
}

@article{LindgrenBakka,
  place = {ES},
  title = {A diffusion-based spatio-temporal extension of Gaussian Matérn fields},
  volume = {48},
  ISSN = {1696-2281,  2013-8830},
  url = {https://doi.org/10.57645/20.8080.02.13},
  DOI = {10.57645/20.8080.02.13},
  number = {1},
  journal = {SORT},
  publisher = {IDESCAT},
  author = {Lindgren,  F. and Bakka,  H. and Bolin,  D. and Krainski,  E. and Rue,  H.},
  year = {2024},
  month = Jun,
  pages = {3–66}
}

@BOOK{Lablee2015-pi,
  title     = "Spectral Theory in Riemannian Geometry",
  author    = "Labl{\'e}e, O.",
  publisher = "EMS Press",
  month     =  feb,
  year      =  2015,
  address   = "Zuerich, Switzerland"
}

@article{kirchner2023,
  title = {Multiple and weak Markov properties in Hilbert spaces with applications to fractional stochastic evolution equations},
  volume = {186},
  ISSN = {0304-4149},
  url = {http://dx.doi.org/10.1016/j.spa.2025.104639},
  DOI = {10.1016/j.spa.2025.104639},
  journal = {Stochastic Processes and their Applications},
  publisher = {Elsevier BV},
  author = {Kirchner, K. and Willems, J.},
  year = {2025},
  month = aug,
  pages = {104639}
}

@article{Bolin2018,
  title = {Numerical solution of fractional elliptic stochastic PDEs with spatial white noise},
  volume = {40},
  ISSN = {1464-3642},
  url = {http://dx.doi.org/10.1093/imanum/dry091},
  DOI = {10.1093/imanum/dry091},
  number = {2},
  journal = {IMA Journal of Numerical Analysis},
  publisher = {Oxford University Press (OUP)},
  author = {Bolin, D. and Kirchner, K. and Kovács, M.},
  year = {2018},
  month = dec,
  pages = {1051–1073}
}

@article{Kovcs2022,
  title = {Approximation of SPDE covariance operators by finite elements: a semigroup approach},
  volume = {43},
  ISSN = {1464-3642},
  url = {http://dx.doi.org/10.1093/imanum/drac020},
  DOI = {10.1093/imanum/drac020},
  number = {3},
  journal = {IMA Journal of Numerical Analysis},
  publisher = {Oxford University Press (OUP)},
  author = {Kovács, M. and Lang, A. and Petersson, A.},
  year = {2022},
  month = may,
  pages = {1324–1357}
}

@article{Fahim2022,
  title = {Some approximation results for mild solutions of stochastic fractional order evolution equations driven by Gaussian noise},
  volume = {11},
  ISSN = {2194-041X},
  url = {http://dx.doi.org/10.1007/s40072-022-00250-0},
  DOI = {10.1007/s40072-022-00250-0},
  number = {3},
  journal = {Stochastics and Partial Differential Equations: Analysis and Computations},
  publisher = {Springer Science and Business Media LLC},
  author = {Fahim, K. and Hausenblas,  E. and Kovács,  M.},
  year = {2022},
  month = apr,
  pages = {1044–1088}
}

@article{Lang2024,
   title={Euler–Maruyama approximations of the stochastic heat equation on the sphere},
   volume={11},
   ISSN={2158-2505},
   url={http://dx.doi.org/10.3934/jcd.2023012},
   DOI={10.3934/jcd.2023012},
   number={1},
   journal={Journal of Computational Dynamics},
   publisher={American Institute of Mathematical Sciences (AIMS)},
   author={Lang, A. and Motschan-Armen, I.},
   year={2024},
   pages={23–42} 
}

@article{Jansson2022,
  title = {Surface finite element approximation of spherical Whittle--Matérn Gaussian random fields},
  volume = {44},
  ISSN = {1095-7197},
  url = {http://dx.doi.org/10.1137/21M1400717},
  DOI = {10.1137/21m1400717},
  number = {2},
  journal = {SIAM Journal on Scientific Computing},
  publisher = {Society for Industrial & Applied Mathematics (SIAM)},
  author = {Jansson, E. and Kovács, M. and Lang, A.},
  year = {2022},
  month = apr,
  pages = {A825–A842}
}

@article{Cohen2022,
  title = {Numerical approximation and simulation of the stochastic wave equation on the sphere},
  volume = {59},
  ISSN = {1126-5434},
  url = {http://dx.doi.org/10.1007/s10092-022-00472-7},
  DOI = {10.1007/s10092-022-00472-7},
  number = {3},
  journal = {Calcolo},
  publisher = {Springer Science and Business Media LLC},
  author = {Cohen, D. and Lang, A.},
  year = {2022},
  month = aug 
}

@article{Bolin2018_2,
  title = {Weak convergence of Galerkin approximations for fractional elliptic stochastic PDEs with spatial white noise},
  volume = {58},
  ISSN = {1572-9125},
  url = {http://dx.doi.org/10.1007/s10543-018-0719-8},
  DOI = {10.1007/s10543-018-0719-8},
  number = {4},
  journal = {BIT Numerical Mathematics},
  publisher = {Springer Science and Business Media LLC},
  author = {Bolin, D. and Kirchner, K. and Kovács, M.},
  year = {2018},
  month = aug,
  pages = {881–906}
}

@article{Kovcs2009,
  title = {Strong convergence of the finite element method with truncated noise for semilinear parabolic stochastic equations with additive noise},
  volume = {53},
  ISSN = {1572-9265},
  url = {http://dx.doi.org/10.1007/s11075-009-9281-4},
  DOI = {10.1007/s11075-009-9281-4},
  number = {2–3},
  journal = {Numerical Algorithms},
  publisher = {Springer Science and Business Media LLC},
  author = {Kovács, M. and Larsson, S. and Lindgren, F.},
  year = {2009},
  month = mar,
  pages = {309–320}
}

@article{Kovcs2011,
  title = {Weak convergence of finite element approximations of linear stochastic evolution equations with additive noise},
  volume = {52},
  ISSN = {1572-9125},
  url = {http://dx.doi.org/10.1007/s10543-011-0344-2},
  DOI = {10.1007/s10543-011-0344-2},
  number = {1},
  journal = {BIT Numerical Mathematics},
  publisher = {Springer Science and Business Media LLC},
  author = {Kovács, M. and Larsson, S. and Lindgren, F.},
  year = {2011},
  month = aug,
  pages = {85–108}
}

@article{Nystrm2016,
  title = {Extension properties and boundary estimates for a fractional heat operator},
  volume = {140},
  ISSN = {0362-546X},
  url = {http://dx.doi.org/10.1016/j.na.2016.02.027},
  DOI = {10.1016/j.na.2016.02.027},
  journal = {Nonlinear Analysis},
  publisher = {Elsevier BV},
  author = {Nystr\"{o}m,  K. and Sande,  O.},
  year = {2016},
  month = jul,
  pages = {29–37}
}

@article{Stinga2017,
  title = {Regularity theory and extension problem for fractional nonlocal parabolic equations and the master equation},
  volume = {49},
  ISSN = {1095-7154},
  url = {http://dx.doi.org/10.1137/16M1104317},
  DOI = {10.1137/16m1104317},
  number = {5},
  journal = {SIAM Journal on Mathematical Analysis},
  publisher = {Society for Industrial & Applied Mathematics (SIAM)},
  author = {Stinga,  P.R. and Torrea,  J.L.},
  year = {2017},
  month = jan,
  pages = {3893–3924}
}

@inbook{Desch2011,
  title = {Semilinear stochastic integral equations in $L^p$},
  ISBN = {9783034800754},
  ISSN = {2374-0280},
  url = {http://dx.doi.org/10.1007/978-3-0348-0075-4_8},
  DOI = {10.1007/978-3-0348-0075-4_8},
  booktitle = {Parabolic Problems},
  publisher = {Springer Basel},
  author = {Desch,  W. and Londen, S.},
  year = {2011},
  pages = {131–166}
}

@article{Bonaccorsi2009,
  title = {Fractional stochastic evolution equations with Lévy noise},
  volume = {22},
  ISSN = {0893-4983},
  url = {http://dx.doi.org/10.57262/die/1356019409},
  DOI = {10.57262/die/1356019409},
  number = {11/12},
  journal = {Differential and Integral Equations},
  publisher = {Khayyam Publishing,  Inc},
  author = {Bonaccorsi,  S.},
  year = {2009},
  month = nov 
}

@article{Lindgren2022,
  title = {The SPDE approach for Gaussian and non-Gaussian fields: 10 years and still running},
  volume = {50},
  ISSN = {2211-6753},
  url = {http://dx.doi.org/10.1016/j.spasta.2022.100599},
  DOI = {10.1016/j.spasta.2022.100599},
  journal = {Spatial Statistics},
  publisher = {Elsevier BV},
  author = {Lindgren,  F. and Bolin,  D. and Rue,  H.},
  year = {2022},
  month = aug,
  pages = {100599}
}

@article{Bakka2018,
  title = {Spatial modeling with R‐INLA: A review},
  volume = {10},
  ISSN = {1939-0068},
  url = {http://dx.doi.org/10.1002/wics.1443},
  DOI = {10.1002/wics.1443},
  number = {6},
  journal = {WIREs Computational Statistics},
  publisher = {Wiley},
  author = {Bakka, H. and Rue, H. and Fuglstad, G. and Riebler, A. and Bolin,  D. and Illian,  J. and Krainski,  E. and Simpson,  D. and Lindgren,  F.},
  year = {2018},
  month = jul 
}

@book{Lord2014,
  title = {An Introduction to Computational Stochastic PDEs},
  ISBN = {9781139017329},
  url = {http://dx.doi.org/10.1017/CBO9781139017329},
  DOI = {10.1017/cbo9781139017329},
  publisher = {Cambridge University Press},
  author = {Lord,  G.J. and Powell,  C.E. and Shardlow,  T.},
  year = {2014},
  month = jun 
}

@article{lectureNoteKovacsLarsson,
author = {Kovács, M. and Larsson, S.},
year = {2010},
month = {12},
pages = {},
title = {Introduction to stochastic partial differential}
}

@article{Kirchner2017,
  title = {Covariance structure of parabolic stochastic partial differential equations with multiplicative Lévy noise},
  volume = {262},
  ISSN = {0022-0396},
  url = {http://dx.doi.org/10.1016/j.jde.2017.02.021},
  DOI = {10.1016/j.jde.2017.02.021},
  number = {12},
  journal = {Journal of Differential Equations},
  publisher = {Elsevier BV},
  author = {Kirchner,  K. and Lang,  A. and Larsson,  S.},
  year = {2017},
  month = jun,
  pages = {5896–5927}
}

@article{Willems2024,
  title = {Dirichlet problems associated to abstract nonlocal space–time differential operators},
  volume = {25},
  ISSN = {1424-3202},
  url = {http://dx.doi.org/10.1007/s00028-024-01026-w},
  DOI = {10.1007/s00028-024-01026-w},
  number = {1},
  journal = {Journal of Evolution Equations},
  publisher = {Springer Science and Business Media LLC},
  author = {Willems,  J.},
  year = {2025},
  month = jan 
}

@article{Angulo2007,
  title = {Spatiotemporal random fields associated with stochastic fractional Helmholtz and heat equations},
  volume = {22},
  ISSN = {1436-3259},
  url = {http://dx.doi.org/10.1007/s00477-007-0157-7},
  DOI = {10.1007/s00477-007-0157-7},
  number = {S1},
  journal = {Stochastic Environmental Research and Risk Assessment},
  publisher = {Springer Science and Business Media LLC},
  author = {Angulo,  J. M. and Kelbert,  M.Ya. and Leonenko,  N.N. and Ruiz-Medina,  M.D.},
  year = {2007},
  month = jun,
  pages = {3–13}
}

@article{Kelbert2005,
  title = {Fractional random fields associated with stochastic fractional heat equations},
  volume = {37},
  ISSN = {1475-6064},
  url = {http://dx.doi.org/10.1239/aap/1113402402},
  DOI = {10.1239/aap/1113402402},
  number = {1},
  journal = {Advances in Applied Probability},
  publisher = {Cambridge University Press (CUP)},
  author = {Kelbert,  M.Ya. and Leonenko,  N.N. and Ruiz-Medina,  M.D.},
  year = {2005},
  month = mar,
  pages = {108–133}
}

@article{Biswas2021,
  title = {Harnack inequalities and H\"{o}lder estimates for master equations},
  volume = {53},
  ISSN = {1095-7154},
  url = {http://dx.doi.org/10.1137/20M1339520},
  DOI = {10.1137/20m1339520},
  number = {2},
  journal = {SIAM Journal on Mathematical Analysis},
  publisher = {Society for Industrial & Applied Mathematics (SIAM)},
  author = {Biswas,  A. and De León-Contreras,  M. and Stinga,  P.R.},
  year = {2021},
  month = jan,
  pages = {2319–2348}
}

@article{Biswas2020,
  title = {Regularity estimates for nonlocal space-time master equations in bounded domains},
  volume = {21},
  ISSN = {1424-3202},
  url = {http://dx.doi.org/10.1007/s00028-020-00590-1},
  DOI = {10.1007/s00028-020-00590-1},
  number = {1},
  journal = {Journal of Evolution Equations},
  publisher = {Springer Science and Business Media LLC},
  author = {Biswas,  A. and Stinga,  P.R.},
  year = {2020},
  month = jun,
  pages = {503–565}
}

@article{Litsgrd2022,
  title = {On local regularity estimates for fractional powers of parabolic operators with time-dependent measurable coefficients},
  volume = {23},
  ISSN = {1424-3202},
  url = {http://dx.doi.org/10.1007/s00028-022-00844-0},
  DOI = {10.1007/s00028-022-00844-0},
  number = {1},
  journal = {Journal of Evolution Equations},
  publisher = {Springer Science and Business Media LLC},
  author = {Litsgård, M. and Nystr\"{o}m,  K.},
  year = {2022},
  month = dec 
}

@BOOK{Cressie2011-it,
  title     = "Statistics for {Spatio-Temporal} Data",
  author    = "Cressie, N. and Wikle, C.K.",
  publisher = "Wiley-Blackwell",
  month     =  mar,
  year      =  2011,
  address   = "Chichester, England",
  language  = "en"
}

@article{Yan2005,
  title = {Galerkin finite element methods for stochastic parabolic partial differential equations},
  volume = {43},
  ISSN = {1095-7170},
  url = {http://dx.doi.org/10.1137/040605278},
  DOI = {10.1137/040605278},
  number = {4},
  journal = {SIAM Journal on Numerical Analysis},
  publisher = {Society for Industrial & Applied Mathematics (SIAM)},
  author = {Yan,  Y.},
  year = {2005},
  month = jan,
  pages = {1363–1384}
}

@article{Andersson2015,
  title = {Weak convergence for a spatial approximation of the nonlinear stochastic heat equation},
  volume = {85},
  ISSN = {1088-6842},
  url = {http://dx.doi.org/10.1090/mcom/3016},
  DOI = {10.1090/mcom/3016},
  number = {299},
  journal = {Mathematics of Computation},
  publisher = {American Mathematical Society (AMS)},
  author = {Andersson,  A. and Larsson,  S.},
  year = {2015},
  month = aug,
  pages = {1335–1358}
}

@article{Bolin2019,
  title = {The rational SPDE approach for Gaussian random fields with general smoothness},
  volume = {29},
  ISSN = {1537-2715},
  url = {http://dx.doi.org/10.1080/10618600.2019.1665537},
  DOI = {10.1080/10618600.2019.1665537},
  number = {2},
  journal = {Journal of Computational and Graphical Statistics},
  publisher = {Informa UK Limited},
  author = {Bolin,  D. and Kirchner,  K.},
  year = {2019},
  month = oct,
  pages = {274–285}
}

@article{Fuglstad2014,
  title = {Exploring a New Class of Non-stationary Spatial Gaussian Random Fields with Varying Local Anisotropy},
  ISSN = {1017-0405},
  url = {http://dx.doi.org/10.5705/ss.2013.106w},
  DOI = {10.5705/ss.2013.106w},
  journal = {Statistica Sinica},
  publisher = {Statistica Sinica (Institute of Statistical Science)},
  author = {Fuglstad,  G. and Lindgren,  F. and Simpson,  D. and Rue,  H.},
  year = {2014}
}

@article{Lang2015,
  title = {Isotropic Gaussian random fields on the sphere: Regularity,  fast simulation and stochastic partial differential equations},
  volume = {25},
  ISSN = {1050-5164},
  url = {http://dx.doi.org/10.1214/14-AAP1067},
  DOI = {10.1214/14-aap1067},
  number = {6},
  journal = {The Annals of Applied Probability},
  publisher = {Institute of Mathematical Statistics},
  author = {Lang,  A. and Schwab,  C.},
  year = {2015},
  month = dec 
}

@book{Jentzen2011,
  title = {Taylor Approximations for Stochastic Partial Differential Equations},
  ISBN = {9781611972016},
  url = {http://dx.doi.org/10.1137/1.9781611972016},
  DOI = {10.1137/1.9781611972016},
  publisher = {Society for Industrial and Applied Mathematics},
  author = {Jentzen,  A. and Kloeden,  P.E.},
  year = {2011},
  month = jan 
}

@article{Bonito2019,
  title = {On sinc quadrature approximations of fractional powers of regularly accretive operators},
  volume = {27},
  ISSN = {1569-3953},
  url = {http://dx.doi.org/10.1515/jnma-2017-0116},
  DOI = {10.1515/jnma-2017-0116},
  number = {2},
  journal = {Journal of Numerical Mathematics},
  publisher = {Walter de Gruyter GmbH},
  author = {Bonito,  A. and Lei,  W. and Pasciak,  J.E.},
  year = {2019},
  month = jun,
  pages = {57–68}
}

@article{Kovcs2020,
  title = {Mittag--Leffler Euler Integrator for a Stochastic Fractional Order Equation with Additive Noise},
  volume = {58},
  ISSN = {1095-7170},
  url = {http://dx.doi.org/10.1137/18M1177895},
  DOI = {10.1137/18m1177895},
  number = {1},
  journal = {SIAM Journal on Numerical Analysis},
  publisher = {Society for Industrial & Applied Mathematics (SIAM)},
  author = {Kovács,  M. and Larsson,  S. and Saedpanah,  F.},
  year = {2020},
  month = jan,
  pages = {66–85}
}

@article{Brhier2013,
  title = {Approximation of the Invariant Measure with an Euler Scheme for Stochastic PDEs Driven by Space-Time White Noise},
  volume = {40},
  ISSN = {1572-929X},
  url = {http://dx.doi.org/10.1007/s11118-013-9338-9},
  DOI = {10.1007/s11118-013-9338-9},
  number = {1},
  journal = {Potential Analysis},
  publisher = {Springer Science and Business Media LLC},
  author = {Bréhier,  C.},
  year = {2013},
  month = mar,
  pages = {1–40}
}

@misc{Brhier2022,
  doi = {10.48550/ARXIV.2207.09266},
  url = {https://arxiv.org/abs/2207.09266},
  author = {Bréhier,  C. and Cui,  J. and Wang,  X.},
  keywords = {Numerical Analysis (math.NA),  Probability (math.PR),  FOS: Mathematics,  FOS: Mathematics},
  title = {Weak error estimates of fully-discrete schemes for the stochastic Cahn-Hilliard equation},
  publisher = {arXiv},
  year = {2022},
  copyright = {arXiv.org perpetual,  non-exclusive license}
}

@article{Prohl2021,
  title = {Strong error estimates for a space-time discretization of the linear-quadratic control problem with the stochastic heat equation with linear noise},
  volume = {42},
  ISSN = {1464-3642},
  url = {http://dx.doi.org/10.1093/imanum/drab069},
  DOI = {10.1093/imanum/drab069},
  number = {4},
  journal = {IMA Journal of Numerical Analysis},
  publisher = {Oxford University Press (OUP)},
  author = {Prohl,  A. and Wang,  Y.},
  year = {2021},
  month = sep,
  pages = {3386–3429}
}

@article{Breit2024,
  title = {Weak error analysis for the stochastic Allen–Cahn equation},
  ISSN = {2194-041X},
  url = {http://dx.doi.org/10.1007/s40072-024-00326-z},
  DOI = {10.1007/s40072-024-00326-z},
  journal = {Stochastics and Partial Differential Equations: Analysis and Computations},
  publisher = {Springer Science and Business Media LLC},
  author = {Breit, D. and Prohl, A.},
  year = {2024},
  month = feb 
}

@article{Baas2022,
  title = {Sharp Interface Limit of Stochastic Cahn-Hilliard Equation with Singular Noise},
  volume = {59},
  ISSN = {1572-929X},
  url = {http://dx.doi.org/10.1007/s11118-021-09976-3},
  DOI = {10.1007/s11118-021-09976-3},
  number = {2},
  journal = {Potential Analysis},
  publisher = {Springer Science and Business Media LLC},
  author = {Baňas,  L. and Yang,  H. and Zhu,  R.},
  year = {2022},
  month = jan,
  pages = {497–518}
}

@article{Debussche1999,
  title = {Numerical simulation of the stochastic Korteweg–de Vries equation},
  volume = {134},
  ISSN = {0167-2789},
  url = {http://dx.doi.org/10.1016/S0167-2789(99)00072-X},
  DOI = {10.1016/s0167-2789(99)00072-x},
  number = {2},
  journal = {Physica D: Nonlinear Phenomena},
  publisher = {Elsevier BV},
  author = {Debussche,  A. and Printems,  J.},
  year = {1999},
  month = oct,
  pages = {200–226}
}

@article{Debussche2011,
  title = {Diffusion limit for a stochastic kinetic problem},
  volume = {11},
  ISSN = {1534-0392},
  url = {http://dx.doi.org/10.3934/cpaa.2012.11.2305},
  DOI = {10.3934/cpaa.2012.11.2305},
  number = {6},
  journal = {Communications on Pure and Applied Analysis},
  publisher = {American Institute of Mathematical Sciences (AIMS)},
  author = {Debussche,  A. and Vovelle,  J.},
  year = {2011},
  month = apr,
  pages = {2305–2326}
}

\end{document}